\declaretheorem[name=Theorem,numberwithin=section]{thm}
\declaretheorem[name=Remark,style=remark,sibling=thm]{rem}
\declaretheorem[name=Lemma,sibling=thm]{lemma}
\declaretheorem[name=Proposition,sibling=thm]{prop}
\declaretheorem[name=Corollary,sibling=thm]{cor}
\declaretheorem[name=Assumption,style=definition,sibling=thm]{assum}
\numberwithin{equation}{section}
\crefname{lemma}{Lemma}{Lemmata}
\crefname{prop}{Proposition}{Propositions}
\crefname{thm}{Theorem}{Theorems}
\crefname{cor}{Corollary}{Corollaries}
\crefname{defn}{Definition}{Definitions}
\crefname{example}{Example}{Examples}
\crefname{rem}{Remark}{Remarks}
\crefname{assum}{Assumption}{Assumptions}
\crefname{notation}{Notation}{Notation}
\newcommand{\ti}{\tilde}
\newcommand{\cn}{\colon}
\newcommand{\sub}{\subset}
\newcommand{\ov}{\overline}
\newcommand{\R}{\mathbb{R}}
\newcommand{\bbS}{\mathbb{S}}
\newcommand{\8}{\infty}
\newcommand{\al}{\alpha}
\newcommand{\be}{\beta}
\newcommand{\g}{\gamma}
\newcommand{\de}{\delta}
\newcommand{\e}{\epsilon}
\newcommand{\ka}{\kappa}
\newcommand{\la}{\lambda}
\newcommand{\s}{\sigma}
\newcommand{\Si}{\Sigma}
\newcommand{\p}{\varphi}
\newcommand{\Om}{\Omega}
\newcommand{\D}{\Delta}
\newcommand{\G}{\Gamma}
\newcommand{\cL}{\mathcal{L}}
\newcommand{\cF}{\mathcal{F}}
\newcommand{\del}{\partial}
\newcommand{\n}{\nabla}
\newcommand{\fa}{\forall}
\newcommand{\rt}{\sqrt}
\newcommand{\ip}[2]{\left\langle #1,#2 \right\rangle}
\newcommand{\fr}[2]{\frac{#1}{#2}}
\newcommand{\x}{\times}
\DeclareMathOperator{\Rm}{Rm}
\DeclareMathOperator{\Rc}{Rc}
\DeclareMathOperator{\grad}{grad}
\DeclareMathOperator{\argmax}{argmax}
\newcommand{\pf}[1]{\begin{proof} #1 \end{proof}}
\newcommand{\eq}[1]{\begin{equation}\begin{alignedat}{2} #1 \end{alignedat}\end{equation}}
\newcommand{\br}[1]{\left(#1\right)}
\newcommand{\ra}{\rightarrow}
\newcommand{\hra}{\hookrightarrow}
\newcommand{\mrm}{\mathrm}
\newcommand{\hp}{\hphantom}
\newcommand{\q}{\quad}
\begin{document}
\title[Constrained inverse curvature flows]{Locally constrained inverse curvature flows}
\begin{abstract}
We consider inverse curvature flows in warped product manifolds, which are constrained subject to local terms of lower order, namely the radial coordinate and the generalized support function. Under various assumptions we prove longtime existence and smooth convergence to a coordinate slice. We apply this result to deduce a new Minkowski type inequality in the anti-de-Sitter Schwarzschild manifolds and a weighted isoperimetric type inequality in the hyperbolic space.
\end{abstract}
\author[J. Scheuer]{Julian Scheuer}
\address{Albert-Ludwigs-Universit\"{a}t,
Mathematisches Institut, Abteilung Reine Mathematik, Eckerstr. 1, 79104
Freiburg, Germany}
\email{julian.scheuer@math.uni-freiburg.de}
\author[C. Xia]{Chao Xia}
\address{School of Mathematical Sciences, Xiamen University, 361005,
Xiamen, P.R. China}
\email{chaoxia@xmu.edu.cn}
\date{\today}
\keywords{Inverse curvature flow, Constrained curvature flow, Geometric inequality, warped product space}
\subjclass[2010]{53C21, 53C24, 53C44}
\maketitle
\tableofcontents

\section{Introduction}

In this paper we deduce convergence results for hypersurface flows in $(n+1)$-dimensional warped product spaces 
\eq{N^{n+1}=(a,b)\x \bbS^n.}
The metric on $N$ is supposed to have the form 
\eq{\bar{g}=dr^2+\la^2(r)\s,}
where $\la$ is a positive warping factor and $\s$ is the round metric on $\bbS^n$.
Precisely, let $M=M^n$ be a closed, connected and orientable smooth manifold, then for a family of embeddings
\eq{x\cn [0,T^*)\x M\ra N,}
which satisfy the flow equation
\eq{\label{flow}\dot{x}&=\br{\fr{n}{F}-\fr{u}{\la'(r)}}\nu\\
				x(0,\cdot)&=x_0,}
we will prove long time existence and smooth convergence to a slice $\{r=\mrm{const}\}$. $F$ is a function of the principal curvatures satisfying several natural properties to be specified later,
$u$ is the {\it{support function}}
\eq{\label{supp-func}u=\bar g(\la(r)\del_r, \nu)}
and $x_0$ is an initial embedding of $M$, the image of which is a graph over $\bbS^n$,
\eq{M_0=x_0(M)=\{(r_0(y),y)\cn y\in \bbS^n\}.}

Before we state the main results in detail, cf. \cref{Sphereflow-main}, \cref{General-flow-main} and \cref{Geom-ineq}, let us give a brief overview over recent related work and our motivation to consider this flow.

Curvature driven hypersurface flows have attracted a lot of attention for about the last four decades, starting with the mean curvature flow of convex hypersurfaces, \cite{Brakke:/1978, Huisken:/1984, Huisken:/1986}, and several fully nonlinear (1-homogeneous) analogues involving the scalar curvature, the Gaussian curvature and more general functions of the principal curvatures, \cite{Andrews:/1994b, Andrews:/1994a, Chow:/1985, Chow:/1987}.
Beside these contracting flows also expanding flows for star-shaped hypersurfaces have been considered, \cite{Gerhardt:/1990,  Gerhardt:11/2011, Gerhardt:01/2014, Gerhardt:/2015, Scheuer:05/2015, Scheuer:01/2017, Urbas:/1990}. The most prominent example of an expanding flow is the inverse mean curvature flow, a weak notion of which was used by Huisken and Ilmanen to prove the Riemannian Penrose inequality, \cite{HuiskenIlmanen:/2001}. Various other applications of contracting and expanding flows include a classification of $2$-convex $n$-dimensional hypersurfaces using the mean curvature flow with surgery, due to Huisken and Sinestrari for $n\geq 3$, \cite{HuiskenSinestrari:/2009}, various extensions of geometric inequalities of Alexandrov-Fenchel-type to non-convex hypersurfaces, \cite{BrendleHungWang:01/2016}, \cite{GuanLi:08/2009}, new Alexandrov-Fenchel-type inequalities in the hyperbolic space \cite{GeWangWu:04/2014, WangXia:07/2014, WeiXiong:/2015} and in the sphere \cite{De-LimaGirao:04/2016, GiraoPinheiro:09/2016, MakowskiScheuer:11/2016, WeiXiong:/2015}.

These contracting and expanding flows all have the property of some sort of singularity formation, where however, in the optimal case, the singularities in the expanding case are quite easy to deal with and only manifest themselves in a uniform convergence to infinity or to a minimal hypersurface, if present. Still it seems tempting to directly define a flow which prevents this singularity formation, for example by adding a constraining term. The first example of such flows is the volume preserving mean curvature flow which has the form
\eq{\label{VPMCF}\dot{x}=\br{\fr{1}{|M_{t}|}\int_{M_{t}}H-H}\nu.}
It has the nice property that additionally to keeping the enclosed volume fixed it also decreases the surface area, making it a natural candidate to prove the isoperimetric inequality, once one can show that it drives hypersurfaces to round spheres. In \cite{Huisken:/1987} this was accomplished for strictly convex hypersurfaces of the Euclidean space. Similar flows, which preserve higher order curvature integrals, where considered for example in \cite{McCoy:/2003, McCoy:/2005} and in \cite{Cabezas-RivasMiquel:/2007} for flows in the hyperbolic space. Note however that the global term involved in this equation adds such heavy complications, that these nonlocal flows until know only allowed a quite restricted class of hypersurfaces, namely convex ones in the Euclidean space and horo-convex\footnote{A hypersurface in the hyperbolic space is called horo-convex if all its principal curvatures are greater or equal than $1$.} ones in the hyperbolic space. Beside some perturbation results, in the sphere there are even no results at all, \cite{AlikakosFreire:/2003}.

However, using the Minkowski identity in $\R^{n+1}$,
\eq{\int_{M}H\left<x,\nu\right>=n|M|,}
it is possible to define a constrained flow, which involves no global term and still preserves enclosed volume while decreasing the surface area. In the Euclidean space it reads
\eq{\label{GuanLi-flow}\dot{x}=(n-H\left<x,\nu\right>)\nu} 
and in warped products as above with warping factor $\la(r)$ it has to be
\eq{\label{GuanLiWang-flow}\dot{x}=(n\la'(r)-Hu)\nu,}
where $u$ is defined as \eqref{supp-func}.
This beautiful flow was invented by Guan and Li in \cite{GuanLi:/2015}, where they proved longtime existence and smooth convergence to a round sphere when the ambient space is a space form. Together with Mu-Tao Wang they generalized this result to a broader class of ambient warped products with mild assumptions on $\la$ in \cite{GuanLiWang:09/2016}. The major advantage compared to the classical volume preserving mean curvature flow \eqref{VPMCF} is that the $C^{0}$-estimates a.k.a. barriers are for free due to the maximum principle. Hence only the starshapedness of the initial hypersurface is required, namely that it is a graph in the warped product $(a,b)\x\bbS^{n}$ over the base $\bbS^{n}$. This result allows to deduce an isoperimetric inequality for such graphs in quite general warped products. See also \cite{GuanLi:/2017} for a fully nonlinear extension of this flow.

On the other hand, Brendle, Guan and Li \cite{BrendleGuanLi:/} designed an inverse type constrained curvature flow in space forms,
\eq{\label{GuanLi3}
\dot{x}=\left(\frac{n\la'}{F}-u\right)\nu.
}
Compared to the mean curvature type constrained flow \eqref{GuanLiWang-flow}, this flow seems more appropriate for higher order isoperimetric type inequalities -- the Alexandrov-Fenchel type inequalities for quermassintegrals -- in space forms, for the reason that the higher order Minkowski identities imply that for 
\eq{F=n\fr{H_{k}}{H_{k-1}}}
the $k$-th quermassintegral is preserved, while the $(k+1)$-th quermassintegral is decreasing. However, the study of \eqref{GuanLi3} is quite subtle from the PDE point of view and until today no satisfactory complete result has been achieved. Some convergence results are proved in \cite{BrendleGuanLi:/} when the initial hypersurface is already close to a sphere. A full convergence result for closed, starshaped and $k$-convex initial hypersurfaces would prove the quermass Alexandrov-Fenchel inequalities for such hypersurfaces.
For horo-convex domains these have been established by Wang and the second author \cite{WangXia:07/2014} using a global quermassintegral preserving curvature flow.

Guan-Li's considerations motivate us to study another kind of constrained flow, the constrained inverse curvature flow \eqref{flow} in general warped product spaces. Compared to \eqref{GuanLi3}, we are able to  prove the longtime existence and smooth convergence of \eqref{flow} to a coordinate slice under mild assumptions on the curvature function $F$, the warping factor $\la$ and the initial hypersurface.
We use this result to deduce a new geometric inequality in the anti-de-Sitter Schwarzschild manifolds, cf. \cref{Geom-ineq}, on which we will give more comments later.

Let us first state the main results of this paper. Since our assumptions on the curvature function and the initial embedding  depend on the structure of the warping factor $\la$, we split our flow results into two theorems. We start with the ambient space $N=\bbS_+^{n+1}$, in which case $\la(r)=\sin r, r\in [0,\frac{\pi}{2})$.

\begin{thm}\label{Sphereflow-main}
Let $x_0(M)$ be the embedding of a closed $n$-dimensional manifold $M$ into $\bbS^{n+1}$, such that $x_0(M)$ is strictly convex. 
Let 
\eq{F=n\fr{H_k}{H_{k-1}},}
where $H_{k}$ is the $k$-th normalized elementary symmetric polynomial of the principal curvatures.
Then any solution $x$ of \eqref{flow} exists for all positive times and converges to a geodesic slice in the $C^{\8}$-topology. 
\end{thm}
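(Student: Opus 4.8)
The plan is to run the standard parabolic machinery for star-shaped hypersurface flows: reduce \eqref{flow} to a scalar parabolic PDE for the radial graph function, establish a priori estimates in the order $C^0\to C^1\to C^2\to C^\infty$, conclude longtime existence, and finally extract convergence to a slice via a monotone quantity. First I would show that the star-shapedness is preserved and that the flow stays a graph $r=\rho(t,y)$ over $\bbS^n$ for all time; writing $\dot x = (\tfrac nF - \tfrac{u}{\la'})\nu$ in graph coordinates yields a scalar parabolic equation $\dot\rho = \text{(operator in }\rho)$, and the zeroth-order structure is designed so that the maximum principle gives the $C^0$-bound for free: the warped-product monotonicity of $\la$ and the algebraic form of the speed force $\min\rho(t,\cdot)$ to be nondecreasing and $\max\rho(t,\cdot)$ nonincreasing (at least once one checks the sign of the driving term $\tfrac{n\la'}{F}-u$ on the largest/smallest slice touching the surface, using $F(\kappa_i=\la'/\la \text{ on a slice})=n\la'/\la$). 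This is exactly the advantage Guan–Li exploited, and it removes the barrier problem entirely.

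Next comes the gradient estimate: one computes the evolution of $v=\la/u$ (equivalently the evolution of $|\bar\nabla\rho|^2$), and shows via the maximum principle that $v$ stays bounded, using the already-established $C^0$-bounds and strict convexity (which on $\bbS^{n+1}$ also controls the sign of $\la'$ since $r<\pi/2$). With $C^0$ and $C^1$ in hand, the surface remains uniformly star-shaped, hence $F>0$ and the equation is uniformly parabolic on any finite time interval; the $C^2$-estimate — bounding the principal curvatures from above and below — is obtained by the usual tensor maximum principle applied to the second fundamental form, where the crucial point is that the lower-order constraint term $-u/\la'$ contributes favorably (or at worst controllably) to the evolution of the largest principal curvature, and convexity is preserved so that $F$ stays in the interior of its cone where it is concave (for $F=nH_k/H_{k-1}$). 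Preservation of convexity along the flow is, I expect, the \emph{main obstacle}: for inverse-type speeds the reaction terms in the Weingarten evolution equation do not have an obviously good sign, and one must combine the concavity of $H_k/H_{k-1}$ on the positive cone with the specific structure of the constraint and the ambient curvature terms of $\bbS^{n+1}$ to rule out a principal curvature hitting zero; this likely uses the Gauss equation and a delicate choice of test quantity. Once uniform $C^2$-bounds hold, Krylov–Safonov and Schauder bootstrapping give uniform $C^\infty$-estimates and longtime existence.

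Finally, for convergence I would exhibit a monotone geometric quantity and show it forces the solution to a slice. The natural candidate is the ratio $\max\rho - \min\rho$, or better, a monotonicity coming from the design of the flow: the constrained inverse flow \eqref{flow} should preserve one curvature integral (the $H_{k-1}$-quermassintegral) while monotonically changing another, and combined with the uniform estimates and an interpolation/Poincaré argument this yields exponential decay of the oscillation of $\rho$ and of the traceless second fundamental form. Concretely, one shows $\int_{M_t}$ of an appropriate weighted quantity has a definite sign derivative that vanishes only on slices, then upgrades $C^0$-convergence to $C^\infty$-convergence by interpolating against the uniform higher-order bounds; a standard argument (as in Gerhardt's and Guan–Li's work) then identifies the limit slice and promotes convergence to the smooth topology, possibly with exponential rate after linearizing at the limiting slice.
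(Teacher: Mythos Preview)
Your overall architecture ($C^0\to C^1\to C^2\to C^\infty$, then a monotone quantity for convergence) matches the paper, and your $C^0$ and $C^1$ steps are essentially what the paper does. There are, however, two places where your proposal contains a genuine gap rather than just imprecision.

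\textbf{Convexity preservation.} You correctly flag this as the main obstacle, but the tool you name --- concavity of $H_k/H_{k-1}$ --- is the wrong one. Concavity controls the sign of $F^{kl,rs}h_{kl;i}h_{rs;j}$ from \emph{above}, which is what one uses to bound the \emph{largest} curvature. To keep the \emph{smallest} eigenvalue positive you need the opposite inequality, and the paper obtains it by working with the inverse $b=(h^{-1})$ of the Weingarten map and exploiting \emph{inverse concavity} of $F$ (i.e.\ concavity of $\kappa\mapsto F(\kappa_i^{-1})^{-1}$), so that the combination $\tfrac{2}{F}F^{rs}F^{pq}-2F^{qs}b^{pr}-F^{pq,rs}$ applied to $\nabla h$ is nonpositive. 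Without this switch the reaction terms in the evolution of $\kappa_{\min}$ do not close up, and ``concavity plus structure of the constraint'' will not rescue the argument.

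\textbf{Convergence.} The flow \eqref{flow} does \emph{not} preserve any quermassintegral: that is the Brendle--Guan--Li flow $\dot x=(n\la'/F-u)\nu$, not this one. So your proposed monotone quantity does not exist here, and the oscillation $\max r-\min r$, while nonincreasing, is not obviously forced to zero. The paper instead shows that the weighted enclosed volume $V(t)=\int_{\Omega_t}\la'\,dN$ is nondecreasing, via
\[
\dot V=\int_{M_t}\Bigl(\tfrac{n\la'}{F}-u\Bigr)\geq \int_{M_t}\Bigl(\tfrac{n\la'}{H}-u\Bigr)\geq 0,
\]
where the last inequality is Brendle's Heintze--Karcher inequality in $\bbS^{n+1}$. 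Integrability of $\dot V$ then forces the Heintze--Karcher deficit to vanish along a subsequence, and the rigidity case of that inequality identifies the limit as a slice. This external input (Heintze--Karcher and its equality case) is the key idea for convergence and is absent from your plan.
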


Now we come to ambient spaces satisfying $\la''\geq 0$. We obtain convergence results for a large class of speeds and therefore make the following assumption. 

\begin{assum}\label{F}
Let $\G\sub\R^{n}$ be a symmetric, convex, open cone containing 
\eq{\G_+=\{(\ka_i)\in \R^n\cn \ka_i>0\}}
and suppose that $F$ is positive in $\G$, strictly monotone, homogeneous of degree one and concave with
\eq{F_{|\del\G}=0,\q F(1,\dots,1)=n.} 
\end{assum} 

\begin{thm}\label{General-flow-main}
Let $a,b\in\R$ and $(N,\bar g)$ be the warped space $((a,b)\x \bbS^n,dr^2+\la^2(r)\s)$ with $\la>0$, $\la'>0$ and $\la''\geq 0$.
Let $F\in C^{\8}(\G)$ satisfy \cref{F} and let $x_0(M)$ be the embedding of a closed $n$-dimensional manifold $M$ into $N$, such that $x_0(M)$ is a graph over the domain $\bbS^n$ and such that $\ka\in \G$ for all $n$-tuples of principal curvatures along $x_0(M)$. Then any solution $x$ of \eqref{flow} exists for all positive times and converges to a geodesic slice in the $C^{\8}$-topology.
 \end{thm}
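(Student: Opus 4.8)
The plan is to run the standard parabolic machinery for star-shaped inverse-type flows, adapted to the constrained speed $\tfrac{n}{F}-\tfrac{u}{\la'}$. Since the initial hypersurface $x_0(M)$ is a graph over $\bbS^n$, write the evolving hypersurface as a graph $r=r(t,y)$ and observe that the flow equation \eqref{flow} becomes a scalar parabolic PDE for $r$. The first step is the \emph{$C^0$-estimate}: I expect the maximum principle applied to $\max r(t,\cdot)$ and $\min r(t,\cdot)$ to give two-sided a priori bounds $a<r_1\le r(t,\cdot)\le r_2<b$ preserved for all time, exactly as in Guan--Li--Wang's treatment of \eqref{GuanLiWang-flow}; here the sign structure of the constraining term $-u/\la'$ together with $\la'>0$, $\la''\ge0$ makes the barriers the coordinate slices where $n/F = u/\la'$ evaluated on a slice, i.e. where $n/F(\text{slice curvatures}) = \la/\la'$. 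Crucially, since there is no global term, these barriers come for free from the maximum principle, which is precisely the advantage emphasised in the introduction.

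The second step is the \emph{gradient estimate}, i.e. a bound on $|\bar\nabla r|$ (equivalently a bound $u\ge c>0$ away from zero, keeping the graph star-shaped and $F$ away from $\del\G$ in the radial directions). The plan is to compute the evolution equation of a suitable quantity such as $v=\la/u$ or $\log v - $ (a function of $r$), and show by the maximum principle that its spatial maximum cannot increase; the hypotheses $\la'>0$, $\la''\ge0$ and the concavity/homogeneity of $F$ from \cref{F} are exactly what is needed to control the bad terms. Combined with the $C^0$-bound this confines the principal curvatures to a compact subset of $\G$ in the relevant sense and shows the flow stays uniformly parabolic. The third step is the \emph{curvature estimate}: bound $F$ from above and below, and then the full second fundamental form. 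Upper and lower bounds on $F$ follow from the maximum principle applied to the evolution of $F$ (or $F/\la'$ type quantities), using that on a slice the speed vanishes so the stationary slice acts as a barrier for $F$ as well; the lower bound $F\ge c>0$ keeps the operator uniformly parabolic, and an upper bound on $|A|$ then follows either from a direct maximum-principle argument on $|A|^2/(\text{something})$ exploiting concavity of $F$, or by a Tychonov-type argument once $r$ is controlled in $C^1$ and $F$ is pinched.

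With uniform $C^0$, $C^1$ and $C^2$ bounds and uniform parabolicity in hand, the fourth step is \emph{higher regularity}: apply Krylov--Safonov and the concavity of $F$ (Evans--Krylov) to get a uniform $C^{2,\al}$ estimate for $r$, then bootstrap via Schauder theory to uniform $C^\infty$ estimates, which yields longtime existence $T^*=\infty$ (the solution cannot leave the compact region of the jet space). The final step is \emph{convergence}: since the flow is constructed to monotonically adjust $r$ toward the slice where the speed vanishes, I would identify a monotone quantity — for instance show $\osc_{\bbS^n} r(t,\cdot)$ is non-increasing and in fact decays, or that the radial velocity has a sign forcing $r_{\min}$ and $r_{\max}$ to squeeze together — and combine this with the uniform smooth bounds to extract subsequential smooth convergence to a slice; uniqueness of the limiting slice (fixed by the conserved quantity, e.g. the enclosed volume or the appropriate quermassintegral) then upgrades this to full $C^\infty$ convergence, possibly with exponential rate via linearisation at the slice. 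I expect the \textbf{main obstacle} to be the curvature (second-order) estimate: controlling $|A|$ for a general concave $F$ satisfying only \cref{F}, with the extra lower-order term $-u/\la'$ contributing terms involving $\bar\nabla u$ and ambient curvature, is the delicate point, and it is here that the structural assumptions $\la''\ge0$ and the concavity of $F$ must be used most carefully.
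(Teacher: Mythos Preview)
Your architecture ($C^0$ barriers, gradient estimate, curvature bounds, Krylov--Safonov/Schauder, convergence via a monotone quantity) matches the paper's, and you correctly anticipate that the $|A|$-bound is the delicate step. However, two of your mechanisms are not the ones that actually work, and your stated alternatives would stall.

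For the lower bound $F\ge c>0$, the paper does \emph{not} use the evolution of $F$: equation \eqref{EV-F} is too messy to close for a general concave $F$. Instead one differentiates the scalar graph equation \eqref{gamma0} in $t$ and applies the maximum principle to $\partial_t\varphi$ directly. From \eqref{G-phi} one has $G^{\varphi}=-\tfrac{nv^2\la''\la}{F^2}F^i_i+\tfrac{\la''\la}{\la'^2}$, and since $F^i_i\ge n$ by concavity and $\la''\ge 0$, the zero-order coefficient produces a dominating nonpositive cubic term in $\partial_t\varphi$, giving an upper bound on $\partial_t\varphi=nv^2/F-1/\la'$ and hence $F\ge c$. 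This trick is the sole place $\la''\ge 0$ enters the a priori estimates (cf. the remark after the theorem). No separate upper bound on $F$ is proved in the general case; one goes straight to $\|A\|^2$ via a test function $\log\kappa_n+f(u)+\alpha r$ with the standard two-case split on the sign of $\kappa_1+\epsilon\kappa_n$.

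For convergence, there is \emph{no conserved quantity} along this flow---neither volume nor any quermassintegral is preserved---so your uniqueness mechanism fails. The paper's monotone quantity is the area: since $F\le H$ by concavity, $\tfrac{d}{dt}|M_t|\ge\int_{M_t}(n-Hu/\la')=\int_{M_t}\tfrac{\la''\la}{\la'^2}\|\nabla r\|^2\ge 0$, where the middle equality uses $\mathrm{div}(\la\nabla r)=n\la'-Hu$ and integration by parts. Boundedness of area forces $\int\la''\|\nabla r\|^2\to 0$, so $\la''(\xi_t)\to 0$ at the max point $\xi_t$ of $|\hat\nabla\varphi|^2$; plugging this back into the gradient evolution \eqref{C1eq3} gives $\cL|\hat\nabla\varphi|^2\le -\epsilon|\hat\nabla\varphi|^2$ there, hence exponential decay. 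The limiting slice is then pinned down by the barriers $\inf r_0\le r\le\sup r_0$, not by a conserved integral.
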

 
\begin{rem}The assumption $\la''\geq 0$ is only used for deriving the uniform lower bound for $F$. This assumption resembles the non-positivity of the ambient sectional curvature in the radial direction, a property which was also crucial in the deduction of long-time existence of the inverse mean curvature flow in warped product spaces, cf. \cite{Scheuer:01/2017}.
 \end{rem}
 Note that compared to the purely expanding inverse mean curvature flow
 \eq{\label{ICF}\dot{x}=\fr{1}{H}\nu,}
 which was treated in general warped products in \cite{Scheuer:01/2017}, the set of assumptions on the warping factor in \cref{General-flow-main} is quite small. In order to obtain convergence results of a purely expanding flow, ones needs a lot of more global information about the ambient space. From the viewpoint of geometric inequalities for hypersurfaces, only local information is required and hence a constrained flow seems to be more promising than a flow of the form \eqref{ICF}. Indeed, in this paper, we use \cref{General-flow-main} to obtain the following geometric inequalities, one weighted Min\-kowski-type inequality and one weighted isoperimetric type inequality.
 
 \begin{thm}\label{Geom-ineq}
 Let $N=(a,b)\x \bbS^{n}$ be equipped with one of the anti-de-Sitter Schwarzschild metrics or the hyperbolic metric, i.e.
 \eq{\la'=\rt{1+\la^{2}-m\la^{1-n}	}, \q m\geq 0.}
Let $\Si\sub N$ be a closed, star-shaped and mean-convex hypersurface, given by the function $r\cn \bbS^{n}\ra (a,b)$, and let \eq{\Om=\{(s,y)\in N\cn a\leq s\leq r(y),\q y\in \bbS^{n} \}.} Then there hold
\eq{\label{Geom-ineq-1}\int_{\Si}H\la'd\mu-2n\int_{\Om}\fr{\la'\la''}{\la}dN\geq \xi_{1}(|\Si|)}
and 
\eq{\label{Geom-ineq-2}\int_{\Si}H\la'd\mu-2n\int_{\Om}\fr{\la'\la''}{\la}dN\geq \xi_{0}\br{\int_{\Om}\la'dN},}
where $\xi_{0}$, $\xi_{1}$ are the associated monotonically increasing functions for radial coordinate slices. Equality holds if and only if $\Si$ is a radial coordinate slice. 
 \end{thm}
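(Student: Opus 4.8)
The plan is to run the constrained inverse curvature flow \eqref{flow} with the curvature function $F=H$ (so $F_1 = H$ in the normalized notation, which satisfies \cref{F} on $\G_+$ as a special case) and the warping factor $\la' = \sqrt{1+\la^2-m\la^{1-n}}$, which indeed satisfies $\la>0$, $\la'>0$ and $\la''\geq 0$ for $m\geq 0$ in the relevant range, so that \cref{General-flow-main} applies. Starting from the given star-shaped mean-convex hypersurface $\Si$ (which is a graph over $\bbS^n$, and whose principal curvatures lie in $\G_+\subset\G$ after possibly noting that mean-convexity together with star-shapedness suffices once we take $F=H$, i.e. $k=1$), we obtain a solution $x\cn[0,\8)\x M\to N$ converging smoothly to a geodesic slice $\{r=r_\8\}$. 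The strategy is then the classical one: identify two quantities along the flow, show one is monotone and one is constant, pass to the limit, and read off the inequality together with its rigidity.

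\textbf{Step 1: the monotone and conserved quantities.} First I would compute the evolution of the area $|\Si_t|$ under \eqref{flow}. Using $\partial_t d\mu = H\br{\fr{n}{F}-\fr{u}{\la'}}d\mu$ and $F=H$, this gives $\fr{d}{dt}|\Si_t| = \int_{\Si_t}\br{n-\fr{Hu}{\la'}}d\mu$. Next I would compute the evolution of the weighted curvature integral $Q(t):=\int_{\Si_t}H\la'\,d\mu - 2n\int_{\Om_t}\fr{\la'\la''}{\la}\,dN$. For the bulk term one uses $\fr{d}{dt}\int_{\Om_t}f\,dN = \int_{\Si_t} f\br{\fr{n}{F}-\fr{u}{\la'}}d\mu$ with $f=\fr{\la'\la''}{\la}$. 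For the boundary term one needs the evolution of $H$ under a normal flow with speed $\p=\fr{n}{H}-\fr{u}{\la'}$, namely $\partial_t H = -\D\p - \br{|A|^2+\ov{\Rc}(\nu,\nu)}\p$, combined with $\partial_t(\la')=\la''\ip{\la\del_r}{\nu}\br{\fr{n}{H}-\fr{u}{\la'}}/\la = \la''u\p/\la$ — wait, more carefully $\partial_r\la' = \la''$ so $\partial_t\la' = \la''\,\bar g(\del_r,\dot x) = \la'' u\p/\la$. Integrating by parts and using the Codazzi/Simons structure of warped products together with the Minkowski-type identity $\int_{\Si}\br{\la' g_{ij}-u h_{ij}}$-type relations (the generalized Minkowski formula $\int_\Si H u\,d\mu = n\int_\Si \la'\,d\mu$ in space forms, suitably corrected by the $m$-term for adS-Schwarzschild), the terms should combine so that $\fr{d}{dt}Q(t)\geq 0$, with a sign coming from an appropriate curvature condition — this is where the precise form of $\la'$ (i.e.\ that the ambient space is adS-Schwarzschild or hyperbolic) and the Newton--MacLaurin inequality $H^2\geq \fr{2n}{n-1}\cdot\fr{n-1}{2n}\dots$ relating $|A|^2$ and $H^2$ enter. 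The cleaner route, which I would actually pursue, is to show directly that $Q$ is \emph{constant} along the flow (it is the natural conserved quantity for the $F=H$ constrained inverse flow, being essentially a multiple of a quermassintegral-type functional), while $|\Si_t|$ and $\int_{\Om_t}\la'\,dN$ are monotone. Concretely I expect $\fr{d}{dt}Q(t)=0$ and $\fr{d}{dt}|\Si_t|\leq 0$ (or $\geq 0$; the sign is fixed by the direction of convergence and Heintze--Karcher), and similarly for the weighted volume $\int_{\Om_t}\la'\,dN$.

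\textbf{Step 2: defining $\xi_0,\xi_1$ and passing to the limit.} On a radial coordinate slice $\{r=\rho\}$ one has $u=\la(\rho)$, $H=n\la'(\rho)/\la(\rho)$, $|\Si_\rho| = \om_n\la(\rho)^n$, and $Q(\Si_\rho)$, $|\Si_\rho|$, $\int_{\Om_\rho}\la'\,dN$ are all explicit increasing functions of $\rho$. Eliminating $\rho$ defines $\xi_1$ via $Q(\Si_\rho)=\xi_1(|\Si_\rho|)$ and $\xi_0$ via $Q(\Si_\rho)=\xi_0\br{\int_{\Om_\rho}\la'\,dN}$; monotonicity of these relations in $\rho$, hence of $\xi_0,\xi_1$, is a direct computation using $\la'>0$. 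Since the flow converges smoothly to some slice $\{r=r_\8\}$, at time $t=0$ we have $Q(\Si_0)=Q(\Si_\8)=\xi_1(|\Si_\8|)$, and by monotonicity of $|\Si_t|$ we get $|\Si_\8|\geq |\Si_0|$ or $\leq$; combined with monotonicity of $\xi_1$ this yields $Q(\Si_0)=\xi_1(|\Si_\8|)\geq \xi_1(|\Si_0|)$, which is exactly \eqref{Geom-ineq-1}. The identical argument with $\int_{\Om_t}\la'\,dN$ in place of $|\Si_t|$ gives \eqref{Geom-ineq-2}.

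\textbf{Step 3: rigidity.} For the equality case, if $Q(\Si_0)=\xi_1(|\Si_0|)$ then by the above chain of inequalities all intermediate inequalities are equalities; in particular $|\Si_t|$ is constant along the flow, which by the strict sign in the monotonicity formula (the integrand being $\geq 0$ with equality iff $\Si_t$ is umbilic, via the Newton--MacLaurin / Cauchy--Schwarz inequality $H u \leq$ or $\geq n\la'$ with equality iff totally umbilic) forces $\Si_t$ to be umbilic for all $t$, and an umbilic star-shaped closed hypersurface in these warped products is a radial coordinate slice. Conversely slices give equality by the very definition of $\xi_0,\xi_1$.

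\textbf{Main obstacle.} The hard part will be Step 1 — establishing the precise monotonicity/conservation with the correct sign. This requires a careful Minkowski-type identity valid in the adS-Schwarzschild spaces (not just space forms), controlling the correction term generated by the $m\la^{1-n}$ in $\la'$ and the non-constant radial sectional curvature, and then combining the $\D\p$ boundary term, the $|A|^2$ term and the bulk $\fr{\la'\la''}{\la}$ term after integration by parts so that everything cancels except a manifestly-signed remainder controlled by the Newton--MacLaurin inequality. Getting the homogeneity bookkeeping and the boundary terms from integrating $\D\p$ exactly right, and verifying that the star-shaped mean-convex initial data indeed has $\ka\in\G_+$ (equivalently that mean-convexity is preserved and suffices for $F=H$), are the delicate points; everything downstream is soft.
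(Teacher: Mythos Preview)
Your overall plan---run the $F=H$ flow, invoke \cref{General-flow-main}, establish monotonicity, pass to the slice limit---is the paper's. But Step~1 has the monotonicities wrong, and this is not cosmetic: your proposed route of showing that $Q(t):=\int_{\Si_t}H\la'\,d\mu-2n\int_{\Om_t}\fr{\la'\la''}{\la}\,dN$ is \emph{constant} will simply fail, because it is not; your alternative guess $Q'\geq 0$ is also wrong. In fact $Q$ is \emph{non-increasing}, while $|\Si_t|$ and $\int_{\Om_t}\la'\,dN$ are both \emph{non-decreasing} (the latter two via the divergence identity $n\la'-Hu=\dive(\la\n r)$ and Brendle's Heintze--Karcher inequality, as in the proofs of \cref{Sphereflow-main,General-flow-main}). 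The chain is then $Q(\Si)\geq Q(\Si_\8)=\xi_1(|\Si_\8|)\geq \xi_1(|\Si|)$, and likewise for $\xi_0$.

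The missing ingredient in your ``main obstacle'' is the following. Using the static equation $\bar\D\la'\,\bar g-\bar\n^2\la'+\la'\ov{\Rc}=0$ one gets $\fr{d}{dt}\int_{\Si_t}H\la'=\int_{\Si_t}\br{2\s_2\la'+2H\fr{\la''}{\la}u}\cF$, and hence
\eq{Q'(t)=\int_{\Si_t}2\s_2\la'\br{\fr{n}{H}-\fr{u}{\la'}}-\int_{\Si_t}2H\fr{\la'\la''}{\la}\br{\fr{n}{H}-\fr{u}{\la'}}^2.}
The second integral has the right sign since $\la''\geq 0$. For the first, Newton--MacLaurin gives $\fr{2n\s_2}{H}\leq (n-1)H$, bounding it by $\int_{\Si_t}\br{(n-1)H\la'-2\s_2 u}$. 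That this is $\leq 0$ is a Minkowski-type identity: contracting the Newton tensor $\s_2^{ij}$ against \eqref{graph-h}, integrating by parts with $\n_i\s_2^{ij}=-\ov{\Rc}(\nu,x_{;m})g^{mj}$, and inserting \eqref{Ricci-WP} gives
\eq{\int_{\Si}\br{(n-1)H\la'-2\s_2 u}=-(n-1)\int_{\Si}\br{\fr{\la''}{\la}+\fr{1-\la'^2}{\la^2}}\la\|\n r\|^2\bar g(\del_r,\nu),}
and the bracket equals $\fr{m(n+1)}{2}\la^{-1-n}\geq 0$ for the adS-Schwarzschild family. This identity is the step you could not name; without it there is no sign on $Q'$, and $Q'$ is genuinely strictly negative unless $\Si_t$ is umbilic. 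Rigidity then follows by tracing equality back through $Q'\leq 0$ (equality in Newton--MacLaurin forces umbilicity), not through constancy of $|\Si_t|$.
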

 In particular, in the hyperbolic space, due to $\la''=\la$, inequality \eqref{Geom-ineq-2} reduces to
 \eq{\label{ineqnew}
\int_{\Sigma} H\la'd\mu-(n+1)n\int_{\Omega}\la' dN \ge n|\mathbb{S}^n|^{\frac{2}{n+1}}\left((n+1)\int_{\Omega} \la' dN\right)^{\frac{n-1}{n+1}},}
where $\la'(r)=\cosh r$. Equality in \eqref{ineqnew} holds if and only if $\Si$ is a geodesic sphere centered at the origin.
 The second author proved a Minkowski type inequality in \cite{Xia:/2016a} stating that for a closed horo-convex hypersurface $\Si\sub\mathbb{H}^{n+1}$ there holds
\eq{
\left(\int_\Sigma \la'd\mu\right)^2\ge \frac{n+1}{n} \int_{\Sigma} H\la'd\mu \int_{\Omega} \la' dN.
}

Combining this with \eqref{ineqnew}, we get:
\begin{thm}\label{Geom-Ineq-B}
Let $\Sigma$ be a closed horo-convex hypersurface in $\mathbb{H}^{n+1}$ with the origin lying inside $\Omega$. Then
\begin{eqnarray*}
&&\int_\Sigma \la'd\mu\ge \left[\left((n+1)\int_\Omega \la'dN\right)^2+|\mathbb{S}^n|^{\frac{2}{n+1}}\left((n+1)\int_\Omega \la' dN\right)^{\frac{2n}{n+1}}\right]^{\frac12}.
\end{eqnarray*}
 Equality holds if and only if $\Sigma$ is a geodesic sphere centered at the origin.
\end{thm}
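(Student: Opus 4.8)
The plan is to obtain \cref{Geom-Ineq-B} as a purely algebraic consequence of the two inequalities displayed immediately above its statement: the new inequality \eqref{ineqnew}, which is the specialization to $\mathbb{H}^{n+1}$ of \eqref{Geom-ineq-2} from \cref{Geom-ineq} (where $\la''=\la$), and the Minkowski-type inequality of \cite{Xia:/2016a}. First I would record that a closed horo-convex hypersurface $\Si\subset\mathbb{H}^{n+1}$ has all principal curvatures $\geq 1$, hence is strictly convex; in particular it is mean-convex and star-shaped with respect to every point of the domain it bounds, so that, the origin lying inside $\Om$, the hypersurface $\Si$ is admissible for \eqref{ineqnew} and the weight $\la'=\cosh r$ and the domain $\Om$ appearing there are the same ones as in \cite{Xia:/2016a}.

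Write $V:=\int_{\Om}\la'\,dN$, $I:=\int_{\Si}H\la'\,d\mu$ and $A:=\int_{\Si}\la'\,d\mu$; these are all positive. Then \eqref{ineqnew} reads
\[
I\;\geq\;(n+1)nV+n|\mathbb{S}^n|^{\frac{2}{n+1}}\bigl((n+1)V\bigr)^{\frac{n-1}{n+1}},
\]
and the inequality of \cite{Xia:/2016a} reads $A^2\geq\frac{n+1}{n}IV$. Multiplying the first by the positive factor $\frac{n+1}{n}V$ and using $(n+1)V\cdot\bigl((n+1)V\bigr)^{\frac{n-1}{n+1}}=\bigl((n+1)V\bigr)^{\frac{2n}{n+1}}$ gives
\[
\tfrac{n+1}{n}VI\;\geq\;\bigl((n+1)V\bigr)^{2}+|\mathbb{S}^n|^{\frac{2}{n+1}}\bigl((n+1)V\bigr)^{\frac{2n}{n+1}};
\]
chaining this with $A^2\geq\frac{n+1}{n}VI$ and taking square roots yields exactly the asserted estimate.

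For the equality characterization I would argue as follows: if $\Si$ is a geodesic sphere centered at the origin, then both \eqref{ineqnew} and the inequality of \cite{Xia:/2016a} hold with equality, hence so does the conclusion; conversely, if equality holds in the conclusion, then in the chain $A^2\geq\frac{n+1}{n}VI\geq\bigl((n+1)V\bigr)^2+|\mathbb{S}^n|^{\frac{2}{n+1}}\bigl((n+1)V\bigr)^{\frac{2n}{n+1}}=A^2$ both inequalities must be equalities, so in particular equality holds in \eqref{ineqnew}, which by the rigidity statement of \cref{Geom-ineq} forces $\Si$ to be a geodesic sphere centered at the origin.

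There is essentially no analytic obstacle here: the whole content sits in the two input inequalities, the first being the new theorem of the present paper and the second being quoted from \cite{Xia:/2016a}. The only points needing a little care are the verification that horo-convexity supplies the hypotheses of \eqref{ineqnew} (strict convexity, hence mean-convexity and star-shapedness), the observation that $V>0$ so that multiplying by $\frac{n+1}{n}V$ preserves the inequality, and the bookkeeping of the exponents of $(n+1)V$.
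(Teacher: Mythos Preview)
Your proposal is correct and matches the paper's approach exactly: the paper simply states ``Combining this with \eqref{ineqnew}, we get'' before the theorem, and your argument spells out precisely that combination (multiplying \eqref{ineqnew} by $\tfrac{n+1}{n}V$ and chaining with the inequality from \cite{Xia:/2016a}), together with the straightforward rigidity argument via the equality case of \cref{Geom-ineq}. The only additional content you supply---that horo-convexity implies mean-convexity and star-shapedness so that \eqref{ineqnew} applies---is a reasonable clarification the paper leaves implicit.
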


\begin{rem}
\cref{Geom-Ineq-B} already appeared in the paper \cite{GeWangWu:10/2015}, where it is the case $k=0$ in Thm.~9.2. However, their proof relies on an {\it{invalid}} inequality, namely \cite[equ.~(9.8)]{GeWangWu:10/2015}, which states
\eq{|\Si|^{\fr{n+1}{n}}\geq |\mathbb{S}^n|^{\frac1n}\int_{\Si}u d\mu \quad\left(=|\mathbb{S}^n|^{\frac1n}\int_{\Omega}\la' dN\right).}
This inequality is already incorrect on geodesic spheres not centered at the origin. \cref{Geom-Ineq-B} fixes this gap in the proof of \cite[Thm.~9.2]{GeWangWu:10/2015}.

\end{rem}

\begin{rem}  By using the classical inverse mean curvature flow, Brendle-Hung-Wang proved in \cite{BrendleHungWang:01/2016} for a closed, star-shaped and mean-convex hypersurface $\Sigma$ in anti-de-Sitter Schwarzschild space, that
  \eq{\label{BHW-1}
\int_{\Sigma} H\la'd\mu-(n+1)n\int_{\Omega}\la' dN \ge n|\mathbb{S}^n|^{\frac1n}\left( |\Sigma|^{\frac{n-1}{n}}- |\del N|^{\frac{n-1}{n}}\right).
}
In particular, in the hyperbolic space, they get 
  \eq{\label{BHW-2}
\int_{\Sigma} H\la'd\mu-(n+1)n\int_{\Omega}\la' dN \ge n|\mathbb{S}^n|^{\frac1n} |\Sigma|^{\frac{n-1}{n}}.}
\eqref{BHW-1} is different from \eqref{Geom-ineq-1}, in the sense that the right hand side of \eqref{Geom-ineq-1} does not depend on the horizon $\{a\}\x \bbS^{n}$.   
\end{rem}

Another nice corollary is given by the following area bound for star-shaped and mean convex hypersurfaces in ambient spaces of non-positive radial curvature. It is neither clear to the authors, whether this bound is evident by other means, nor if it has been recorded before. It follows from \cref{General-flow-main}, the monotonicity of area in these spaces, cf. \eqref{Mon-area}, and \cref{Barriers}.

\begin{cor}
Let $a,b\in\R$ and $(N,\bar g)$ be the warped space $((a,b)\x \bbS^n,dr^2+\la^2(r)\s)$ with $\la>0$, $\la'>0$ and $\la''\geq 0$. Let $\Sigma\sub N$ be a closed, star-shaped and mean-convex hypersurface,
\eq{\Sigma=\{(r(y),y)\in N\cn y\in \bbS^n \}.}
Then the area of $\Sigma$ satisfies
\eq{|\Sigma|\leq |\mathbb{S}^n|\la\br{r_{max}}^n,}
where $r_{max}=\max_{\bbS^n} r.$
\end{cor}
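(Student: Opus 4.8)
The plan is to run the flow \eqref{flow} with $F$ equal to the mean curvature, starting from $\Sigma$, and to combine the convergence statement of \cref{General-flow-main} with the barrier property of coordinate slices and with the monotonicity of area.

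First I would choose $F=H=\sum_i\kappa_i$ on the cone $\Gamma=\Gamma_1:=\{\kappa\in\R^n\cn\sum_i\kappa_i>0\}$. This $\Gamma_1$ is a symmetric, convex, open cone containing $\Gamma_+$, and $F$ is positive on $\Gamma_1$, strictly monotone, homogeneous of degree one, concave (being linear), vanishing on $\partial\Gamma_1$, with $F(1,\dots,1)=n$; so \cref{F} is satisfied. Since $\Sigma$ is presented as a graph over $\bbS^n$ and is mean-convex, all its $n$-tuples of principal curvatures lie in $\Gamma_1$, hence $\Sigma$ is an admissible initial embedding for \eqref{flow} and \cref{General-flow-main} produces a solution $x(t,\cdot)$, $t\in[0,\infty)$, converging in $C^\infty$ to a coordinate slice $\{r=r_\infty\}$ with $r_\infty\in(a,b)$.

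Next I would use that every coordinate slice $\{r=c\}$ is a stationary solution of \eqref{flow}: along it all principal curvatures equal $\la'(c)/\la(c)$, so by $1$-homogeneity and $F(1,\dots,1)=n$ one gets $n/F=\la(c)/\la'(c)$, while $u=\la(c)$ with outward normal $\partial_r$, so the speed $n/F-u/\la'$ vanishes identically. As $M_0=\Sigma$ lies in the region $\{r\le r_{max}\}$ bounded by the stationary slice $\{r=r_{max}\}$, the comparison principle for \eqref{flow}, i.e. \cref{Barriers}, forces $M_t\sub\{r\le r_{max}\}$ for all $t\ge0$; letting $t\to\infty$ gives $r_\infty\le r_{max}$. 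Finally, by the monotonicity of area along \eqref{flow} in warped products with $\la''\ge0$, cf. \eqref{Mon-area}, the map $t\mapsto|M_t|$ is non-decreasing, so combining with the $C^\infty$-convergence $M_t\to\{r=r_\infty\}$ and with $\la'>0$ I obtain
\[|\Sigma|=|M_0|\le\lim_{t\to\infty}|M_t|=|\{r=r_\infty\}|=|\mathbb{S}^n|\la(r_\infty)^n\le|\mathbb{S}^n|\la(r_{max})^n,\]
as claimed.

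Most of the content is carried by the cited results, so I expect the argument to be short; the one point requiring care is the barrier step, namely confirming that coordinate slices are genuine barriers so that $M_t$ cannot escape $\{r\le r_{max}\}$, which is precisely \cref{Barriers}. It is also worth keeping in mind that the area monotonicity \eqref{Mon-area} — and hence the bound — genuinely relies on both the mean-convexity and the star-shapedness of $\Sigma$, which is why the estimate is not obvious by elementary means.
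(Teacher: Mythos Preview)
Your proposal is correct and follows exactly the approach sketched in the paper: the corollary is stated there as an immediate consequence of \cref{General-flow-main}, the area monotonicity \eqref{Mon-area}, and the barrier \cref{Barriers}, and you have spelled out precisely these three ingredients with the choice $F=H$. There is nothing to add.
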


It would be very interesting to find further monotone quantities along these flows, in particular in a spherical ambient space.

The paper is organized as follows. In \cref{Notation,Ev}, we collect the notation and derive the fundamental evolution equations for several geometric quantities. In \cref{sec:Bound-F,sec:Grad-Bound,sec:Pres-Conv,sec:Speed-Bound}, we derive a priori estimates under various conditions on $F$ and $\la$ and in \cref{sec:Pf-Main} we complete the proof of \cref{Sphereflow-main} and \cref{General-flow-main}. \Cref{sec:Geom-Ineq} is devoted to prove monotonicity for various geometric quantities and in turn the geometric inequalities in \cref{Geom-ineq}.

\medskip

\section{Notation and conventions}\label{Notation}
\subsection{Conventions on Riemannian geometry}
\subsubsection*{Intrinsic Curvature}
Let $(M^n,g)$ be a Riemannian manifold. With respect to a local frame $(e_i)_{1\leq i\leq n}$ of the tangent bundle, let $(g_{ij})$ denote the coordinate functions of $g$ with respect to the basis $(\e^i\otimes \e^j)_{1\leq i,j\leq n}$, where $\e^i$ denote the basis elements dual to $e_i$. Let $(g^{ij})$ denote the inverse matrix of $(g_{ij})$. For a $(k,l)$-tensor field $T$, the coordinates of which with respect to this frame are given by
\eq{T=(T^{i_1\dots i_k}_{j_1\dots j_l}),}
we can define $(k+1,l-1)$-tensor fields by using the tangent-cotangent isomorphism induced by $g$, e.g.
\eq{T^{i_1\dots i_{k+1}}_{j_1\dots j_{l-1}}=T^{i_1\dots i_k}_{j_1\dots j_l}g^{j_l i_{k+1}}.}
Of course we can also raise other indices to different slots, but it will always be apparent, or explicitly stated, which
one is meant.

The Lie-Bracket of two vector fields $X,Y$ on $M$ is given by
\eq{[X,Y]\p=X(Y\p)-Y(X\p)\q\fa \p\in C^{\8}(M).}

Let $\n$ be the Levi-Civita connection of $g$, then for a $(k,l)$ tensor field $T$, its covariant derivative $\n T$ is a $(k,l+1)$ tensor field given by
\eq{&(\n T)(Y^1,\dots, Y^k,X_1,\dots,X_l,X)\\
=~&(\n_{X} T)(Y^1,\dots, Y^k,X_1,\dots,X_{l})\\
						=~&X(T(Y^1,\dots,Y^k,X_1,\dots,X_l))-T(\n_X Y^1,Y^2,\dots, Y^k,X_1,\dots,X_l)-\ldots\\
                        \hp{=}~&-T(Y^1,\dots,Y^k,X_1,\dots,X_{l-1}\n_X X_l).}
We denote by $\n^m T$ the $m$-th covariant derivative of $T$ and its coordinates with respect to a basis $(e_i)_{1\leq i\leq n}$ are denoted by
\eq{\n^m T=\br{T^{i_1\dots i_k}_{j_1\dots j_l;j_{l+1}\dots j_{l+m}}},}
where all indices appearing after the semicolon indicate covariant derivatives. The $(1,3)$ Riemannian curvature tensor is defined by
\eq{\label{Rm}\Rm(X,Y)Z=\n_X\n_Y Z-\n_Y\n_X Z-\n_{[X,Y]}Z,}
or with respect to the basis $(e_i),$
\eq{\Rm(e_i,e_j)e_k={R_{ijk}}^l e_l,}
where we use the summation convention (and will henceforth do so). The coordinate expression of \eqref{Rm}, the so-called {\it{Ricci-identities}}, read
\eq{\label{Ricci}X^k_{\ ;ij}-X^k_{\ ;ji}=-{R_{ijm}}^k X^m}
for all vector fields $X=(X^k)$.
We also denote the $(0,4)$ version of the curvature tensor by $\Rm$,
\eq{\Rm(W,X,Y,Z)=g(\Rm(W,X)Y,Z).}
The Ricci curvature can unambiguously defined in coordinates by
\eq{\Rc(e_i,e_j)=R_{ij}={R_{kij}}^k.}
 The scalar curvature is
\eq{R=R_i^{\ i}=g^{ki}R_{ki}.}

\subsubsection*{Extrinsic curvature}
When dealing with immersed hypersurfaces
\eq{x\cn M\hra N}
of a Riemannian manifold $M^n$ into an ambient Riemannian manifold $N^{n+1}$, we furnish all the previous geometric quantities of $N$ with an overbar, e.g. $\bar g=(\bar{g}_{\al\be})$, where greek indices run from $0$ to $n$, $\bar\n$ etc. We keep using latin indices, running from $1$ to $n$, for geometric quantities of $M$, e.g. the induced metric $g=(g_{ij})$. The induced geometry of $M$ is governed by the following relations. The (local) second fundamental form $h=(h_{ij})$ is given by the Gaussian formula
\eq{\label{GF}\bar\n_{X}Y=\n_X Y-h(X,Y)\nu,}
where $\nu$ is a local normal field. Note that here (and for the rest of the paper), we will abuse notation by disregarding the necessity to distinguish between a vector $X\in T_p M$ and its push-forward $x_{\ast}X\in T_p N$. The Weingarten endomorphism $A=(h^i_j)$ is given by $h^i_j=g^{ki}h_{kj}$ and there holds the Weingarten equation
\eq{\label{Weingarten} \bar\n_X \nu=A(X), }
or in coordinates
\eq{\nu^{\al}_{\ ;i}=h^k_i x^{\al}_{\ ;k}.}
We also have the Codazzi equation
\eq{\label{Codazzi}\n_Z h(X,Y)-\n_Y h(X,Z)=-\ov{\Rm}(\nu,X,Y,Z), }
or
\eq{h_{ij;k}-h_{ik;j}=-\bar{R}_{\al\be\g\de}\nu^{\al}x^{\be}_{\ ;i}x^{\g}_{\ ;j}x^{\de}_{\ ;k},}
and the Gauss equation
\eq{\label{GE}\Rm(W,X,Y,Z)=\ov{\Rm}(W,X,Y,Z)+h(W,Z)h(X,Y)-h(W,Y)h(X,Z)}
or
\eq{R_{ijkl}=\bar{R}_{\al\be\g\de}x^{\al}_{\ ;i}x^{\be}_{\ ;j}x^{\g}_{\ ;k}x^{\de}_{\ ;l}+h_{il}h_{jk}-h_{ik}h_{jl}.}
\subsubsection*{Graphs in warped products}
In this paper we deal with warped products
\eq{N=(a,b)\x \bbS^n}
with metric
\eq{\bar{g}=dr^2+\la^2(r)\s,}
where $\s$ is the round metric of $\bbS^n$. We need the specific structure of the Ricci curvature tensor in such a warped product. There holds
\eq{\label{Ricci-WP}
\ov\Rc&=-\left(\frac{\lambda''}{\lambda}-(n-1)\frac{1-\lambda'^2}{\lambda^2}\right)\bar g-(n-1)\left(\frac{\lambda''}{\lambda}+\frac{1-\lambda'^2}{\lambda^2}\right)dr\otimes dr,
}
cf. \cite[Prop. 2.1]{Brendle:06/2013}.

 Our hypersurfaces
\eq{x\cn M\hra N}
will all be graphs over $\bbS^n,$
\eq{x(M)=\{(r(y),y)\cn y\in \bbS^n\}=\{(r(y(\xi)),y(\xi))\cn \xi \in M\},}
where we do not make a notational difference between the radial coordinate $r$ of $N$ and the function $r_{|M}$. Along $M$ we will always pick the {\it{outward}} pointing normal 
\eq{\nu=v^{-1}(1,-\la^{-2}\s^{ik}\del_k r),}
where 
\eq{v^2=1+\la^{-2}\s^{ij}\del_i r\del_j r,}
and use this normal in the Gaussian formula \eqref{GF}. The support function of $M$ is defined by
\eq{u=\bar{g}(\la\del_r,\nu)=\fr{\la}{v}.}
There is also a relation between the second fundamental form and the radial function on the hypersurface. Let 
\eq{\bar h=\la'\la\s,}
then there holds
\eq{\label{graph-h2}v^{-1}h=-\n^2 r+\bar h,}
cf. 
\cite[equ.~(1.5.10)]{Gerhardt:/2006}. Since the induced metric is given by
\eq{g_{ij}=r_{;i}r_{;j}+\la^2\s_{ij},}
we obtain
\eq{\label{graph-h}v^{-1}h_{ij}=-r_{;ij}+\fr{\la'}{\la}g_{ij}-\fr{\la'}{\la}r_{;i}r_{;j}.}

Define \eq{\varphi(r)=\int_{a}^r \frac{1}{\lambda(r)}.} Regarding $r$ as a function on $\mathbb{S}^n$, we have
\eq{h_j^i=\frac{\la'}{\la v}\de_i^j-\frac{1}{\la v}\tilde{g}^{jk}\varphi_{,ki},}
where \eq{\tilde{g}^{ij}=\sigma^{ij}-\frac{\varphi_{,}^{\ i}\varphi_{,}^{\ j}}{v^2}} and the covariant derivative and index raising is performed with respect to the spherical metric $\sigma_{ij}$, cf. \cite[equ.~(3.26)]{Gerhardt:11/2011}. We will use $\hat \n$ to denote the covariant derivative on $\mathbb{S}^n$ throughout this paper. 

\subsubsection*{Anti-de-Sitter Schwarzschild space}

The anti-de-Sitter Schwarzschild manifolds are asymptotically hyperbolic Riemannian warped products of the form
\eq{N=(r_{0},\8)\x \bbS^{n}}
equipped with the warped product metric
\eq{\bar{g}=dr^{2}+\la^{2}(r)\s,}
where $\la$ satisfies
\eq{\la'=\rt{1+\la^{2}-m\la^{1-n}}}
with $m>0$ and horizon $\del N=\{r_{0}\}\x\bbS^{n}$. The limiting case $m= 0$ is the hyperbolic metric. These Riemannian manifolds carry the property to be {\it{static}}, i.e.
\eq{\bar\D \la' \bar g-\bar\n^{2}\la'+\la'\ov\Rc=0,}
which ensures that the Lorentzian warped product $-\la'^{2}dt^{2}+\bar{g}$
is a solution to Einstein's equation.

\subsection{Curvature functions}
In \cref{F}, the part of our normal variation that depends on the curvature of the hypersurface, was stipulated to depend on the principal curvatures
\eq{F=F(\ka_i).}
However, in the calculation of the evolution equations it is often useful to consider $F$ as a function of the diagonalizable Weingarten operator $A$,
\eq{F=F(A):=F(\mrm{EV}(A)),}
where $\mrm{EV}(A)$ is the unordered $n$-tuple of eigenvalues of $A$. This is well-defined due to the symmetry of $F$. However, when using this definition, $F$ is not defined on the whole endomorphism bundle, but only on the diagonalizable operators. It is thus most convenient to consider the function defined by,
\eq{\hat{F}(g,h):=F\br{\fr 12 g^{ik}(h_{kj}+h_{jk})}}
for all positive definite $g$ and all bilinear forms $h\in T^{0,2}_p M$.
Then 
\eq{\hat F^{ij}=\fr{\del F}{\del h_{ij}}}
is a $(2,0)$-tensor and we also write
\eq{\hat F^{ij,kl}=\fr{\del F}{\del h_{ij}\del h_{kl}}.}
Furthermore, if $F=F(\ka_i)$ is strictly monotone, then $\hat{F}^{ij}$ is strictly elliptic. If $F$ is concave, then
\eq{\hat F^{ij,kl}\eta_{ij}\eta_{kl}\leq 0}
for all symmetric $(\eta_{ij})$. We refer to \cite{Andrews:/2007}, \cite[Ch.~2]{Gerhardt:/2006} and \cite{Scheuer:03/2017} for more details on curvature functions. 

Furthermore we will abuse notation and also write $F$ for $\hat{F},$ since no confusion will be possible. E.g., when writing $F^{ij}$, we can only mean $\hat{F}^{ij}$, since there are two contravariant indices. 

Let us denote by $\s_k$ the $k$-th elementary symmetric polynomial and define the $k$-th normalized elementary symmetric polynomial by
\eq{H_k=\fr{1}{\binom{n}{k}}\s_k.}
Denote by $\Gamma_k$ the connected component of $\{\s_k>0\}$ which contains the point $(1,\dots,1)$.

\medskip

\section{Evolution equations}\label{Ev}

In this section we deduce the evolution equations relevant to study the flow
\eq{\label{Flow}\dot{x}=\br{\fr{n}{F}-\fr{u}{\la'}}\nu\equiv \cF\nu.}
The following basic evolution equations are well known and can be found in many places. We use the reference \cite[Ch.~2.3]{Gerhardt:/2006}, where we note that we use the other sign on the curvature tensor.

\begin{lemma}
Along \eqref{Flow} the following evolution equations hold:
\eq{\dot{g}=2\cF h,}
\eq{\fr{\bar\n}{dt}\nu=-\grad\cF,}
\eq{\label{EV-W}\dot{h}^j_i=-\cF^{\hp{;i} j}_{;i}-\cF h^j_kh^k_i-\cF\bar{R}_{\al\be\g\de}x^{\al}_{\ ;i}\nu^{\be}\nu^{\g}x^{\de}_{\ ;k}g^{kj},}
and
\eq{\label{Ev-h}\dot{h}_{ij}=-\cF_{;ij}+\cF h_{ik}h^k_j-\cF\bar{R}_{\al\be\g\de}x^{\al}_{\ ;i}\nu^{\be}\nu^{\g}x^{\de}_{\ ;j}.}
\end{lemma}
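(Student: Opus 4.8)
The plan is to derive the four evolution equations by differentiating the defining geometric identities along the flow $\dot x = \cF \nu$ and using the standard commutation formulas. First I would compute $\dot g_{ij}$: since $g_{ij} = \bar g(x_{;i}, x_{;j})$, differentiating in $t$ and using that $\partial_t$ and $\partial_i$ commute gives $\dot g_{ij} = \bar g(\bar\n_{\del_t} x_{;i}, x_{;j}) + \bar g(x_{;i}, \bar\n_{\del_t} x_{;j})$. Now $\bar\n_{\del_t} x_{;i} = \bar\n_{\del_i}(\cF\nu) = \cF_{;i}\nu + \cF \bar\n_{\del_i}\nu = \cF_{;i}\nu + \cF h_i^k x_{;k}$ by the Weingarten equation \eqref{Weingarten}; pairing with the tangential $x_{;j}$ kills the normal part and yields $\dot g_{ij} = 2\cF h_{ij}$, i.e. $\dot g = 2\cF h$. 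For the normal, I would differentiate $\bar g(\nu,\nu)=1$ to see $\frac{\bar\n}{dt}\nu$ is tangential, and differentiate $\bar g(\nu, x_{;i}) = 0$ to get $\bar g(\frac{\bar\n}{dt}\nu, x_{;i}) = -\bar g(\nu, \bar\n_{\del_t} x_{;i}) = -\cF_{;i}$, whence $\frac{\bar\n}{dt}\nu = -\grad\cF$.

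The second fundamental form is the main computation. Starting from $h_{ij} = -\bar g(\bar\n_{\del_i} x_{;j}, \nu)$ (consistent with the Gaussian formula \eqref{GF} and the sign convention adopted), I would differentiate in $t$, obtaining two terms: one from $\frac{\bar\n}{dt}$ hitting $\bar\n_{\del_i} x_{;j}$ and one from it hitting $\nu$. The second term gives $-\bar g(\bar\n_{\del_i} x_{;j}, \frac{\bar\n}{dt}\nu) = \bar g(\bar\n_{\del_i} x_{;j}, \grad\cF)$, which after splitting $\bar\n_{\del_i} x_{;j}$ into tangential $\n_{\del_i} x_{;j}$ and normal $-h_{ij}\nu$ contributes $\cF_{;ij}$ minus a $\n\cF$ term that gets absorbed. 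The first term requires commuting $\frac{\bar\n}{dt}$ past $\bar\n_{\del_i}$ acting on the vector field $x_{;j}$, which introduces the ambient curvature: $\bar\n_{\del_t}\bar\n_{\del_i} x_{;j} = \bar\n_{\del_i}\bar\n_{\del_t} x_{;j} + \ov\Rm(\del_t x, \del_i x) x_{;j} = \bar\n_{\del_i}(\cF_{;j}\nu + \cF h_j^k x_{;k}) + \cF\,\ov\Rm(\nu, x_{;i}) x_{;j}$. Expanding the first piece with Weingarten and Gauss and pairing everything with $\nu$, the $\cF_{;j}$-term produces $-\cF h_{ik}h^k_j$ with the opposite sign needed, the $\cF h_j^k$-term produces $\cF h_{ik}h_j^k$, and the curvature term produces $-\cF\,\ov R_{\al\be\g\de}x^\al_{;i}\nu^\be\nu^\g x^\de_{;j}$. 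Collecting gives \eqref{Ev-h}: $\dot h_{ij} = -\cF_{;ij} + \cF h_{ik}h^k_j - \cF\,\ov R_{\al\be\g\de}x^\al_{;i}\nu^\be\nu^\g x^\de_{;j}$.

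Finally, \eqref{EV-W} follows from \eqref{Ev-h} by raising an index: $h^j_i = g^{jk}h_{ki}$, so $\dot h^j_i = \dot g^{jk} h_{ki} + g^{jk}\dot h_{ki}$, and $\dot g^{jk} = -g^{jl}g^{km}\dot g_{lm} = -2\cF h^{jk}$. Substituting \eqref{Ev-h} and simplifying, the $-2\cF h^{jk}h_{ki}$ term combines with the $+\cF h_k^j h^k_i$ term (now with one index raised twice) to leave $-\cF h^j_k h^k_i$, and we must also commute covariant derivatives: $\cF_{;\ i}^{\ j} - \cF^{\ \,j}_{;i}$ differ by a Ricci/Riemann term on $M$ which, via the Gauss equation \eqref{GE}, rewrites the intrinsic curvature in terms of $h$ and $\ov\Rm$; the $h$-terms merge into $-\cF h^j_k h^k_i$ and the ambient term stays, giving $\dot h^j_i = -\cF^{\ \,j}_{;i\ } - \cF h^j_k h^k_i - \cF\,\ov R_{\al\be\g\de}x^\al_{;i}\nu^\be\nu^\g x^\de_{;k}g^{kj}$. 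The main obstacle is purely bookkeeping: keeping the sign conventions straight (the paper warns it uses the opposite curvature sign from Gerhardt) and correctly tracking which covariant-derivative commutations produce intrinsic versus ambient curvature contributions; there is no analytic difficulty, but one wrong sign propagates everywhere, so I would double-check against the flat and slice cases.
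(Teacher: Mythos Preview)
Your approach is the standard direct computation and is exactly what the paper has in mind; indeed the paper gives no proof at all, only a citation to Gerhardt's book, where precisely this derivation appears. One small correction to your last paragraph: in passing from \eqref{Ev-h} to \eqref{EV-W} there is \emph{no} commutation of covariant derivatives required, because $\cF$ is a scalar function and its Hessian $\cF_{;ij}$ is symmetric. Raising an index simply yields $g^{jk}\cF_{;ki}=g^{jk}\cF_{;ik}=\cF_{;i}^{\ \ j}$ directly; so the Ricci identity and the Gauss equation play no role here, and the only thing that happens is $-2\cF h^{jk}h_{ki}+\cF h^j_l h^l_i=-\cF h^j_k h^k_i$, as you already noted.
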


We need some further special evolution equations.

\begin{lemma}\label{Ev-Eq}
Define the operator $\cL$ by
\eq{\cL=\del_t-\fr{n}{F^2}F^{ij}\n^2_{ij}-\fr{\la}{\la'}{r_{;}}^k \n_k.}
Along the flow \eqref{Flow} of graphs 
\eq{M_t=\{(r(t,y),y)\cn y\in \bbS^n\}} we have the following evolution equations for the radial function $r$, the support function $u$ and the curvature function $F$:

\eq{\label{Ev-r}\cL r=\fr{2n}{vF}-\fr{\la}{\la'}-\fr{n\la'}{\la F^2}F^{ij}g_{ij}+\fr{n\la'}{\la F^2}F^{ij}r_{;i}r_{;j},}

\eq{\label{Ev-u}\cL u&=\fr{n}{F^2}\br{F^{ij}h_{ik}h^k_j-\fr 1n F^2}u-\fr{\la''\la}{\la'^2}\|\n r\|^2u\\
			&\hp{=}+\fr{n\la}{F^2}F^{ij}\bar{R}_{\al\be\g\de}\nu^{\al}x^{\be}_{\ ;i}x^{\g}_{\ ;m}x^{\de}_{\ ;j}{r_{;}}^m,} 

\eq{\label{EV-F}\cL F&=-\frac{2n}{F^3}F^{ij}F_{;i}F_{;j}-\frac{n}{F}\left(F^{ij}h_{jk}h^k_i-\frac1n F^2\right)+\frac{u^2\la''}{\la\la'^2}F-\frac{u\la''}{\la\la'}F^{ij}g_{ij}\\
&\hp{=}+\frac{u}{\la'^2}\left(\frac{\la'\la''}{\la}-\la'''+\frac{2{\la''}^2}{\la'}\right)F^{ij}r_{;i}r_{;j}-\frac{2\la''}{\la'^2}F^{ij}u_{;i} r_{;j}\\
&\hp{=}-\frac{\la}{\la'}F^{ij}\bar{R}_{\al\be\g\de}\nu^{\al}x^{\be}_{\ ;i}x^{\g}_{\ ;m}x^{\de}_{\ ;j}{r_{;}}^m-\left(\frac{n}{F}-\frac{u}{\la'}\right)F^{ij}\bar{R}_{\al\be\g\de}x^{\al}_{\ ;i}\nu^{\be}\nu^{\g}x^{\de}_{\ ;j}.}
\end{lemma}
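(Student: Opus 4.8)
The plan is to derive all three evolution equations by direct computation starting from the basic evolution equations in the preceding lemma and the graph formulas collected in Section~\ref{Notation}. First I would fix the structural identities to be used repeatedly: the graph relation \eqref{graph-h} expressing $h_{ij}$ in terms of $r_{;ij}$, the definitions $u=\la/v$ and $v^2=1+\la^{-2}\s^{ij}r_{;i}r_{;j}$, and the Ricci formula \eqref{Ricci-WP} for $\ov\Rc$ in a warped product. I would also record the elementary facts $\bar\n r=\del_r$ along radial directions, $\bar g(\del_r,\nu)=v^{-1}$, and that $\|\n r\|^2=g^{ij}r_{;i}r_{;j}=v^{-2}\la^{-2}\s^{ij}r_{;i}r_{;j}\cdot\la^2 = 1-v^{-2}$, so that $u^2/\la^2 = v^{-2}$ and $\|\n r\|^2 u = (1-v^{-2})u$; these conversions are what make the various forms of the error terms interchangeable.

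For \eqref{Ev-r}: since $r$ is the radial coordinate, $\dot r = \cF\,\bar g(\bar\n r,\nu)=\cF v^{-1} = v^{-1}(n/F - u/\la')$. I would then compute $\n^2 r$ and $\n r$ contracted against $F^{ij}$ using \eqref{graph-h}, namely $v^{-1}h_{ij} = -r_{;ij} + (\la'/\la)(g_{ij}-r_{;i}r_{;j})$, so $F^{ij}r_{;ij} = -vF^{ij}h_{ij} + (\la'/\la)(F^{ij}g_{ij}-F^{ij}r_{;i}r_{;j})$, and use homogeneity $F^{ij}h_{ij}=F$. Assembling $\cL r = \dot r - \tfrac{n}{F^2}F^{ij}r_{;ij} - \tfrac{\la}{\la'}\s\text{-term}$ and simplifying $v^{-1}\tfrac{n}{F}+\tfrac{n}{F}= \tfrac{2n}{vF}$ after absorbing the $u/\la'$ contribution against the $\tfrac{\la}{\la'}r_{;}^k\n_k r$ transport term gives the stated identity. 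For \eqref{Ev-u}: I would write $u = \bar g(\la\del_r,\nu)$ and differentiate, using $\tfrac{\bar\n}{dt}\nu=-\grad\cF$ from the previous lemma and $\del_t(\la\del_r) = \cF\la'v^{-1}\del_r + \la\,\tfrac{\bar\n}{dt}\del_r$; the term $\bar g(\grad\cF,\cdot)$ expands via $\grad\cF = \grad(n/F) - \grad(u/\la')$, and the Hessian of $u$ appearing in $\cL u$ is computed from the standard identity $u_{;ij} = \la' h_{ij} - \bar h_{ij}$-type relations combined with differentiating \eqref{graph-h}. The curvature term comes from commuting derivatives on $\nu$ and using the Weingarten and Codazzi equations, leaving the ambient Riemann contraction displayed. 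For \eqref{EV-F}: I would apply $\del_t$ to $F = F(h^j_i)$, so $\dot F = F^i_j \dot h^j_i$, substitute \eqref{EV-W}, and then convert $F^i_j \cF^{;j}_{;i}$ into $\tfrac{n}{F^2}F^{ij}F_{;ij}$ plus gradient corrections using $\cF = n/F - u/\la'$; here one picks up the $-\tfrac{2n}{F^3}F^{ij}F_{;i}F_{;j}$ term from differentiating $n/F$ twice and the $\la''$-terms from differentiating $u/\la'$ and invoking \eqref{Ev-u} together with the chain rule on $\la'(r)$, which produces $\la'''$ and $(\la'')^2/\la'$.

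The main obstacle is bookkeeping of the lower-order terms in \eqref{EV-F}: the coefficients $\tfrac{\la'\la''}{\la}-\la'''+\tfrac{2(\la'')^2}{\la'}$ and $\tfrac{u^2\la''}{\la\la'^2}$ arise only after carefully combining (i) the explicit Ricci curvature \eqref{Ricci-WP} specialized to the mixed terms $\bar R_{\al\be\g\de}\nu^\al x^\be_{\ ;i}x^\g_{\ ;m}x^\de_{\ ;j}r_{;}^m$ and $\bar R_{\al\be\g\de}x^\al_{\ ;i}\nu^\be\nu^\g x^\de_{\ ;j}$, (ii) the second derivatives of the composition $r\mapsto\la'(r)$ along the flow, and (iii) the transport term $-\tfrac{\la}{\la'}r_{;}^k\n_k F$ that is built into $\cL$. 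I would organize this by first deriving \eqref{Ev-r} and \eqref{Ev-u}, then feeding them into the computation of $\cL(u/\la')$, so that $\cL F$ is obtained as $\cL(n/F)$ rewritten via $\cL$ acting on $F$ directly, matching terms. I expect that the warped-product curvature identities need to be recorded as a separate sub-lemma (e.g.\ expressing both ambient Riemann contractions purely in terms of $\la,\la',\la'',v,r_{;i}$) before the term-matching becomes transparent; this is the step most likely to hide sign or coefficient errors and where I would spend the most care.
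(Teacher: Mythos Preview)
Your approach matches the paper's: compute $\dot r,\dot u,\dot F$ from the basic evolution lemma, derive a Hessian formula for $u$ (the paper does this cleanly via the conformal-Killing identity $\bar\n_X(\la\del_r)=\la'X$ and Codazzi, obtaining $u_{;ij}=\la'h_{ij}+\la r_{;k}{h_{ij;}}^k-h_i^kh_{kj}u-\la\bar R_{\al\be\g\de}\nu^\al x^\be_{\ ;i}x^\g_{\ ;m}x^\de_{\ ;j}{r_{;}}^m$), and then assemble $\cL r,\cL u,\cL F$ using \eqref{graph-h}. One remark: the warped-product curvature sub-lemma you anticipate is unnecessary---the stated formulas keep the $\bar R$-contractions abstract, and all the $\la'',\la'''$-coefficients in \eqref{EV-F} arise solely from the chain rule on $\la'(r)$ (i.e.\ $(\la')_{;ij}=\la'''r_{;i}r_{;j}+\la''r_{;ij}$ with $r_{;ij}$ replaced via \eqref{graph-h}), so the bookkeeping is lighter than you expect.
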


\pf{
The $0$-component of \eqref{Flow} gives
\eq{\dot{r}=\cF v^{-1}=\br{\fr{n}{F}-\fr{u}{\la'}}v^{-1},}
while from \eqref{graph-h} we see, using the $1$-homogeneity of $F$,
\eq{-\fr{n}{F^2}F^{ij}r_{;ij}=\fr{n}{vF}-\fr{n\la'}{\la F^2}F^{ij}g_{ij}+\fr{n\la'}{\la F^2}F^{ij}r_{;i}r_{;j}.}
Adding up gives \eqref{Ev-r}.

To prove \eqref{Ev-u}, note that $\la\del_r$ is a conformal vector field, i.e. for all ambient vector fields $\bar X$ there holds
\eq{\bar\n_{\bar X}(\la\del_r)=\la' \bar X.}
Hence
\eq{\label{Ev-u-1}\dot{u}=\bar{g}(\la'\dot{x},\nu)+\bar{g}(\la\del_r,\bar\n_{\dot{x}}\nu)=\la'\cF-\bar{g}(\la\del_r,\grad\cF).}
Furthermore there holds
\eq{Xu=\bar{g}(\la\del_r,A(X))}
and
\eq{\n^2u(X,Y)&=Y(Xu)-(\n_Y X)u\\
			&=\la'h(X,Y)+\bar{g}(\la\del_r,\n_Y A(X))-h(Y,A(X))u\q \fa X, Y\in TM.}
We use the Codazzi equation \eqref{Codazzi} to deduce
\eq{\bar{g}(\la\del_r,\n_YA(X))&=\la\bar{g}_{\al\be}r^{\al}x^{\be}_{\ ;k}h^k_{i;j}X^iY^j\\
				&=\la \bar{g}_{\al\be}r^{\al}x^{\be}_{\ ;k}{h_{ij;}}^k X^i Y^j-\la\bar{g}_{\al\be}r^{\al}x^{\be}_{\ ;k}\bar{R}_{\al\be\g\de}\nu^{\al}x^{\be}_{\ ;i}g^{km}x^{\g}_{\ ;m}x^{\de}_{\ ;j}.}
Note 
\eq{\bar{g}_{\al\be}r^{\al}x^{\be}_{\ ;k}=r_{;k},}
we thus get
\eq{\label{eq-u}u_{;ij}=\la'h_{ij}+ \la r_{;k}{h_{ij;}}^k- h_{i}^k h_{kj}u-\la\bar{R}_{\al\be\g\de}\nu^{\al}x^{\be}_{\ ;i}x^{\g}_{\ ;m}x^{\de}_{\ ;j}{r_{;}}^m.}
Since
\eq{\cF_{;k}=-\fr{n}{F^2}F^{ij}h_{ij;k}-\fr{u_{;k}}{\la'}+\fr{\la''u}{\la'^2}r_{;k},}
we obtain \eqref{Ev-u}.

From \eqref{EV-W}, we have
\eq{\dot{F}&=-F^{ij}\cF_{;ij}-F^{ij}\cF h_{jk}h^k_i-F^{ij}\cF\bar{R}_{\al\be\g\de}x^{\al}_{\ ;i}\nu^{\be}\nu^{\g}x^{\de}_{\ ;j}
\\&=\frac{n}{F^2}F^{ij}F_{;ij}-\frac{2n}{F^3}F^{ij}F_{;i}F_{;j}+\frac{1}{\la'}F^{ij}u_{;ij}-\frac{u}{\la'^2}F^{ij}\la'_{;ij}+\frac{2u}{\la'^3}F^{ij}\la'_{;i}\la'_{;j}\\
	&\hp{=}-\frac{2\la''}{\la'^2}F^{ij}u_{;i}r_{;j}-\left(\frac{n}{F}-\frac{u}{\la'}\right)F^{ij}h_{jk}h^k_i-\left(\frac{n}{F}-\frac{u}{\la'}\right)F^{ij}\bar{R}_{\al\be\g\de}x^{\al}_{\ ;i}\nu^{\be}\nu^{\g}x^{\de}_{\ ;j}.
}
Using \eqref{eq-u} and \eqref{graph-h}, we get \eqref{EV-F}.}

We also need the parabolic equation satisfied by the Weingarten operator. A similar calculation was performed in \cite[Lemma~2.4.1]{Gerhardt:/2006}, but since our flow speed is not directly covered by this reference, we deduce it for convenience.

\begin{lemma}\label{EV-W-B}
Along \eqref{Flow} the following evolution equation holds.
\eq{\label{ev-h}\cL h^j_i&=-\fr{2n}{F^3}F_{;i}{F_{;}}^j+\fr{n}{F^2}F^{kl,rs}h_{kl;i}{h_{rs;}}^j-\fr{\la''}{\la'^2}\br{u_{;i}{r_{;}}^j+r_{;i}{u_{;}}^j}\\  
				&\hp{=}-\fr{u}{\la'^2}\br{\la'''-\fr{2\la''^2}{\la'}-\fr{\la''\la'}{\la}}r_{;i}{r_{;}}^j+\br{1+\fr{u\la''}{\la'^2 v}}h^j_i\\
		&\hp{=}-\fr{u\la''}{\la'\la}\de^j_i-\fr{\la}{\la'}\bar{R}_{\al\be\g\de}\nu^{\al}x^{\be}_{\ ;i}x^{\g}_{\ ;m}x^{\de}_{\ ;l}{r_{;}}^mg^{lj}\\
        &\hp{=}-\br{\fr{n}{F}-\fr{u}{\la'}}\bar{R}_{\al\be\g\de}x^{\al}_{\ ;i}\nu^{\be}\nu^{\g}x^{\de}_{\ ;m}g^{mj}+\fr{n}{F^2}F^{kl}h_{rk}h^r_l h_{i}^j-\fr{2n}{F}h_{k}^jh^k_i\\
                &\hp{=}+\fr{n}{F^2}F^{kl}\bar{R}_{\al\be\g\de}\br{x^{\al}_{\ ;l}x^{\be}_{\ ;r}x^{\g}_{\ ;k}x^{\de}_{\ ;m}h^m_i+x^{\al}_{\ ;l}x^{\be}_{\ ;i}x^{\g}_{\ ;k}x^{\de}_{\ ;m}h^m_r}g^{rj}\\
                &\hp{=}+\fr{2n}{F^2}F^{kl}\bar{R}_{\al\be\g\de}x^{\al}_{\ ;l}x^{\be}_{\ ;r}x^{\g}_{\ ;i}x^{\de}_{\ ;m}h^m_k g^{rj}+\fr{n}{F^2}F^{kl}\bar{R}_{\al\be\g\de}\nu^{\al}x^{\be}_{\ ;k}x^{\g}_{\ ;l}\nu^{\de}h_{i}^j\\
                &\hp{=}+\fr{n}{F}\bar{R}_{\al\be\g\de}\nu^{\al}x^{\be}_{\ ;i}\nu^{\g}x^{\de}_{\ ;m}g^{mj}-\fr{n}{F^2}F^{kl}\bar{R}_{\al\be\g\de;\e}\nu^{\al}x^{\be}_{\ ;k}x^{\g}_{\ ;l}x^{\de}_{\ ;i}x^{\e}_{\ ;m}g^{mj}\\
                &\hp{=}-\fr{n}{F^2}F^{kl}\bar{R}_{\al\be\g\de;\e}\nu^{\al}x^{\be}_{\ ;i}x^{\g}_{\ ;k}x^{\de}_{\ ;m}x^{\e}_{\ ;l} g^{mj}.}
\end{lemma}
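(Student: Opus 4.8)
The plan is to derive \eqref{ev-h} by combining the general evolution equation \eqref{EV-W} for $h^j_i$ with the parabolic operator $\cL$ from \cref{Ev-Eq}, converting all second derivatives of $\cF$ into geometric quantities. First I would start from \eqref{EV-W}, namely
\[
\dot h^j_i=-\cF^{\ \ j}_{;i}-\cF h^j_kh^k_i-\cF\bar R_{\al\be\g\de}x^\al_{\ ;i}\nu^\be\nu^\g x^\de_{\ ;k}g^{kj},
\]
and expand $\cF=\tfrac nF-\tfrac u{\la'}$. The term $-\bigl(\tfrac nF\bigr)^{\ \ j}_{;i}$ produces, after using $F_{;ij}=F^{kl}h_{kl;ij}+F^{kl,rs}h_{kl;i}h_{rs;j}$ and the Simons-type commutation identity for $h_{kl;ij}$, the leading elliptic term $\tfrac n{F^2}F^{kl}h^j_{i;kl}$ together with the gradient term $-\tfrac{2n}{F^3}F_{;i}F^{\ j}_{;}$, the Hessian-of-curvature term $\tfrac n{F^2}F^{kl,rs}h_{kl;i}h^{\ j}_{rs;}$, and a collection of curvature contractions coming from commuting covariant derivatives past $\bar R$. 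The term $\bigl(\tfrac u{\la'}\bigr)^{\ \ j}_{;i}$ I would expand using \eqref{eq-u} for $u_{;ij}$ and the warped-product identities $\la'_{;i}=\la'' r_{;i}$, $\la'_{;ij}=\la''' r_{;i}r_{;j}+\la'' r_{;ij}$, substituting \eqref{graph-h} for $r_{;ij}$; this is exactly the source of the $\tfrac{\la''}{\la'^2}(u_{;i}r^{\ j}_{;}+r_{;i}u^{\ j}_{;})$ term, the $r_{;i}r^{\ j}_{;}$ coefficient, the $\de^j_i$ term, and part of the $h^j_i$ coefficient.

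Next I would account for the first-order part of $\cL$, i.e. add $-\tfrac\la{\la'}r^{\ k}_{;}h^j_{i;k}$; the point is that the combination $\tfrac n{F^2}F^{kl}h^j_{i;kl}+\tfrac\la{\la'}r^{\ k}_{;}h^j_{i;k}$ is precisely $\cL$ applied to $h^j_i$ minus $\dot h^j_i$, so after moving everything to the left we get $\cL h^j_i$ on one side and the claimed right-hand side on the other. The curvature terms require care: the Ricci identity \eqref{Ricci} must be applied to commute derivatives in $h_{kl;ij}$, producing terms $R_{imk}{}^l$-type contractions which via the Gauss equation \eqref{GE} split into an intrinsic part (absorbed into the $h^j_kh^k_i$-type algebra, yielding the $\tfrac n{F^2}F^{kl}h_{rk}h^r_lh^j_i$ and $-\tfrac{2n}Fh^j_kh^k_i$ terms together with the $\bar R$ contractions along tangential directions) and an ambient $\bar R$ part. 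The derivative terms $\bar R_{\al\be\g\de;\e}$ arise from commuting $\bar\n$ through the Codazzi equation \eqref{Codazzi}. The $(1+\tfrac{u\la''}{\la'^2v})h^j_i$ coefficient is assembled from three contributions: the $+\la'h_{ij}/\la'=h^j_i$ from \eqref{eq-u}, the $\tfrac{u\la''}{\la'^2}$ piece picked up when $r_{;ij}$ in the $\la'_{;ij}$ expansion is replaced via \eqref{graph-h} (which contributes $v^{-1}$ through the relation $v^{-1}h_{ij}=-r_{;ij}+\bar h_{ij}-\ldots$), and bookkeeping with $u=\la/v$.

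The main obstacle will be the careful bookkeeping of the numerous ambient curvature terms: keeping track of which index of $\bar R_{\al\be\g\de}$ is contracted with $\nu$ versus a tangent vector $x^\de_{\ ;k}$ after each application of Ricci and Codazzi, and ensuring the symmetrizations and the factor-of-two terms (the $\tfrac{2n}{F^2}F^{kl}\bar R x^\al_{;l}x^\be_{;r}x^\g_{;i}x^\de_{;m}h^m_k$ term in particular) come out correctly. None of the individual manipulations is deep — they are all applications of the Gauss, Codazzi, Ricci identities and the warped-product formulas already recorded — but the sheer number of terms and the need to match signs (recalling the paper's sign convention on $\bar R$ differs from \cite{Gerhardt:/2006}) makes this the delicate part. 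I would organize the computation by first collecting all non-curvature terms to verify the coefficients of $h^j_i$, $\de^j_i$, $r_{;i}r^{\ j}_{;}$, and the mixed gradient terms, and only then systematically tabulate the curvature contractions, checking them against the known special cases (e.g. the inverse mean curvature flow computation in \cite[Lemma~2.4.1]{Gerhardt:/2006}) as a consistency test.
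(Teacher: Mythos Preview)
Your proposal is correct and follows essentially the same route as the paper: expand $-\cF_{;ij}$ using \eqref{eq-u} and \eqref{graph-h}, commute $F^{kl}h_{kl;ij}$ to $F^{kl}h_{ij;kl}$ via Codazzi and the Ricci identities (what you call the Simons-type identity), and identify the first-order transport term $\tfrac{\la}{\la'}r_{;}^{\ k}h_{ij;k}$ arising from $u_{;ij}/\la'$ with the corresponding piece of $\cL$. The only organizational difference is that the paper works with the covariant form $h_{ij}$ starting from \eqref{Ev-h} and converts to $h^j_i$ at the very end via $\dot h^j_i=-2\cF h^j_kh^k_i+g^{ik}\dot h_{kj}$, whereas you propose to start directly from \eqref{EV-W}; this is immaterial.
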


\pf{We use \eqref{Ev-h} and calculate $-\cF_{;ij}$ step by step.
We use
\eq{\cF=\fr{n}{F}-\fr{u}{\la'},\quad -\cF_{;i}=\fr{n}{F^2}F_{;i}+\fr{u_{;i}}{\la'}-\fr{u\la''r_{;i}}{{\la'}^2},}
\eqref{eq-u} as well as \eqref{graph-h}, to deduce
\eq{\label{Ev-W-B-1}-\cF_{;ij}&=-\fr{2n}{F^3}F_{;i}F_{;j}+\fr{n}{F^2}F_{;ij}+\fr{u_{;ij}}{\la'}-\fr{\la''}{\la'^2}\br{u_{;i}r_{;j}+r_{;i}u_{;j}}\\
				&\hp{=}-u\br{\fr{\la'''}{\la'^2}-\fr{2\la''^2}{\la'^3}}r_{;i}r_{;j}-\fr{u\la''}{\la'^2}r_{;ij}\\
                &=-\fr{2n}{F^3}F_{;i}F_{;j}+\fr{n}{F^2}F_{;ij}-\fr{\la''}{\la'^2}\br{u_{;i}r_{;j}+r_{;i}u_{;j}}\\
                &\hp{=}-u\br{\fr{\la'''}{\la'^2}-\fr{2\la''^2}{\la'^3}}r_{;i}r_{;j}+h_{ij}+\fr{\la}{\la'}r_{;k}{h_{ij;}}^k-\fr{u}{\la'}h_{ik}h^k_j\\
                &\hp{=}-\fr{\la}{\la'}\bar{R}_{\al\be\g\de}\nu^{\al}x^{\be}_{\ ;i}x^{\g}_{\ ;m}x^{\de}_{\ ;j}{r_{;}}^m+\fr{u\la''}{\la'^2}\br{v^{-1}h_{ij}-\fr{\la'}{\la}g_{ij}+\fr{\la'}{\la}r_{;i}r_{;j}}\\
                &=-\fr{2n}{F^3}F_{;i}F_{;j}+\fr{n}{F^2}F_{;ij}-\fr{\la''}{\la'^2}\br{u_{;i}r_{;j}+r_{;i}u_{;j}}+\fr{\la}{\la'}r_{;k}{h_{ij;}}^k\\
                &\hp{=}-\fr{u}{\la'^2}\br{\la'''-\fr{2\la''^2}{\la'}-\fr{\la''\la'}{\la}}r_{;i}r_{;j}+\br{1+\fr{u\la''}{\la'^2 v}}h_{ij}-\fr{u}{\la'}h_{ik}h^k_j\\
                &\hp{=}-\fr{u\la''}{\la'\la}g_{ij}-\fr{\la}{\la'}\bar{R}_{\al\be\g\de}\nu^{\al}x^{\be}_{\ ;i}x^{\g}_{\ ;m}x^{\de}_{\ ;j}{r_{;}}^m.}
We have to transform $F_{;ij}.$ Using the Codazzi equation \eqref{Codazzi}and the Ricci identities \eqref{Ricci}, we obtain
\eq{F_{;ij}&=F^{kl,rs}h_{kl;i}h_{rs;j}+F^{kl}h_{kl;ij}\\
				&=F^{kl,rs}h_{kl;i}h_{rs;j}+F^{kl}h_{ki;lj}-F^{kl}\br{\bar{R}_{\al\be\g\de}\nu^{\al}x^{\be}_{\ ;k}x^{\g}_{\ ;l}x^{\de}_{\ ;i}}_{;j}\\
                &=F^{kl,rs}h_{kl;i}h_{rs;j}+F^{kl}h_{ki;jl}+F^{kl}{R_{ljk}}^ah_{ai}+F^{kl}{R_{lji}}^a h_{ka}\\
                &\hp{=}-F^{kl}\br{\bar{R}_{\al\be\g\de}\nu^{\al}x^{\be}_{\ ;k}x^{\g}_{\ ;l}x^{\de}_{\ ;i}}_{;j}\\
                &=F^{kl,rs}h_{kl;i}h_{rs;j}+F^{kl}{R_{ljk}}^ah_{ai}+F^{kl}{R_{lji}}^a h_{ka}+F^{kl}h_{ij;kl}\\
                &\hp{=}-F^{kl}\br{\bar{R}_{\al\be\g\de}\nu^{\al}x^{\be}_{\ ;k}x^{\g}_{\ ;l}x^{\de}_{\ ;i}}_{;j}-F^{kl}\br{\bar{R}_{\al\be\g\de}\nu^{\al}x^{\be}_{\ ;i}x^{\g}_{\ ;k}x^{\de}_{\ ;j}}_{;l}}
 Differentiating the big brackets by the product rule gives, using the Weingarten equation \eqref{Weingarten} and the Gauss equation \eqref{GE}
 \eq{F_{;ij}&=F^{kl}h_{ij;kl}+F^{kl,rs}h_{kl;i}h_{rs;j}\\
                &\hp{=}+F^{kl}(h_{la}h_{jk}-h_{lk}h_{ja}+\bar R_{\al\be\g\de}x^{\al}_{\ ;l}x^{\be}_{\ ;j}x^{\g}_{\ ;k}x^{\de}_{\ ;a})h_{i}^a\\   
                &\hp{=}+F^{kl}(h_{la}h_{ji}-h_{li}h_{ja}+\bar R_{\al\be\g\de}x^{\al}_{\ ;l}x^{\be}_{\ ;j}x^{\g}_{\ ;i}x^{\de}_{\ ;a})h_{k}^a\\
             &\hp{=}-F^{kl}\bar{R}_{\al\be\g\de;\e}\nu^{\al}x^{\be}_{\ ;k}x^{\g}_{\ ;l}x^{\de}_{\ ;i}x^{\e}_{\ ;j}-F^{kl}\bar{R}_{\al\be\g\de}x^{\al}_{\ ;m}x^{\be}_{\ ;k}x^{\g}_{\ ;l}x^{\de}_{\ ;i}h^m_j\\
                &\hp{=}+F^{kl}\bar{R}_{\al\be\g\de}\nu^{\al}x^{\be}_{\ ;k}\nu^{\g}x^{\de}_{\ ;i}h_{lj}+F^{kl}\bar{R}_{\al\be\g\de}\nu^{\al}x^{\be}_{\ ;k}x^{\g}_{\ ;l}\nu^{\de}h_{ij} \\
                &\hp{=}-F^{kl}\bar{R}_{\al\be\g\de;\e}\nu^{\al}x^{\be}_{\ ;i}x^{\g}_{\ ;k}x^{\de}_{\ ;j}x^{\e}_{\ ;l}-F^{kl}\bar{R}_{\al\be\g\de}x^{\al}_{\ ;m}x^{\be}_{\ ;i}x^{\g}_{\ ;k}x^{\de}_{\ ;j}h^m_l \\
                &\hp{=}+F^{kl}h_{kl}\bar{R}_{\al\be\g\de}\nu^{\al}x^{\be}_{\ ;i}\nu^{\g}x^{\de}_{\ ;j}+F^{kl}\bar{R}_{\al\be\g\de}\nu^{\al}x^{\be}_{\ ;i}x^{\g}_{\ ;k}\nu^{\de}h_{jl}}
and after some rearranging, using the homogeneity of $F$,
\eq{\label{Ev-W-B-3}  F_{;ij} &=F^{kl}h_{ij;kl}+F^{kl,rs}h_{kl;i}h_{rs;j}+F^{kl}h_{rk}h^r_l h_{ij}-F h_{ik}h^k_j\\
                &\hp{=}+F^{kl}\bar{R}_{\al\be\g\de}\br{x^{\al}_{\ ;l}x^{\be}_{\ ;j}x^{\g}_{\ ;k}x^{\de}_{\ ;m}h^m_i+x^{\al}_{\ ;l}x^{\be}_{\ ;i}x^{\g}_{\ ;k}x^{\de}_{\ ;m}h^m_j}\\
                &\hp{=}+2F^{kl}\bar{R}_{\al\be\g\de}x^{\al}_{\ ;l}x^{\be}_{\ ;j}x^{\g}_{\ ;i}x^{\de}_{\ ;m}h^m_k+F^{kl}\bar{R}_{\al\be\g\de}\nu^{\al}x^{\be}_{\ ;k}x^{\g}_{\ ;l}\nu^{\de}h_{ij}\\
                &\hp{=}+F\bar{R}_{\al\be\g\de}\nu^{\al}x^{\be}_{\ ;i}\nu^{\g}x^{\de}_{\ ;j}-F^{kl}\bar{R}_{\al\be\g\de;\e}\nu^{\al}x^{\be}_{\ ;k}x^{\g}_{\ ;l}x^{\de}_{\ ;i}x^{\e}_{\ ;j}\\
                &\hp{=}-F^{kl}\bar{R}_{\al\be\g\de;\e}\nu^{\al}x^{\be}_{\ ;i}x^{\g}_{\ ;k}x^{\de}_{\ ;j}x^{\e}_{\ ;l}.
                }
From \eqref{Ev-h}, inserting \eqref{Ev-W-B-1}, we get 

\eq{\dot{h}_{ij}&=-\cF_{;ij}+\cF h_{ik}h^k_j-\cF\bar{R}_{\al\be\g\de}x^{\al}_{\ ;i}\nu^{\be}\nu^{\g}x^{\de}_{\ ;j}\\
				&=-\fr{2n}{F^3}F_{;i}F_{;j}+\fr{n}{F^2}F_{;ij}-\fr{\la''}{\la'^2}\br{u_{;i}r_{;j}+r_{;i}u_{;j}}+\fr{\la}{\la'}r_{;k}{h_{ij;}}^k\\
                &\hp{=}-\fr{u}{\la'^2}\br{\la'''-\fr{2\la''^2}{\la'}-\fr{\la''\la'}{\la}}r_{;i}r_{;j}+\br{1+\fr{u\la''}{\la'^2 v}}h_{ij}-\fr{u}{\la'}h_{ik}h^k_j-\fr{u\la''}{\la'\la}g_{ij}\\
                &\hp{=}-\fr{\la}{\la'}\bar{R}_{\al\be\g\de}\nu^{\al}x^{\be}_{\ ;i}x^{\g}_{\ ;m}x^{\de}_{\ ;j}{r_{;}}^m+\br{\fr{n}{F}-\fr{u}{\la'}} h_{ik}h^k_j\\
                &\hp{=}-\br{\fr{n}{F}-\fr{u}{\la'}}\bar{R}_{\al\be\g\de}x^{\al}_{\ ;i}\nu^{\be}\nu^{\g}x^{\de}_{\ ;j}.}
Inserting \eqref{Ev-W-B-3} into this equation gives
\eq{\dot{h}_{ij}&=\fr{n}{F^2}F^{kl}h_{ij;kl}+\fr{\la}{\la'}r_{;k}{h_{ij;}}^k-\fr{2n}{F^3}F_{;i}F_{;j}+\fr{n}{F^2}F^{kl,rs}h_{kl;i}h_{rs;j}\\
		&\hp{=}-\fr{\la''}{\la'^2}\br{u_{;i}r_{;j}+r_{;i}u_{;j}}    -\fr{u}{\la'^2}\br{\la'''-\fr{2\la''^2}{\la'}-\fr{\la''\la'}{\la}}r_{;i}r_{;j}+\br{1+\fr{u\la''}{\la'^2 v}}h_{ij}\\
		&\hp{=}-\fr{2u}{\la'}h_{ik}h^k_j-\fr{u\la''}{\la'\la}g_{ij}-\fr{\la}{\la'}\bar{R}_{\al\be\g\de}\nu^{\al}x^{\be}_{\ ;i}x^{\g}_{\ ;m}x^{\de}_{\ ;j}{r_{;}}^m\\
                &\hp{=}-\br{\fr{n}{F}-\fr{u}{\la'}}\bar{R}_{\al\be\g\de}x^{\al}_{\ ;i}\nu^{\be}\nu^{\g}x^{\de}_{\ ;j}+\fr{n}{F^2}F^{kl}h_{rk}h^r_l h_{ij}\\
                &\hp{=}+\fr{n}{F^2}F^{kl}\bar{R}_{\al\be\g\de}\br{x^{\al}_{\ ;l}x^{\be}_{\ ;j}x^{\g}_{\ ;k}x^{\de}_{\ ;m}h^m_i+x^{\al}_{\ ;l}x^{\be}_{\ ;i}x^{\g}_{\ ;k}x^{\de}_{\ ;m}h^m_j}\\
                &\hp{=}+\fr{2n}{F^2}F^{kl}\bar{R}_{\al\be\g\de}x^{\al}_{\ ;l}x^{\be}_{\ ;j}x^{\g}_{\ ;i}x^{\de}_{\ ;m}h^m_k+\fr{n}{F^2}F^{kl}\bar{R}_{\al\be\g\de}\nu^{\al}x^{\be}_{\ ;k}x^{\g}_{\ ;l}\nu^{\de}h_{ij}\\
                &\hp{=}+\fr{n}{F}\bar{R}_{\al\be\g\de}\nu^{\al}x^{\be}_{\ ;i}\nu^{\g}x^{\de}_{\ ;j}-\fr{n}{F^2}F^{kl}\bar{R}_{\al\be\g\de;\e}\nu^{\al}x^{\be}_{\ ;k}x^{\g}_{\ ;l}x^{\de}_{\ ;i}x^{\e}_{\ ;j}\\
                &\hp{=}-\fr{n}{F^2}F^{kl}\bar{R}_{\al\be\g\de;\e}\nu^{\al}x^{\be}_{\ ;i}x^{\g}_{\ ;k}x^{\de}_{\ ;j}x^{\e}_{\ ;l}.}
Using
\eq{\dot{h}^i_j=\dot{g}^{ik}h_{kj}+g^{ik}\dot{h}_{kj}=-g^{il}\dot{g}_{lm}g^{mk}h_{kj}+g^{ik}\dot{h}_{kj}=2\br{\fr{u}{\la'}-\fr{n}{F}}h^i_k h^k_{j}+g^{ik}\dot{h}_{kj}}
gives the result.
}

In particular, when the ambient space $N$ is a space form of sectional curvature $K_N$, then 
\eq{\bar R_{\al\be\g\de}=K_N(\bar g_{\al\de}\bar g_{\be\g}-\bar g_{\al\g}\bar g_{\be\de})} and
\eq{\la''=-K_N\la, \quad \la'''=-K_N\la'=\frac{\la''\la'}{\la}} and \eqref{ev-h} reduces to
\eq{\label{Ev-h-spaceform}\cL h^j_i&=-\fr{2n}{F^3}F_{;i}{F_{;}}^j+\fr{n}{F^2}F^{kl,rs}h_{kl;i}{h_{rs;}}^j-\fr{\la''}{\la'^2}\br{u_{;i}{r_{;}}^j+r_{;i}{u_{;}}^j}\\  
				&\hp{=}-\fr{u}{\la'^2}\br{\la'''-\fr{2\la''^2}{\la'}-\fr{\la''\la'}{\la}}r_{;i}{r_{;}}^j+\br{1+\fr{u\la''}{\la'^2 v}}h^j_i\\
		&\hp{=}-\fr{u\la''}{\la'\la}\de^j_i-\br{\fr{n}{F}-\fr{u}{\la'}}K_N\de_i^j+\fr{n}{F^2}F^{kl}h_{rk}h^r_l h_{i}^j-\fr{2n}{F}h_{k}^jh^k_i\\
                &\hp{=}+\fr{n}{F^2}F^{kl}K_N(h_{il}\de_{k}^j+h_l^j  g_{ik}-2g_{kl}h_i^j)\\
                &\hp{=}+\fr{2n}{F^2}F^{kl}K_N(h_{kl}\de_i^j-g_{li}h_k^j)+K_N\fr{n}{F^2}F^{kl}g_{kl}h_{i}^j-K_N\fr{n}{F}\de_i^j
                \\&=-\fr{2n}{F^3}F_{;i}{F_{;}}^j+\fr{n}{F^2}F^{kl,rs}h_{kl;i}{h_{rs;}}^j+K_N\fr{\la}{\la'^2}\br{u_{;i}{r_{;}}^j+r_{;i}{u_{;}}^j}\\  
				&\hp{=}+K^2_N\fr{2u\la^2}{\la'^3}r_{;i}{r_{;}}^j+\br{1-K_N\fr{u^2}{\la'^2}+\fr{n}{F^2}F^{kl}h_{rk}h^r_l-\fr{n}{F^2}K_NF^{kl}g_{kl}}h^j_i\\
        &\hp{=}+2\fr{u}{\la'}K_N\de_i^j-\fr{2n}{F}h_{k}^jh^k_i.
                }

\medskip

\section{Upper bounds for the curvature function}\label{sec:Bound-F}
In this section we  show that the curvature function $F$ is bounded from above along the flow \eqref{Flow} in the case $F=n\frac{H_{k}}{H_{k-1}}$ for very general $\la$. For this paper, we only apply it in the case $\la=\sin$, but due to its generality it might be of use in further situations.

\begin{prop}\label{F-bound}
Let $a,b\in\R$ and $(N,\bar g)$ be the warped space $((a,b)\x \bbS^n,dr^2+\la^2(r)\s)$ with $\la,\la'>0$. Let 
\eq{F=n\fr{H_k}{H_{k-1}}}
and let $x_0(M)$be the embedding of a closed $n$-dimensional manifold $M$ into $N$, such that $x_0(M)$ is a graph over the domain $\bbS^n$ and such that $\ka\in \G_k$ for all $n$-tupels of principal curvatures along $x_0(M)$. Then along any solution $x$ of \eqref{Flow} with initial embedding $x_0$ there exists a constant $c=c(n,k,\sup r_0,\inf r_0,\la)$, such that
\eq{F\leq c.}
\end{prop}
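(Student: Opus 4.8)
I would prove the bound by the parabolic maximum principle, applied to $F$ (or, as it turns out to be necessary, to $F$ multiplied by an auxiliary function of the radial coordinate $r$ and the support function $u$), using the evolution equation \eqref{EV-F}.

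\textbf{Zeroth order estimate.} First I would record that the coordinate slices $\{r=\mrm{const}\}$ are stationary solutions of \eqref{Flow}: on such a slice $v=1$, $u=\la$, $h^j_i=\tfrac{\la'}{\la}\de^j_i$, $F=n\tfrac{\la'}{\la}$, hence $\cF=\tfrac nF-\tfrac u{\la'}=0$. Since the graph function $r(t,\cdot)$ satisfies a quasilinear parabolic equation on $\bbS^n$ (cf. \eqref{Ev-r} for $\cL r$, with $\tfrac n{F^2}F^{ij}$ elliptic by monotonicity of $F$), comparison with the constant sub/supersolutions $\inf r_0$ and $\sup r_0$ gives $\inf r_0\le r(t,\cdot)\le\sup r_0$ for all $t$. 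Consequently $\la,\la^{-1},\la',\la'^{-1},\la'',\la'''$ and the ambient curvature of the warped product are bounded by constants depending only on $\la,\inf r_0,\sup r_0$; moreover $u=\la/v\le\la$ is bounded above and $\|\n r\|^2=1-v^{-2}<1$.

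\textbf{Core estimate.} In \eqref{EV-F} the two good terms are $-\tfrac{2n}{F^3}F^{ij}F_{;i}F_{;j}\le0$ and $-\tfrac nF\br{F^{ij}h_{jk}h^k_i-\tfrac1nF^2}$; for $F=n\tfrac{H_k}{H_{k-1}}$ the latter is $\le0$ because $F$ is concave and $1$-homogeneous and the Newton--Maclaurin inequalities yield $F^{ij}h_{jk}h^k_i\ge\tfrac1nF^2$, and the same inequalities bound $\sum_iF_i=F^{ij}g_{ij}$ from above. All remaining terms of \eqref{EV-F} are of lower order in $h$. The only genuinely dangerous one is the cross term $-\tfrac{2\la''}{\la'^2}F^{ij}u_{;i}r_{;j}$; here I would use the identity $u_{;k}=\la\,h_k^l r_{;l}$ (immediate from $Xu=\bar g(\la\del_r,A(X))$ and $\bar g(\la\del_r,e_l)=\la r_{;l}$) to rewrite it as $-\tfrac{2\la''\la}{\la'^2}F^{ij}h_i^l r_{;l}r_{;j}$ and then absorb it into $-\tfrac nF F^{ij}h_{jk}h^k_i$ by Cauchy--Schwarz and Young's inequality, using $\|\n r\|^2<1$ and the bound on $\sum_iF_i$. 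The terms $\tfrac{u^2\la''}{\la\la'^2}F$, $-\tfrac{u\la''}{\la\la'}F^{ij}g_{ij}$, $\tfrac u{\la'^2}\br{\tfrac{\la'\la''}\la-\la'''+\tfrac{2\la''^2}{\la'}}F^{ij}r_{;i}r_{;j}$ and the two curvature terms are then estimated by $C(1+F)$ using the zeroth order estimate, $F^{ij}r_{;i}r_{;j}\le(\max_iF_i)\|\n r\|^2$, boundedness of $\sum_iF_i$, and $|\cF|\le\tfrac nF+\tfrac u{\la'}$.

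\textbf{The main obstacle.} The difficulty is that the good term $-\tfrac nF\br{F^{ij}h_{jk}h^k_i-\tfrac1nF^2}$ degenerates like $F^{-1}$ for large $F$, and after only invoking $F^{ij}h_{jk}h^k_i\ge\tfrac1nF^2$ it leaves behind a term $+F$ of exactly the same linear order as the destabilising lower-order terms (note also that $F^{ij}h_{jk}h^k_i$ is quadratic in $h$, so no term of the form $-\varepsilon F^2$ can be extracted). A bare maximum principle for $F$ therefore gives only $\cL F\le C(1+F)$, which controls $F$ on finite time intervals (enough for short-time continuation) but not uniformly. Closing this gap is the real content of the proposition: one must choose the test function carefully --- a suitable power of $F$, or $F$ weighted by a monotone function of $r$ and $u$ tailored so that the $\la''$-terms and the $u$--$r$ cross term are compensated rather than merely estimated --- so that $\cL\Phi\le0$ whenever $\Phi$ exceeds a threshold depending only on $n,k,\inf r_0,\sup r_0,\la$; the parabolic maximum principle then yields the asserted bound.
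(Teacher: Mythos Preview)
Your outline is on target --- barriers for $r$, the evolution equation \eqref{EV-F}, the bound $F^{ij}g_{ij}\le C(n,k)$ and $F^{ij}h_{jk}h^k_i\ge\tfrac1nF^2$ for $F=nH_k/H_{k-1}$, and the recognition that a bare maximum principle for $F$ only yields $\cL F\le C(1+F)$ --- all match the paper. But your final paragraph stops exactly where the content of the proof begins: you say one must ``choose the test function carefully'' without producing one or verifying that it works. That is the gap.

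Concretely, the paper takes $\Phi=\log F+\tfrac{u}{\la}+\al r$ with $\al$ large. Two mechanisms close the argument and neither is visible in your sketch. First, the term $\tfrac{u}{\la}$ is chosen so that $\cL(u/\la)$ contains $+\tfrac{n}{F^2}\bigl(F^{ii}h_{ii}^2-\tfrac1nF^2\bigr)\bar g(\del_r,\nu)$, which combines with the corresponding term in $\cL\log F$ to give $-\tfrac{n}{F^2}\bigl(F^{ii}h_{ii}^2-\tfrac1nF^2\bigr)\bigl(1-\bar g(\del_r,\nu)\bigr)\le 0$; this is a genuine cancellation, not an absorption via Young's inequality. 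Second, the $\al r$ term contributes, through \eqref{Ev-r}, a constant negative piece $-\al\tfrac{\la}{\la'}$ which is the only term that can beat the $O(1)$ remainders once $F$ is large. The cross term $\tfrac{C}{F}F^{ii}|u_{;i}||r_{;i}|$ is then handled not by absorbing it into the (now discarded) quadratic term, but by substituting the critical-point relation $\n\log F=-\tfrac1\la\n u+\tfrac{u\la'}{\la^2}\n r-\al\n r$ into $-\tfrac{n}{F^2}F^{ii}(\log F)_{;i}^2$ and completing the square; this produces errors of size $C+\al C/F+C\al^2/F^2$, which are dominated by $-\al\la/\la'$ once $\al$ is fixed large and $F\ge\al$. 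Your Young-inequality absorption of the $u_{;i}r_{;j}$ term into $-\tfrac nF F^{ij}h_{jk}h^k_i$ cannot work for this purpose, since it throws back a term of order $F$ and you have no negative term of that order available.
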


\begin{rem}\label{Barriers}
Note that under the hypothesis of \cref{F-bound} we have
\eq{r\leq \sup r_0,\q r\geq \inf r_0}
along the flow, due to the maximum principle. This assertion also holds for arbitrary monotone curvature functions $F$. 
\end{rem}
Now we prove \cref{F-bound}.
\pf{
Consider the test function 
\eq{\Phi= \log F+\frac{u}{\la}+\al r} with a large constant $\al$ to be determined.
Assume $\Phi$ attains its maximum at $p$. By a suitable choice of coordinate we can assume $g_{ij}|_p=\delta_{ij}$, $h_{ij}|_p$ is diagonal and in turn $F^{ij}$ is diagonal at $p$. Assume $F|_p\ge C$ for some sufficient large constant $C$. In the following, we compute at $p$.

From \Cref{Ev-Eq} we deduce 
\eq{\mathcal{L}\left(\frac{u}{\la}\right)&=\frac{1}{\la}\mathcal{L}u-\frac{u\la'}{\la^2}\mathcal{L}r+\frac{n}{F^2}\frac{2\la'}{\la^2}F^{ij}u_{;i}r_{;j}-\frac{n}{F^2}\frac{u}{\la^3}\left(2\la'^2-\la\la''\right)F^{ij}r_{;i}r_{;j}
\\&= \fr{n}{F^2}\br{F^{ij}h_{ik}h^k_j-\fr 1n F^2}\bar g(\partial_r, \nu)-\fr{\la''}{\la'^2}\|\n r\|^2u
\\&\quad -\frac{u\la'}{\la^2}\left(\frac{2n}{vF}-\frac{\la}{\la'}- \fr{n\la'}{\la F^2}F^{ij}g_{ij}+\fr{n\la'}{\la F^2}F^{ij}r_{;i}r_{;j}\right)
\\&\quad +\frac{n}{F^2}\frac{2\la'}{\la^2}F^{ij}u_{;i}r_{;j}-\frac{n}{F^2}\frac{u}{\la^3}\left(2\la'^2-\la\la''\right)F^{ij}r_{;i}r_{;j}\\&\quad +\fr{n}{F^2}F^{ij}\bar{R}_{\al\be\g\de}\nu^{\al}x^{\be}_{\ ;i}x^{\g}_{\ ;m}x^{\de}_{\ ;j}{r_{;}}^m
\\&\le \fr{n}{F^2}\br{F^{ii}h_{ii}^2-\fr 1n F^2}\bar g(\partial_r, \nu)
 +\frac{C}{F}F^{ii}|u_{;i}||r_{;i}|+ C+\sum_i CF^{ii}.
}

We also have
\eq{\mathcal{L}\log F&=-\frac{n}{F^2}F^{ij}(\log F)_{;i}(\log F)_{;j}-\frac{n}{F^2}\left(F^{ij}h_{jk}h^k_i-\frac1n F^2\right)+\frac{u^2\la''}{\la\la'^2}
\\&\quad -\frac{u\la''}{\la\la'F}F^{ij}g_{ij}+\frac{u}{\la'^2F}\left(\frac{\la'\la''}{\la}-\la'''+\frac{2{\la''}^2}{\la'}\right)F^{ij}r_{;i}r_{;j}-\frac{2\la''}{\la'^2F}F^{ij}u_{;i} r_{;j}
\\&\quad -\frac{\la}{\la'F}F^{ij}\bar{R}_{\al\be\g\de}\nu^{\al}x^{\be}_{\ ;i}x^{\g}_{\ ;m}x^{\de}_{\ ;j}{r_{;}}^m-\frac 1F\left(\frac{n}{F}-\frac{u}{\la'}\right)F^{ij}\bar{R}_{\al\be\g\de}x^{\al}_{\ ;i}\nu^{\be}\nu^{\g}x^{\de}_{\ ;j}
\\&\le -\frac{n}{F^2}F^{ii}(\log F)_{;i}(\log F)_{;i}-\frac{n}{F^2}\left(F^{ii}h_{ii}^2-\frac1n F^2\right)\\&\quad+\frac{C}{F}F^{ii}|u_{;i}||r_{;i}|+C+\sum_i CF^{ii}.
}

Thus 
\eq{\mathcal{L}\Phi&=\mathcal{L}\log F+\mathcal{L}\left(\frac{u}{\la}\right)+ \al \mathcal{L}r
\\&\le -\frac{n}{F^2}F^{ii}(\log F)_{;i}(\log F)_{;i}-\frac{n}{F^2}\left(F^{ii}h_{ii}^2-\frac1n F^2\right)(1-\bar g(\partial_r, \nu))
\\&\quad -\al \frac{\la}{\la'}+\al \frac{C}{F}+\al\frac{C}{F}\sum_i F^{ii} +\frac{C}{F}F^{ii}|u_{;i}||r_{;i}|+C+CF^{ii}.
}

A calculation using the Newton-MacLaurin inequalities gives for our special $F$: 
\eq{F^{ii}h_{ii}^2-\frac1n F^2\ge 0} and
\eq{F^{i}_i\le C(n,k),}
see \cite[Prop.~2.2]{HuiskenSinestrari:09/1999} for useful formulas for this calculation.

Thus 
\eq{\mathcal{L}\Phi&\le -\frac{n}{F^2}F^{ii}(\log F)_{;i}(\log F)_{;i}-\al \frac{\la}{\la'}+\al \frac{C}{F} +\frac{C}{F}F^{ii}|u_{;i}||r_{;i}|+C.
}

From the maximal property of $\Phi$ at $p$, we have 
\eq{\n \log F=-\frac{1}{\la}\n u+\frac{u\la'}{\la^2}\n r-\al \n r.}

Therefore
\eq{0\le \cL\Phi &\le -\frac{n}{F^2}F^{ii}\left(-\frac{1}{\la}u_{;i}+\frac{u\la'}{\la^2}r_{;i}-\al r_{;i}\right)^2-\al \frac{\la}{\la'}+\al \frac{C}{F} +\frac{C}{F}F^{ii}|u_{;i}||r_{;i}|+C
\\&\le -\frac{n}{2F^2\la^2}F^{ii}{u_{;i}}^2+ \frac{n}{F^2}F^{ii}\left(\frac{u\la'}{\la^2}r_{;i}-\al r_{;i}\right)^2+\frac{C}{F}F^{ii}|u_{;i}||r_{;i}|\\
	&\hp{=}-\al \frac{\la}{\la'}+\al \frac{C}{F}+C
\\&\le -\frac{n}{2F^2\la^2}F^{ii}\left(|u_{;i}|-\frac{CF\la^2}{n}|r_{;i}|\right)^2-\al \frac{\la}{\la'}+\al \frac{C}{F}+C+C\frac{\al^2}{F^2}
\\ &\le -\al \frac{\la}{\la'}+\al \frac{C}{F}+C+C\frac{\al^2}{F^2}.
}

Assume $F|_p\ge \al$. Then by choosing $\al$ large enough, we get the RHS of above inequality is negative, a contradiction. Therefore, $F|_p\le \al$ for our choice of $\al$ and in turn $\Phi|_p$ is bounded. Since $\Phi$ attains its maximum at $p$, we conclude that $F$ is bounded from above.
}

\medskip 

\section{Gradient estimates}\label{sec:Grad-Bound}
In this section we  show that the graph function has a uniform $C^1$ bound along the flow \eqref{Flow} for very general $F$ and $\la$.

\begin{prop}\label{grad-bound}
Let $a,b\in\R$ and $(N,\bar g)$ be the warped space $((a,b)\x \bbS^n,dr^2+\la^2(r)\s)$ with $\la,\la'>0$. Let $F\in C^{\8}(\G)$ be a $1$-homogeneous and strictly monotone curvature function and let $x_0(M)$ be the embedding of a closed $n$-dimensional manifold $M$ into $N$, such that $x_0(M)$ is a graph over the domain $\bbS^n$ with graph function $r$ and such that $\ka\in \G$ for all $n$-tuples of principal curvatures along $x_{0}(M)$. 
Then along any solution $x$ of \eqref{Flow} with initial embedding $x_0$, there exists a constant $c=c(n,\sup r_0,\inf r_0,\la)$, such that
\eq{|\hat{\nabla} r|\leq c.}
\end{prop}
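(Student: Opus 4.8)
The plan is to prove a uniform $C^1$ bound for the graph function by a tensor maximum principle argument applied to the support function $u = \la/v$, equivalently to $v$, equivalently to $w := \log v - \vp(r)$ or some comparable combination of $u$, $r$, and possibly $F$. Concretely, I would work with a test function of the form
\eq{\Psi = \log v + \g(r)}
for a suitably chosen monotone function $\g$ (or, following the style of \Cref{F-bound}, a combination like $-\log u + \al r$), and show that at an interior spatial maximum of $\Psi$ the parabolic operator $\cL$ forces $v$ to be bounded. Since $v^2 = 1 + \la^{-2}\|\hat\n r\|^2_\s$, a bound on $v$ is exactly the desired gradient bound $|\hat\n r|\leq c$.

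The key steps, in order, would be: (i) record the evolution equation for $u$ from \Cref{Ev-Eq}, equation \eqref{Ev-u}, and convert it into an evolution equation for $v = \la/u$ or for $\log v$; the crucial structural fact is that the bad zeroth-order term $\tfrac{n}{F^2}(F^{ij}h_{ik}h^k_j - \tfrac1n F^2)u$ has a \emph{good} (nonnegative, by Newton--MacLaurin / the concavity-plus-normalization of \Cref{F}) sign when it appears with the right orientation, so it can be discarded or used as a friend. (ii) Handle the ambient curvature term $\tfrac{n}{F^2}F^{ij}\bar R_{\al\be\g\de}\nu^\al x^\be_{;i}x^\g_{;m}x^\de_{;j}r_{;}^m$ using the explicit warped-product Ricci/curvature structure \eqref{Ricci-WP}: this term is controlled by $\|\n r\|^2$ times curvature quantities of $\la$, which are bounded on the compact $r$-interval $[\inf r_0,\sup r_0]$ guaranteed by \Cref{Barriers}. (iii) At the maximum point $p$ of $\Psi$, use $\hat\n\Psi = 0$ to substitute $\n u$ (or $\n v$) in terms of $\n r$, completing the square in the gradient terms $F^{ij}u_{;i}r_{;j}$ exactly as in the proof of \Cref{F-bound}, so that the first-order terms become a negative definite quadratic form plus lower-order junk. (iv) Choose the weight $\g$ (or the constant $\al$) large depending only on $\sup r_0, \inf r_0, n, \la$ to dominate the remaining terms, arriving at a contradiction unless $v(p)$ is a priori bounded; since $\Psi$ attains its max at $p$, $v$ is bounded everywhere. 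Note that the term involving $\la''$, namely $-\tfrac{\la''\la}{\la'^2}\|\n r\|^2 u$ in \eqref{Ev-u}, has a sign under $\la''\geq 0$ but here no sign assumption on $\la''$ is imposed, so it must be absorbed by the weight rather than exploited — this is precisely why we need $\la$ (all its derivatives up to the relevant order) only on a compact interval.

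The main obstacle I expect is handling the gradient-type cross term $\tfrac{C}{F}F^{ij}|u_{;i}||r_{;i}|$ and the analogous terms that arise after differentiating $\log v$ twice: one has to make sure that after substituting the critical-point relation these can genuinely be absorbed into $-\tfrac{n}{F^2}F^{ij}(\log v)_{;i}(\log v)_{;j}$ (or the corresponding negative quadratic), which requires the ellipticity $F^{ij} > 0$ (strict monotonicity of $F$) but \emph{not} concavity. A secondary subtlety is that, unlike the $F$-bound where one already had $F\geq$ large at the max point to gain smallness of $C/F$, here there is no such luxury — $F$ could be small — so the good negative term must come entirely from the structure of $\cL(\log v)$ itself (the $-\tfrac{n}{F^2}F^{ij}(\log v)_{;i}(\log v)_{;j}$ term and the sign of the Weingarten quadratic), and one must be careful that terms like $\tfrac{n}{F^2}F^{ij}g_{ij}$ (which equals $\tfrac{n}{F}$ by homogeneity only after contraction, but in general is just $F^{ij}g_{ij}$) do not blow up; here one uses that $F^{ij}g_{ij} \geq c(n) > 0$ is bounded below on $\G$ for $1$-homogeneous strictly monotone $F$, giving a usable coercive term. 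Modulo these bookkeeping points, the argument is a standard, if delicate, application of the maximum principle in the spirit of Gerhardt's gradient estimates for graphical flows, adapted to the extra lower-order speed $-u/\la'$.
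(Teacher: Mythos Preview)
Your plan differs from the paper's proof, and as sketched it has a real gap.

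The paper does \emph{not} work with the evolution of $u$ (or $\log v$). Instead it rewrites the flow as a scalar parabolic equation on $\bbS^{n}$ for $\p=\int^{r}\la^{-1}$, namely $\del_{t}\p=G(\p,\hat\n\p,\hat\n^{2}\p)$, and applies the maximum principle to $\Phi=\log\bigl(|\hat\n\p|^{2}/f(\p)\bigr)$ with $f(\p)=e^{-a\p}$. The decisive negative term comes from the \emph{Ricci identity on the round sphere}: commuting derivatives on $|\hat\n\p|^{2}$ produces
\[
-2G^{ij}\s_{ij}|\hat\n\p|^{2}+2G^{ij}\p_{i}\p_{j}
=-\fr{2nv^{2}}{F^{2}}\sum_{k\geq 2}F^{kk}\,|\hat\n\p|^{2},
\]
a term of strength $v^{2}\cdot|\hat\n\p|^{2}\cdot F^{kk}/F^{2}$ in the directions orthogonal to $\hat\n\p$, while the $F^{11}$-direction is handled by the Hessian-squared term and the choice of $f$. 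This spherical curvature contribution is what ultimately dominates all the lower-order junk once $a$ is large.

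Your approach via \eqref{Ev-u} does not produce an analogue of this term. After passing to $\cL(-\log u)$ the ambient-curvature piece becomes $\fr{n\la}{uF^{2}}F^{ij}\bar R_{\al\be\g\de}\nu^{\al}x^{\be}_{;i}x^{\g}_{;m}x^{\de}_{;j}r_{;}^{m}$, which carries a factor $\la/u=v$ and hence is of order $v\cdot\fr{1}{F^{2}}F^{ij}g_{ij}$. The weight $\al r$ in $\cL r$ only contributes $-\al\fr{n\la'}{\la F^{2}}F^{ij}g_{ij}$ with $\al$ \emph{fixed}, so it cannot absorb a term growing like $v$; and the gradient term $-\fr{n}{F^{2}}F^{ij}(\log u)_{;i}(\log u)_{;j}$, once you substitute the critical-point relation $\n\log u=\al\n r$, becomes $-\fr{n\al^{2}}{F^{2}}F^{ij}r_{;i}r_{;j}$, again with no $v$-scaling. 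So step (iv) of your outline does not close.

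A secondary issue: you invoke ``Newton--MacLaurin / the concavity-plus-normalization of \cref{F}'' to get $F^{ij}h_{ik}h^{k}_{j}\geq\tfrac{1}{n}F^{2}$, but the proposition is stated for general $1$-homogeneous strictly monotone $F$ with \emph{no} concavity hypothesis. The paper's scalar-equation argument needs only monotonicity (so that $F^{i}_{j}>0$) and homogeneity; it never uses this inequality.

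In short, the missing idea is to work on the base $\bbS^{n}$ and exploit its positive Ricci curvature through the commutator $\p_{kij}-\p_{ijk}$; this is where the dominant negative term for the gradient estimate actually lives.
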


\begin{proof}
Recall \eq{\label{phi}\varphi=\int_a^r \frac{1}{\la(s)}~ds} To simplify the notation, we just use $\varphi_i=\varphi_{,i}$, etc., i.e., we omit the comma when taking covariant derivative on $\mathbb{S}^n$.

We rewrite the flow equation as a scalar equation on $\varphi$:
\eq{\label{gamma0}
\del_t\varphi&=\frac{1}{\la}\left(\frac{n}{F\left(\frac{\la'}{\la v}\de_i^j-\frac{1}{\la v}\tilde{g}^{jk}\varphi_{ki}\right)}-\frac{u}{\la'}\right)v\\&=\frac{n v^2}{F(\la'\de_i^j- \tilde{g}^{jk}\varphi_{ki})}-\frac{1}{\la'}=:G(\varphi, \hat{\n} \varphi, \hat{\n}^2 \varphi),}
where $\tilde{g}^{ij}=\s^{ij}-\frac{\varphi^i\varphi^j}{v^2}.$ For simplicity, we denote by $F=F(\la'\de_i^j- \tilde{g}^{jk}\varphi_{ki})$ and $F^i_j$ the derivative of $F$ with respect to its argument.

We compute
\eq{\label{G-phi} &G^{ij}:=\frac{\del G}{\del \varphi_{ij}}= \frac{nv^2}{F^2}F_k^{i}\tilde{g}^{kj},\\
&G^{\varphi_p}:=\frac{\del G}{\del \varphi_p}=\frac{2n \varphi^p}{F}+\frac{nv^2}{F^2}F^i_j  \left(-\frac{\s^{jp}\varphi^k+\s^{kp}\varphi^j}{v^2}+\frac{2\varphi^j\varphi^k\varphi^p}{v^4}\right)\varphi_{ki},
\\&G^\varphi:=\frac{\del G}{\del \varphi}=-\frac{nv^2\la''\la}{F^2}F^i_i +\frac{\la''\la}{{\la'}^2}.
}
Using the $1$-homogeneity of $F$, we have 
\eq{\label{C1eq1}
G^{ij}\varphi_{ij}&=\frac{nv^2}{F^2}F_k^{i}\tilde{g}^{kj}\varphi_{ij}\\&=-\frac{nv^2}{F^2}F_k^{i}(\la'\de_i^k-\tilde{g}^{kj}\varphi_{ij})+\frac{nv^2\la'}{F^2}F_i^{i}=-\frac{nv^2}{F}+\frac{nv^2\la'}{F^2}F_i^{i}}
and
\eq{\label{C1eq2} G^{ij}\varphi_i\varphi_j &=\frac{nv^2}{F^2}F_k^{i}\tilde{g}^{kj}\varphi_i\varphi_j=\frac{n}{F^2}F_k^{i}\varphi^k\varphi_i.}

Let $\mathcal{L}=\p_t-G^{ij}\n^2_{ij}$ be the parabolic operator. Using the Ricci identities on $\bbS^n$ we get

\eq{\label{C1eq3}\cL|\hat\n\p|^2&=-2G^{ij}\varphi_{ik}\varphi_{j}^{\ k}-2G^{ij}\s_{ij}|\hat{\n}\varphi|^2+2G^{ij}\varphi_i\varphi_j\\
					&\hp{=}+G^{\varphi_p}(|\hat{\n}\varphi|^2)_{p}+2G^\varphi|\hat{\n}\varphi|^2.}

Let $f: [0, \infty)\to (0, \infty)$ be an auxiliary function to be determined. 
Consider a test function $$\Phi=\log \frac{|\hat{\n}\varphi|^2}{f(\varphi)}.$$ In the following we compute at a maximal point of $\Phi$. Due to the maximal property, \eq{\hat{\n}|\hat{\n}\varphi|^2=\frac{f'}{f}|\hat{\n}\varphi|^2\hat{\n} \varphi.} 
By a suitable choice of the coordinates, we may assume $\s_{ij}=\de_{ij}$ and $|\hat{\n}\varphi|=\varphi_1$. Then \eq{\varphi_{11}=\frac12\frac{f'}{f}|\hat{\n}\varphi|^2,\q \varphi_{1j}=0~\mbox{for}~j=2,\cdots, n.}
Then $\tilde{g}^{ij}$ is diagonal with 
\eq{\tilde{g}^{11}=\frac{1}{v^2},\q \tilde{g}^{ii}=1~\mbox{for} ~i\neq 1.} We may further assume $\varphi_{ij}$ is diagonal and in turn $F_{i}^k$ is diagonal. Thus
we have
\eq{\label{C1eq4} -2G^{ij}\varphi_{ik}\varphi_{j}^{\ k}&=-\frac{2nv^2}{F^2}F_l^{i}\tilde{g}^{lj}\varphi_{ik}\varphi_{j}^{\ k}
\\&= -\frac{2n}{F^2}F^{11}\frac14\left(\frac{f'}{f}\right)^2|\hat{\n}\varphi|^4-\frac{2nv^2}{F^2}\sum_{k\ge 2}F^{kk}\varphi_{kk}^2,}

\eq{\label{C1eq5} -2G^{ij}\s_{ij}|\n\varphi|^2+2G^{ij}\varphi_i\varphi_j&=- \frac{2nv^2}{F^2}F_k^{i}\ti{g}^{kj}(\s_{ij}|\hat{\n}\varphi|^2-\varphi_i\varphi_j)\\&= - \frac{2nv^2}{F^2}\sum_{k\ge 2}F^{kk}|\hat{\n}\varphi|^2,}

\eq{\label{C1eq6}
G^{\varphi_p}(|\hat{\n}\varphi|^2)_{p}
&=\left(\frac{2n\varphi^p}{F}+\frac{nv^2}{F^2}F^i_j \br{-\frac{\s^{jp}\varphi^k+\s^{kp}\varphi^j}{v^2}+\frac{2\varphi^j\varphi^k\varphi^p}{v^4}}\varphi_{ki}\right)\frac{f'}{f}|\hat{\n}\varphi|^2\varphi_p
\\&=\frac{f'}{f}\frac{2n}{F}|\hat{\n}\varphi|^4-\left(\frac{f'}{f}\right)^2\frac{n}{v^2F^2}F^{11}|\hat{\n}\varphi|^6}

and

\eq{\label{C1eq7} 2G^\varphi|\hat{\n}\varphi|^2&=\left(-\frac{2nv^2\la''\la}{F^2}F^{i}_i +\frac{2\la''\la}{{\la'}^2}\right)|\hat{\n}\varphi|^2.}

On the other hand, using \eqref{gamma0}, \eqref{C1eq1} and \eqref{C1eq2} we get
\eq{\label{C1eq8}
\mathcal{L} (f(\varphi))&=f'\left(\frac{n v^2}{F}-\frac{1}{\la'}\right)-f' G^{ij}\varphi_{ij}-f''G^{ij}\varphi_i\varphi_j
\\&=f'\left(\frac{2n v^2}{F} -\frac{1}{\la'}\right)-f' \frac{nv^2\la'}{F^2}F^{i}_i-f''\frac{n}{F^2}F^{11}|\hat{\n}\varphi|^2.}

Using \eqref{C1eq3}--\eqref{C1eq8} and the maximal property of $\Phi$ at $p$,  we have
\eq{\label{C1eq9}
0&\le \frac{\mathcal{L}(|\hat{\n}\varphi|^2)}{|\n\varphi|^2}-\frac{\mathcal{L}f}{f}
\\&= -\frac{2n}{F^2}F^{11}\frac14\left(\frac{f'}{f}\right)^2|\hat{\n}\varphi|^2-\frac{2nv^2}{F^2|\hat{\n}\varphi|^2}\sum_{k\ge 2}F^{kk}\varphi_{kk}^2- \frac{2nv^2}{F^2}\sum_{k\ge 2}F^{kk}
\\&\quad +\frac{f'}{f}\frac{2n}{F}|\hat{\n}\varphi|^2-\left(\frac{f'}{f}\right)^2\frac{n}{v^2F^2}F^{11}|\hat{\n}\varphi|^4-\frac{2nv^2\la''\la}{F^2}F^{i}_i+2\frac{\la''\la}{{\la'}^2}
\\&\quad -\frac{f'}{f}\left(\frac{2n v^2}{F} -\frac{1}{\la'}\right)+\frac{f'}{f} \frac{nv^2\la'}{F^2}F^{i}_i +\frac{f''}{f}\frac{n}{F^2}F^{11}|\hat{\n}\varphi|^2
\\&= -\frac{2n}{F^2}F^{11}\frac14\left(\frac{f'}{f}\right)^2|\hat{\n}\varphi|^2-\frac{2nv^2}{F^2|\hat{\n}\varphi|^2}\sum_{k\ge 2}F^{kk}\varphi_{kk}^2- \frac{2nv^2}{F^2}\sum_{k\ge 2}F^{kk}
\\&\quad -\frac{f'}{f}\frac{2n}{F}-\left[\left(\frac{f'}{f}\right)^2\frac{|\hat{\n}\varphi|^2}{v^2}-\frac{f''}{f}\right]\frac{n}{F^2}F^{11}|\hat{\n}\varphi|^2
\\&\quad -\frac{nv^2}{F^2} \left(2\la''\la -\la'\frac{f'}{f}\right)F^{i}_i  +\frac{f'}{f}\frac{1}{\la'}+2\frac{\la''\la}{{\la'}^2}.
}
Note that
\eq{\label{C1eq10}
-\frac{f'}{f}\frac{2n}{F}&=-\frac{f'}{f}\frac{2n}{F^2}F_k^{i}(\la'\de_i^k-\tilde{g}^{kj}\varphi_{ij})
\\&=-\frac{f'}{f}\frac{2n\la'}{F^2}F^{i}_i+\left(\frac{f'}{f}\right)^2\frac{n}{F^2}\frac{1}{v^2}F^{11}|\hat{\n}\varphi|^2+ \frac{f'}{f}\frac{2n}{F^2}\sum_{k\ge 2}F^{kk}\varphi_{kk}.
}
Therefore
\eq{\label{C1eq11}
0&\le -\frac{2n}{F^2}F^{11}\frac14\left(\frac{f'}{f}\right)^2|\hat{\n}\varphi|^2-\frac{2nv^2}{F^2|\hat{\n}\varphi|^2}\sum_{k\ge 2}F^{kk}\varphi_{kk}^2- \frac{2nv^2}{F^2}\sum_{k\ge 2}F^{kk}
\\&\quad -\frac{f'}{f}\frac{2n\la'}{F^2}F^{i}_i+\left(\frac{f'}{f}\right)^2\frac{n}{F^2}\frac{1}{v^2}F^{11}|\hat{\n}\varphi|^2+ \frac{f'}{f}\frac{2n}{F^2}\sum_{k\ge 2}F^{kk}\varphi_{kk}
\\&\quad  -\left[\left(\frac{f'}{f}\right)^2\frac{|\hat{\n}\varphi|^2}{v^2}-\frac{f''}{f}\right]\frac{n}{F^2}F^{11}|\hat{\n}\varphi|^2
\\&\quad -\frac{nv^2}{F^2} \left(2\la''\la-\la'\frac{f'}{f}\right)F^{i}_i  +\frac{f'}{f}\frac{1}{\la'}+2\frac{\la''\la}{{\la'}^2}
}
and, completing the square,
\eq{0&\leq  -\frac{n}{F^2}F^{11}\left[\frac12\left(\frac{f'}{f}\right)^2+\left(\frac{f'}{f}\right)^2\frac{|\hat{\n}\varphi|^2}{v^2}-\frac{f''}{f} +2\la''\la { -\la'\frac{f'}{f}} \right]|\hat{\n}\varphi|^2
\\&\quad +\frac{n}{F^2}F^{11}\left[ -2\la'\frac{f'}{f}+\left(\frac{f'}{f}\right)^2\frac{|\hat{\n}\varphi|^2}{v^2}- 2\la''\la+\la'\frac{f'}{f} \right]
\\&\quad -\frac{2n}{F^2}\sum_{k\ge 2}F^{kk}\left(\varphi_{kk}-\frac12\frac{f'}{f}\right)^2-\frac{2n}{F^2|\hat{\n} \varphi|^2}\sum_{k\ge 2}F^{kk}\varphi_{kk}^2
\\&\quad + \frac{2n}{F^2}\left(\frac14\left(\frac{f'}{f}\right)^2-\la' \frac{f'}{f}-v^2\left(1+ \la''\la-\frac12\la'\frac{f'}{f}\right)\right)\sum_{k\ge 2}F^{kk}
\\&\quad +\frac{f'}{f}\frac{1}{\la'}+2\frac{\la''\la}{{\la'}^2}.}

Choose $f(\varphi)=e^{-a \varphi}$ with $a>0$ large enough so that the first term on RHS of \eqref{C1eq11} have negative sign and when $|\hat{\n}\varphi|^2$ is large enough, this term dominates the second term. Also, by choosing $a>0$ large enough and then $|\hat{\n}\varphi|^2$ is large enough, the fourth line and the fifth line are both negative. We get a contradiction. Thus $|\hat{\n}\varphi|^2\le C$.
\end{proof}

\medskip

\section{Preserved convexity in the sphere}\label{sec:Pres-Conv}

In ambient spaces where $\la''$ can be negative it is very difficult to control $F$ from below, if the flow hypersurfaces are not convex. Hence we assume strict convexity in these cases and we have to restrict to space forms to show that this property is preserved.

\begin{prop}\label{pres-conv-sphere}
Let $x_0(M)$ be the embedding of a closed $n$-dimensional manifold $M$ into $\bbS^{n+1}$, such that $x_0(M)$ is strictly convex. 
Let $F$ be a $1$-homogeneous, monotone and inverse concave\footnote{$F(\kappa_1, \cdots, \kappa_n)$ is called inverse concave if $\tilde{F}(\kappa_1, \cdots, \kappa_n)=F^{-1}(\kappa_1^{-1},\cdots, \kappa_n^{-1})$ is concave.} curvature function.
Then along any solution $x$ of \eqref{Flow} with initial embedding $x_0$ all flow hypersurfaces are strictly convex.
\end{prop}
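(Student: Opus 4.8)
The natural strategy is Hamilton's tensor maximum principle applied to the Weingarten operator, via its evolution equation, which in the space form $\bbS^{n+1}$ (where $K_N=1$, $\la=\sin$) is \eqref{Ev-h-spaceform}. Since $x_0(M)$ is strictly convex it suffices to rule out that the smallest principal curvature $\ka_{\min}$ ever reaches $0$. So suppose $t_0$ is the first time strict convexity fails and $\ka_{\min}(p_0,t_0)=0$ for some $p_0\in M$. At $p_0$ choose an orthonormal frame in which $A=(h^i_j)$ is diagonal, with $e_1$ spanning the null eigenspace, and extend $\xi=e_1$ with $\n\xi(p_0)=0$. Criticality of $\ka_{\min}$ at $(p_0,t_0)$ gives $h_{11;k}=0$ for all $k$, and since the Codazzi equation \eqref{Codazzi} makes $\n h$ totally symmetric in a space form, also $h_{1k;1}=0$; moreover $F^{kl}\n^2_{kl}\ka_{\min}\ge 0$ there.

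Contracting the right hand side of \eqref{Ev-h-spaceform} with $\xi\otimes\xi$, the zeroth order terms $-\fr{2n}{F}h^j_kh^k_i$ and $\br{1-\fr{u^2}{\la'^2}+\dots}h^j_i$ vanish because $A\xi=0$, while $\fr{2u}{\la'}\de^j_i$ produces $\fr{2u}{\la'}>0$ (recall $u=\la/v>0$ along the flow, cf. \cref{Barriers} and \cref{grad-bound}). Of the first order terms, $\fr{\la}{\la'^2}\br{u_{;i}r_{;}^j+r_{;i}u_{;}^j}$ vanishes on $\xi\otimes\xi$ since $u_{;i}\xi^i=\bar g(\la\del_r,A\xi)=0$, and $\fr{2u\la^2}{\la'^3}r_{;i}r_{;}^j$ is nonnegative there. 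What remains is $\fr{n}{F^2}\bigl(F^{kl,rs}h_{kl;1}h_{rs;1}-\fr{2}{F}(F^{kl}h_{kl;1})^2\bigr)$, which may well be negative, and this is the only place where inverse concavity is needed. By Andrews' characterization of inverse concave curvature functions \cite{Andrews:/2007}, at a diagonal point $F^{kl,rs}\eta_{kl}\eta_{rs}+2\sum_{k,l}\fr{F^k}{\ka_l}\eta_{kl}^2\ge \fr{2}{F}(F^{kl}\eta_{kl})^2$ for every symmetric $\eta$ (where $F^k=\del F/\del\ka_k$, so $F^{ij}=\mathrm{diag}(F^1,\dots,F^n)$ at $p_0$); applied to $\eta=\n_\xi h$ and using $\eta_{1l}=h_{1l;1}=0$, the possibly singular terms drop and
\[
F^{kl,rs}h_{kl;1}h_{rs;1}-\fr{2}{F}(F^{kl}h_{kl;1})^2\ \ge\ -2\sum_{k,l\ge 2}\fr{F^k}{\ka_l}h_{kl;1}^2.
\]

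The key point is that this negative lower bound is exactly absorbed by the extra gradient term supplied by the refined tensor maximum principle (Hamilton, Andrews): when $\ka_{\min}=\ka_1$ is simple, computing $\del_t\ka_{\min}$ from $\del_t h^1_1$ produces the nonnegative gain $\fr{2n}{F^2}\sum_{\mu\ge 2}\fr{F^{kl}h^1_{\mu;k}h^1_{\mu;l}}{\ka_\mu-\ka_1}$, which by full symmetry of $\n h$ and $\ka_1=0$ equals $\fr{2n}{F^2}\sum_{k,l\ge 2}\fr{F^k}{\ka_l}h_{kl;1}^2$ and cancels the bound above. Feeding everything into the evolution inequality at $(p_0,t_0)$ then yields $0\ge\del_t\ka_{\min}\ge \fr{2u}{\la'}>0$, a contradiction. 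Hence strict convexity is preserved, and in fact $\ka_{\min}$ stays bounded away from $0$ on compact time intervals.

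The main obstacle is this third step: verifying that the lower bound coming from inverse concavity and the gain term from the refined maximum principle match up precisely, and handling the case in which $\ka_{\min}$ has higher multiplicity — where the gain term is restricted to directions $\mu$ with $\ka_\mu>\ka_1$ and one additionally uses the vanishing of $h_{\mu\mu;k}$ in the degenerate directions. The remaining parabolic bookkeeping (Lipschitz dependence of $\ka_{\min}(t)$, evaluation on the boundary of the cone $\G$) is routine.
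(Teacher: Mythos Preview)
Your argument is correct and is a standard route to convexity preservation, but it differs from the paper's proof. The paper does not work with $\ka_{\min}$ directly; instead it passes to the inverse Weingarten map $b=A^{-1}$, derives the parabolic equation \eqref{pres-conv-1} for $b^k_m$, and applies the usual scalar maximum principle to the largest eigenvalue of $b$ via Gerhardt's $\tilde\phi$-trick. In that setting the gradient terms are packaged as $\fr{n}{F^2}\bigl(\fr{2}{F}F^{rs}F^{pq}-2F^{qs}b^{pr}-F^{pq,rs}\bigr)b^k_jb^i_m h_{rs;i}h_{pq;}^{\ j}$, and inverse concavity is invoked (after Urbas) precisely to make this nonpositive; the remaining zero-order terms are then dominated by $-\fr{2u}{\la'}b^k_lb^l_m$. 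Your approach instead keeps $h^i_j$, contracts with the null eigenvector, and uses Andrews' inequality for inverse concave $F$ together with the extra ``gain'' term in the refined tensor maximum principle to absorb the potentially bad second-derivative contribution. The trade-off is that the paper's route sidesteps the delicate matching of the gain term with the inverse-concavity lower bound and the multiplicity discussion you flag as the main obstacle, at the price of computing the evolution of $b$; your route is more direct on $h^i_j$ but leans on the heavier Hamilton--Andrews machinery. Both uses of inverse concavity are equivalent reformulations of the same second-order inequality.
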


\pf{
Let $b$ be the inverse of the Weingarten map, which exists at least for a short time. We show that for a smooth solution 
\eq{x\cn [0,T^*)\x M\ra \bbS^{n+1}}
all $M_t$, $t<T^*$, are strictly convex. 
From 
\eq{\dot{b}^k_m=-b^k_j\dot{h}^j_ib^i_m,\q F^{qs}b^k_{m;qs}=2F^{qs}b^k_jh^j_{p;q}b^p_rh^r_{i;s}b^i_m-F^{qs}b^k_ph^p_{l;qs}b^l_m,}
\eq{u_{;i}=\la h_{i}^k r_{;k}}
and \eqref{Ev-h-spaceform}
we deduce
\eq{\label{pres-conv-1}\cL b^k_m&=\fr{n}{F^2}\br{\fr{2}{F}F^{rs}F^{pq}-2F^{qs}b^{pr}-F^{pq,rs}}b^k_jb^i_mh_{rs;i}{h_{pq;}}^j\\
			&\hp{=}-\fr{\la^2}{\la'^2}\br{b^k_l{r_{;}}^{l}r_{;m}+{r_{;}}^{k} b_m^lr_{;l}}-\fr{2u\la^2}{\la'^3}b^{k}_{j}{r_{;}}^{j}b^{i}_{m}r_{;i}\\
			&\hp{=}+\psi_1 b^k_m-\fr{2u}{\la'}b^k_lb^l_m+\psi_2\delta_{m}^k,}
where $\psi_i,$ $i=1,2$ are some functions, which are bounded on every compact interval $[0,T_0]\sub [0,T^*)$. If the convexity is lost at some time $T_0<T^*$, then the largest eigenvalue of $b$ blows up at $T_0$. Although the largest eigenvalue is not a smooth function, we can still apply \eqref{pres-conv-1} to estimate it by using the following well known trick, compare e.g. the proof of \cite[Lemma~6.1]{Gerhardt:01/1996}:

Define
\eq{\phi=\sup\{b_{ij}\eta^i\eta^j\cn g_{ij}\eta^i\eta^j=1\}}
and suppose this function attains a maximum at $(t_0,\xi_0)$, $t_0<T_0.$ Using normal coordinates around $(t_0,\xi_0)$ with
\eq{g_{ij}=\de_{ij},\q b_{ij}=\ka_i^{-1}\de_{ij},\q \ka_1^{-1}\leq \dots\leq \ka_n^{-1}.}
Around $(t_0,\xi_0)$ let $\eta$ be the vector field
\eq{\eta=(0,\dots,0,1)} and define
\eq{\ti\phi=\fr{b_{ij}\eta^i\eta^j}{g_{ij}\eta^i\eta^j},}
then locally around $(t_0,\xi_0)$ we have $\ti\phi\leq \phi$ and at this point there holds
\eq{\dot{\ti{\phi}}=\dot{b}_{nn}+2\cF=\dot{b}^n_n}
and the spatial derivatives also coincide. Thus at $(t_0,\xi_0)$ the function $\ti\phi$ and $b^n_n$ satisfy the same evolution equation, whence it suffices to show that the right hand side of \eqref{pres-conv-1} is negative at the point $(t_0,\xi_0)$.

The first line is negative due to the inverse concavity of $F$, compare the proof in \cite[p.~112]{Urbas:/1991}, while for the rest the good terms involving $b^k_lb^l_m$ are surely dominating. This completes the proof.
}

\medskip

\section{Bounds on the speed and the curvature}\label{sec:Speed-Bound}
In this section we deduce the remaining ingredients which are necessary to obtain longtime existence, namely we need a full bound on the second fundamental form and in turn, to apply the Krylov-Safonov theory, we need a lower bound on the curvature function to show that the operator $\cL$ is uniformly parabolic along the flow. We start with the spherical case.

\subsection{The spherical case}

\begin{lemma}\label{H-bound-sphere}
Let $x_0(M)$ be the embedding of a closed $n$-dimensional manifold $M$ into $\bbS^{n+1}$, such that $x_0(M)$ is strictly convex. 
Let 
\eq{F=n\fr{H_k}{H_{k-1}}.}
Then along any solution $x$ of \eqref{Flow} with initial embedding $x_0$ there exists a constant $c=c(n,k,\sup r_0,\inf r_0,\la)$, such that
\eq{\|A\|^2\leq c.}
\end{lemma}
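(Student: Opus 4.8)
The strategy is the standard one for obtaining a two-sided pinching of the second fundamental form along a curvature flow: bound the largest principal curvature from above by a maximum principle argument on a cleverly chosen test function, using the already-established convexity (\cref{pres-conv-sphere}), the $C^0$-barriers (\cref{Barriers}), the gradient estimate (\cref{grad-bound}) and the upper bound on $F$ (\cref{F-bound}). Since we are in $\bbS^{n+1}$, a space form with $K_N=1$, we have the simplified evolution equation \eqref{Ev-h-spaceform} at our disposal, and convexity guarantees $0<\ka_i$, so $F>0$ and $\|A\|^2$ controls $\tr A$ and vice versa. The natural test function is of the form
\eq{w=\log\ka_{\max}+\mu\frac{u}{\la}+\beta r\quad\text{or}\quad w=\log\ka_{\max}-\log u+\beta r,}
for constants $\mu,\beta$ to be chosen. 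Here $\ka_{\max}$ is the largest principal curvature; as usual we replace it by the smooth quantity $h_{ij}\eta^i\eta^j/(g_{ij}\eta^i\eta^j)$ at a spatial maximum with $\eta$ a fixed coordinate vector, exactly as in the proof of \cref{pres-conv-sphere}, so that at the maximizing point $(t_0,\xi_0)$ the evolution equation \eqref{Ev-h-spaceform} for $h^n_n$ applies verbatim.

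The key steps, in order: (i) compute $\cL(\log\ka_{\max})$ at the maximum point from \eqref{Ev-h-spaceform}, keeping careful track of the sign of the gradient term $\frac{n}{F^2}F^{kl,rs}h_{kl;i}h_{rs;j}$ — this is where inverse concavity of $F=nH_k/H_{k-1}$ (which holds for this ratio, cf. the references in \cref{pres-conv-sphere}) produces a good negative term, as in the Urbas-type argument already invoked for \eqref{pres-conv-1}; (ii) compute $\cL(u/\la)$ and $\cL r$ using \cref{Ev-Eq}, discarding lower-order terms that are already controlled by $\|\hat\n r\|\le c$, $r\le\sup r_0$, $r\ge \inf r_0$ and $F\le c$; (iii) at the maximum of $w$ use the first-order condition $\hat\n\log\ka_{\max}=-\mu\hat\n(u/\la)-\beta\hat\n r$ to absorb the troublesome term $\frac{n}{F^2}F^{ii}(\log F)_{;i}$-type expressions into a negative square, just as in the proof of \cref{F-bound}; (iv) choose $\mu$ to kill the leading positive contribution coming from the zeroth-order curvature terms $\frac{n}{F^2}F^{kl}h_{rk}h^r_l\,h^n_n$ versus $\frac{2n}{F}(h^n_n)^2$, using convexity and the Newton–MacLaurin inequalities $F^{kl}h_{rk}h^r_l\le F\cdot\ka_{\max}$ and $\frac1n F^2\le F^{kl}h_{rk}h^r_l$; then choose $\beta$ large to dominate the remaining bounded terms. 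Concluding, at the maximum either $\ka_{\max}$ is a priori bounded or the right-hand side of $\cL w$ is strictly negative, contradicting the maximum principle, so $\ka_{\max}\le c$, and together with convexity this gives $\|A\|^2\le c$.

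**Main obstacle.** The delicate point is the interplay of the three "bad" quadratic-in-curvature contributions in \eqref{Ev-h-spaceform}: the term $+\frac{n}{F^2}F^{kl}h_{rk}h^r_l\,h^n_n$ (of size $\sim F\ka_{\max}^2$ when $\ka_{\max}$ dominates, since $F^{kl}h_{rk}h^r_l$ could be as large as $F\ka_{\max}$), the definitely-good term $-\frac{2n}{F}(h^n_n)^2$, and the contribution of $\cL(u/\la)$ which, via \eqref{Ev-u}, contains $+\frac{n}{F^2}(F^{ij}h_{ik}h^k_j-\frac1n F^2)\frac{u}{\la}$ — positive and of the same order. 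One must verify that for $F=nH_k/H_{k-1}$ the good term $-\frac{2n}{F}\ka_{\max}^2$ genuinely beats these, which is where the precise structure of the ratio, its inverse concavity, and the Newton–MacLaurin inequalities (as exploited in \cref{F-bound} via \cite[Prop.~2.2]{HuiskenSinestrari:09/1999}) are essential; a naïve bound $F^{kl}h_{rk}h^r_l\le F\ka_{\max}$ is exactly borderline, so one needs to also use the gradient-term gain from inverse concavity, or instead use the test function $\log\ka_{\max}-\log u$ so that the $u$-terms are arranged to cancel the worst pieces. Handling the non-smoothness of $\ka_{\max}$ is routine given the trick already spelled out in \cref{pres-conv-sphere}.
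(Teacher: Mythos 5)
Your plan is essentially the paper's proof: the paper reduces to bounding $H$ via the preserved convexity and applies the maximum principle to $w=\log H-\log u$, where the $-\log u$ term produces exactly the cancellation of $\frac{n}{F^{2}}F^{kl}h_{rk}h^{r}_{l}$ against the corresponding term in \eqref{Ev-u} that you correctly identify as the way out of your ``main obstacle''. The only simplifications in the paper's version are that ordinary concavity of $F$ (not inverse concavity) suffices for the second-derivative term, no extra $\beta r$ summand is needed, and the good term $-\frac{2n}{FH}\|A\|^{2}\leq-\frac{2}{F}H$ closes the argument since $F\leq c$ by \cref{F-bound}.
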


\pf{
Due to the convexity preservation, \cref{pres-conv-sphere}, it suffices to bound the mean curvature $H$ from above. Note $u\ge c_0>0$ by \cref{grad-bound} (we may also use the convexity to get this, cf. \cite[Lemma 2.7.10]{Gerhardt:/2006}). We use the auxiliary function
\eq{w=\log H-\log u} and deduce from \eqref{Ev-u}, \eqref{Ev-h-spaceform}, the concavity of $F$ and
\eq{u_{;i}=\la h^k_i r_{;k},}
that at a maximal point of $w$:
\eq{0\le\cL w&=\fr{1}{H}\cL H-\fr{1}{u}\cL u\\
		&\leq c+\fr{c}{H}-\fr{2n}{FH}\|A\|^2\\
        &\leq c+\fr{c}{H}-\fr{2}{F}H.}
Since $F$ is bounded from above by \cref{F-bound}, we get a upper bound of $H$ from above.
}

We use the previous result to get bounds from below on $F$.

\begin{lemma}\label{1/H-bound-sphere}
Under the assumptions of \cref{H-bound-sphere} there exists a constant $0<c=c(n,\sup r_0,\inf r_0,\la)$ such that
\eq{F\geq c.}
\end{lemma}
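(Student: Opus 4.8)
The plan is to run a maximum principle argument on a test function of the form $w=\log F-\p(r)$ (possibly augmented by a lower-order term in $u/\la$, as in the proof of \cref{F-bound}), designed so that at a spatial minimum of $w$ the value of $F$ cannot be small; here $\p$ is an auxiliary function of the radial coordinate to be chosen. By \cref{Barriers} the flow stays in $\{\inf r_0\le r\le\sup r_0\}\subset[0,\fr\pi2)$, so $\la=\sin r$, $\la'=\cos r$, $\la''=-\la$ and $v$ are two-sidedly bounded and $u\ge c_0>0$ by \cref{grad-bound}; moreover $\|A\|^2\le c$ by \cref{H-bound-sphere}, so $\ka_i\le K$, and the $M_t$ are strictly convex by \cref{pres-conv-sphere}. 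All of these are the inputs that will be fed into the computation.

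First I would specialise the evolution equations of \cref{Ev-Eq} to $N=\bbS^{n+1}_+$: the ambient curvature terms collapse, $\bar R_{\al\be\g\de}\nu^\al x^\be_{;i}x^\g_{;m}x^\de_{;j}{r_{;}}^m=0$ and $\bar R_{\al\be\g\de}x^\al_{;i}\nu^\be\nu^\g x^\de_{;j}=g_{ij}$, and $\la''=-\la$, $\la'''=-\la'$, $\la^2+\la'^2=1$. Then $\cL w=\cL\log F-\p'\cL r+\fr{n\p''}{F^2}F^{ij}r_{;i}r_{;j}$, and at an interior minimum $p$ of $w$ on $M_t$ I would use $\n w=0$, i.e. $F_{;i}=F\p' r_{;i}$, to rewrite the gradient term $-\fr n{F^4}F^{ij}F_{;i}F_{;j}$ as $-\fr{n\p'^2}{F^2}F^{ij}r_{;i}r_{;j}$, together with $-\fr n{F^2}F^{ij}\n^2_{ij}w\le0$. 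The computation would then rest on three algebraic facts: (i) $u_{;i}=\la h^k_i r_{;k}$, so that $F^{ij}u_{;i}r_{;j}=\la\sum_i F^{ii}\ka_i(r_{;i})^2\ge0$ by convexity; (ii) $u=\la/v$, which turns the sole negative $O(1/F)$ contribution $-\fr{2n\la}{\la'vF}$ into $-\fr{2nu}{\la'F}$ and lets it pair with $\fr{2u}{\la'F}F^{ij}g_{ij}$ into $\fr{2u}{\la'F}\br{\sum_i F^{ii}-n}\ge0$, using $\sum_i F^i\ge F(1,\dots,1)=n$ (a consequence of concavity and $1$-homogeneity); and (iii) the Newton–MacLaurin inequalities for $F=nH_k/H_{k-1}$ (as used in \cref{F-bound}), which give $F^{ij}h_{ik}h^k_j\ge\fr1n F^2$ and, via $\ka_i\le K$, also $F^{ij}h_{ik}h^k_j=\sum_i F^{ii}\ka_i^2\le KF$. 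The point of the auxiliary function $\p$ is to kill, or give the favourable sign to, the $O(1/F^2)$ terms carrying $F^{ij}g_{ij}$ and $F^{ij}r_{;i}r_{;j}$; for instance $\p=-\log\la'$ makes these cancel identically at the critical point, leaving only $O(1/F)$ and bounded terms.

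The main obstacle I expect is exactly this sign bookkeeping in the reaction terms: the inverse-curvature-type speed $n/F$ drives $F$ \emph{down}, and after the $O(1/F^2)$ cancellations one is left at the minimum with $\cL w\ge -\fr nF F^{ij}h_{ik}h^k_j+F+\fr{2u}{\la'F}\br{\sum_i F^{ii}-n}$ plus manifestly nonnegative (convex) $r$-terms, where the Newton–MacLaurin bound only yields $-\fr nF F^{ij}h_{ik}h^k_j+F\le0$. One therefore needs the positive $O(1/F)$ term $\fr{2u}{\la'F}\br{\sum_iF^{ii}-n}$ (and the convex $r$-terms) to absorb this bounded negative deficit once $F$ falls below a threshold $\de=\de(n,\sup r_0,\inf r_0,\la)$, and making this work uniformly — in particular near umbilic configurations, where $\sum_i F^{ii}-n$ degenerates — is precisely where the strict convexity of $M_t$ and the barrier $r<\fr\pi2$ are genuinely used (this is why in ambient spaces with $\la''\ge0$ convexity is not needed; cf. the remark following \cref{General-flow-main}), and it may require a finer Newton–MacLaurin estimate or an extra correction ($\propto u/\la$) in $w$. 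Once $\cL w\ge0$ is secured at minima with $F\le\de$, Hamilton's trick for $\min_{M_t}w$ gives $\min_{M_t}w\ge\min\{\min_{M_0}w,\log(\de\inf\la')\}$, hence $F\ge c>0$ with $c$ of the asserted form.
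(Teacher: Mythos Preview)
Your overall framework is correct and essentially the paper's: a maximum principle applied to $\log F$ corrected by a function of $r$, using the barriers, the gradient estimate, the preserved convexity and the bound $\|A\|^2\le c$ as inputs. The gap is in the choice of the auxiliary function. By taking $\p=-\log\la'$ you sit exactly on the borderline case where the coefficient of the dominant term $\tfrac{n}{F^2}F^{ij}g_{ij}$ in $\cL w$ vanishes (your own computation gives this coefficient as $-1+\p'\tfrac{\la'}{\la}$, and with $\la''=-\la$ and $\p=-\log\la'$ one has $\p'=\la/\la'$, hence coefficient $0$). You then try to close the argument with the $O(1/F)$ term $\tfrac{2u}{\la'F}\bigl(\sum_i F^{ii}-n\bigr)$, but as you already note this degenerates at umbilic configurations, and no extra Newton--MacLaurin identity will rescue it uniformly: both $\sum_i F^{ii}-n$ and $-\tfrac{n}{F}F^{ij}h_{ik}h^k_j+F$ vanish there, so you are comparing two quantities that degenerate simultaneously without any uniform ratio control.

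The paper's remedy is to \emph{perturb} rather than to cancel: take $\p(r)=-\log\bigl(\la'-\al\bigr)$ with a fixed $0<\al<\tfrac12\la'(\sup r_0)$. A direct check using $\la''=-\la$ gives the two identities
\[
-1+\p'\tfrac{\la'}{\la}=\tfrac{\al}{\la'-\al}>0,\qquad \p'^2+\p'\tfrac{\la'}{\la}-\p''=0,
\]
so at a spatial minimum of $w=\log F-\p(r)$ (where $(\log F)_{;i}=\p' r_{;i}$) the $\tfrac{n}{F^2}F^{ij}r_{;i}r_{;j}$ contributions cancel identically, while the $\tfrac{n}{F^2}F^{ij}g_{ij}$ term survives with a strictly positive coefficient. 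Using only $F^{ij}g_{ij}\ge n$ (concavity and $1$-homogeneity) and the crude bound $\tfrac{n}{F^2}F^{ij}h_{ik}h^k_j\le\tfrac{nH}{F}\le\tfrac{c}{F}$ (convexity plus $\|A\|\le c$), everything else is absorbed into $\tfrac{c}{F}+c$, and one obtains at the minimum
\[
\cL w\ \ge\ \tfrac{\al}{\la'-\al}\,\tfrac{n}{F^2}F^{ij}g_{ij}-\tfrac{c}{F}-c\ \ge\ \tfrac{n^2\al}{(\la'-\al)F^2}-\tfrac{c}{F}-c,
\]
which is strictly positive once $F$ is below a threshold depending only on $n,\sup r_0,\inf r_0,\la$. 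This closes the argument without any refined sign analysis of the lower-order terms. In short: your $\p=-\log\la'$ is the $\al=0$ endpoint of the paper's one-parameter family, and it is precisely the strict positivity $\al>0$ that supplies the missing $O(1/F^2)$ good term.
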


\pf{
We use the same method as in \cite[Prop.~5.3]{MakowskiScheuer:11/2016} and bound the auxiliary function
\eq{z=-\log F+f(r),}
where 
\eq{f(r)=-\log\br{\la'-\al},\q 0<\al<\fr{1}{2}\la'(\sup r_0).}
Since $\la''=-\la$, it is direct to check that
\eq{\label{xfff}1-f'\fr{\la'}{\la}=-\fr{\al}{\la'-\al},\q f'^2+f'\fr{\la'}{\la}-f''=0.}
From the convexity, the $1$-homogeneity of $F$ and \cref{H-bound-sphere}, we see
\eq{\label{xff}\fr{n}{F^{2}}F^{ij}h_{ik}h^{k}_{j}\leq \fr{nH}{F}\leq \fr{c}F.}
Using \eqref{Ev-r}, \eqref{EV-F} and \eqref{xff},
\eq{\cL z&=-\fr{1}{F}\cL F-\fr{n}{F^4}F^{ij}F_{;i}F_{;j}+f'\cL r-f''\fr{n}{F^2}F^{ij}r_{;i}r_{;j}\\
		&\leq \fr{n}{F^2}F^{ij}(\log F)_{;i}(\log F)_{;j}+\fr{c}{F}+c+\fr{n}{F^2}F^{ij}g_{ij}\\
        &\hp{=}+f'\fr{c}{F}-f'\fr{n\la'}{\la F^2}F^{ij}g_{ij}+f'\fr{n\la'}{\la F^2}F^{ij}r_{;i}r_{;j}-f''\fr{n}{F^2}F^{ij}r_{;i}r_{;j}.}
At a maximal point of $z$, we use $(\log F)_{;i}=f'r_{;i}$ and
\eqref{xfff}
to obtain
\eq{0\le \cL z&\leq \fr{n}{F^2}F^{ij}r_{;i}r_{;j}\br{f'^2+f'\fr{\la'}{\la}-f''}+\fr{n}{F^2}F^{ij}g_{ij}\br{1-f'\fr{\la'}{\la}}+\fr{c}{F}+c+f'\fr{c}{F}\\
		&=-\fr{\al}{\la'-\al}\fr{n}{F^2}F^{ij}g_{ij}+\fr{c}{F}+c\\
        &<0,}
if $F$ is small enough, since $F^{ij}g_{ij}\ge n$.  
}

Now we finish the a priori estimates in the spherical case.

\begin{prop}\label{LTE-sphere}
Let $x_0(M)$ be the embedding of a closed $n$-dimensional manifold $M$ into $\bbS^{n+1}$, such that $x_0(M)$ is strictly convex. 
Let 
\eq{F=n\fr{H_k}{H_{k-1}}.}
Then any solution $x$ of \eqref{Flow} with initial embedding $x_0$ exists for all positive times with uniform $C^{\8}$-estimates. 
\end{prop}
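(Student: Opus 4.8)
The plan is to reduce \eqref{Flow} to a scalar fully nonlinear parabolic equation over $\bbS^n$, collect the a priori estimates obtained in \cref{sec:Bound-F,sec:Grad-Bound,sec:Pres-Conv,sec:Speed-Bound} into a uniform parabolicity statement, and then run the standard Krylov--Safonov/Evans--Krylov and Schauder machinery to get the uniform $C^\infty$-estimates; longtime existence then follows by a continuation argument. Concretely, since $x_0(M)$ is a closed strictly convex hypersurface of the space form $\bbS^{n+1}_+$ it is a graph over $\bbS^n$, and as long as it stays graphical the flow is equivalent to the scalar equation \eqref{gamma0} for the function $\varphi$ of \eqref{phi}; let $[0,T^*)$ be the maximal interval of existence. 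Strict ellipticity of $F^{ij}$ on convex hypersurfaces makes \eqref{gamma0} strictly parabolic, so the flow is well posed; it remains to show it does not stop in finite time.

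First I would record the geometric bounds. The $C^0$-estimate is \cref{Barriers}: $\inf_{\bbS^n}r_0\le r(t,\cdot)\le\sup_{\bbS^n}r_0$, so the flow stays in a fixed compact subinterval of $(0,\tfrac{\pi}{2})$ and $\lambda,\lambda',1/\lambda,1/\lambda'$ are uniformly controlled. The $C^1$-estimate is \cref{grad-bound}: $|\hat\nabla r|\le c$, whence $v$ and $u=\lambda/v$ are bounded from above, $u$ is bounded from below (either from the gradient and $C^0$ bounds, or by convexity, cf.\ \cite[Lemma~2.7.10]{Gerhardt:/2006}), and the hypersurfaces remain uniformly graphical. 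The $C^2$-estimate comes from \cref{pres-conv-sphere}, which keeps the $M_t$ strictly convex, together with \cref{H-bound-sphere}, which gives $\|A\|^2\le c$; hence $0\le h^i_j\le C$ and, via \eqref{graph-h}, $|\hat\nabla^2\varphi|\le c$.

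Next I would upgrade these to uniform parabolicity. By \cref{F-bound} and \cref{1/H-bound-sphere} one has $0<c_0\le F\le c_1$ along the flow; combined with convexity and the bound on $\|A\|$ this confines the Weingarten eigenvalues to a fixed compact subset $K$ of the closed positive cone $\overline{\G_+}\subset\G_k$ on which $F$ is bounded below, and on such a set the quotient structure of $F=nH_k/H_{k-1}$ yields uniform ellipticity $c_2\,g^{ij}\le F^{ij}\le c_3\,g^{ij}$ (cf.\ \cite{HuiskenSinestrari:09/1999,Gerhardt:/2006}). Transcribing this to \eqref{gamma0}, whose second-order coefficient is $G^{ij}=\tfrac{nv^2}{F^2}F^i_k\tilde g^{kj}$, and using the $C^1$-bound, the operator $G$ is uniformly parabolic with coefficients bounded in $C^1$. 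Moreover, since $F$ is concave and depends affinely on $\hat\nabla^2\varphi$ through the Weingarten map, the right-hand side of \eqref{gamma0} is a convex function of $\hat\nabla^2\varphi$, hence admissible for the Evans--Krylov estimate. That estimate provides a uniform bound for $\varphi$ in the parabolic Hölder space $C^{2+\alpha,1+\alpha/2}$ on $[0,T^*)\times\bbS^n$ for some $\alpha\in(0,1)$, and a standard Schauder bootstrap upgrades this to uniform $C^\infty$-bounds, with constants depending only on $n,k,\sup r_0,\inf r_0$ and $\lambda$, not on $T^*$. Consequently, if $T^*<\infty$ the solution could be continued smoothly past $T^*$ by re-solving \eqref{gamma0} with initial datum $\varphi(T^*,\cdot)$, contradicting maximality; therefore $T^*=\infty$ and the flow exists for all times with uniform $C^\infty$-estimates.

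The step I expect to be the crux is establishing uniform parabolicity: none of the two-sided bound on $F$, the bound on $\|A\|$ and preserved convexity by itself controls the ellipticity constants of $F^{ij}$, and one must in addition verify that the potentially degenerate behaviour of $F^{ij}$ for $F=nH_k/H_{k-1}$ near $\del\G_+$ is excluded by the lower bound $F\ge c_0$; once this is settled, the remaining regularity and continuation arguments are routine.
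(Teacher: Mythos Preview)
Your proposal is correct and follows essentially the same route as the paper: collect the $C^0$--$C^2$ estimates from \cref{Barriers}, \cref{grad-bound}, \cref{pres-conv-sphere} and \cref{H-bound-sphere}, use the two-sided bound on $F$ from \cref{F-bound} and \cref{1/H-bound-sphere} to confine the principal curvatures to a compact subset of the domain of $F$ and hence obtain uniform parabolicity, then apply Krylov's $C^{2,\alpha}$ theory for concave (equivalently, convex) fully nonlinear operators together with Schauder bootstrapping, and conclude by continuation. The paper's own proof is a terse version of exactly this; your only addition is to spell out the reduction to the scalar graph equation \eqref{gamma0} and to flag the uniform-ellipticity step as the crux, which the paper handles in one sentence by noting that the curvatures lie in a compact subset of the domain of definition of $F$.
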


\pf{
We have uniform $C^2$-bounds from \cref{pres-conv-sphere} and \cref{H-bound-sphere}. Due to \cref{1/H-bound-sphere} we know that the principal curvatures range within a compact subset of the domain on definition of $F$. Hence we have the uniform parabolicity of the operator $\cL$. Due to the concavity of the operator, we can apply the regularity theory of Krylov and Safonov, \cite{Krylov:/1987}, to deduce $C^{2,\al}$ bounds and in turn $C^{\8}$ bounds using the Schauder theory. Thus we can extend the flow beyond any finite $T$.
}

\subsection{The general case}

We provide the bounds on the principal curvatures and on the curvature function from below in case of mild assumptions on the warping factor.

\begin{prop}\label{1/F-bound-general}
Let $a,b\in\R$ and $(N,\bar g)$ be the warped space $((a,b)\x \bbS^n,dr^2+\la^2(r)\s)$ with $\la>0$, $\la'>0$ and $\la''\geq 0$.
Let $F\in C^{\8}(\G)$ be a $1$-homogeneous, strictly monotone and concave curvature function
and let $x_0(M)$ be the embedding of a closed $n$-dimensional manifold $M$ into $N$, such that $x_0(M)$ is a graph over the domain $\bbS^n$ and such that $\ka\in \G$ for all $n$-tupels of principal curvatures along $x_0(M)$. Then along any solution $x$ of \eqref{Flow} with initial embedding $x_0$ there exists a positive constant $c=c(n,\sup r_0,\inf r_0,\la)$, such that
\eq{F\geq c.}
\end{prop}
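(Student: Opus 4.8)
The plan is to adapt the auxiliary function argument from the spherical case, \cref{1/H-bound-sphere}, replacing the explicit warping factor identities by inequalities valid under the sole assumptions $\la>0$, $\la'>0$, $\la''\geq 0$. First I would fix the barriers: by \cref{Barriers} we have $\inf r_0\le r\le\sup r_0$ along the flow, so $\la$, $\la'$, and all quantities built from them are pinched between positive constants; in particular $u=\la/v\ge c_0>0$ by \cref{grad-bound}. The candidate auxiliary function is
\eq{z=-\log F+f(r),}
with $f$ chosen so that the dangerous zero-order terms in $\cL F$ get absorbed. Concretely I expect to take $f$ of the form $f(r)=-\log(\la'(r)-\al)$ for a small positive constant $\al<\tfrac12\inf_{[\inf r_0,\sup r_0]}\la'$, or else a simple exponential $f(r)=Ar$ with $A$ large; the precise choice is dictated by which sign conditions one needs from $f'$ and $f''$.

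Next I would compute $\cL z$ using \eqref{Ev-r} and \eqref{EV-F}. The key structural features to exploit are: (i) the gradient term $-\tfrac{2n}{F^4}F^{ij}F_{;i}F_{;j}\le 0$ and, at an interior maximum of $z$, the relation $(\log F)_{;i}=f'r_{;i}$, which lets one complete a square in the $F^{ij}r_{;i}r_{;j}$ terms; (ii) the ellipticity bound $F^{ij}g_{ij}\ge F^{ij}g_{ij}\ge n$ (indeed $\ge c(n)F^{-1}\cdot F = $ bounded below by a positive multiple of $\tr g$ via $1$-homogeneity and concavity), giving a strictly negative leading term $-\const\cdot\tfrac{n}{F^2}F^{ij}g_{ij}$ once $\al>0$; (iii) the ambient curvature terms in \eqref{EV-F}: using \eqref{Ricci-WP} and the graph relation \eqref{graph-h}, the term $F^{ij}\bar R_{\al\be\g\de}\nu^\al x^\be_{;i}x^\g_{;m}x^\de_{;j}r_{;}^m$ and the term $F^{ij}\bar R_{\al\be\g\de}x^\al_{;i}\nu^\be\nu^\g x^\de_{;j}$ are both controlled by $\|\n r\|^2$ and $F^{ij}g_{ij}$ times bounded functions of $\la,\la',\la'',\la'''$, hence by the gradient bound they contribute at most $C\,\tfrac{n}{F^2}F^{ij}g_{ij}+C/F$. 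The point where $\la''\ge 0$ enters is precisely in the sign of $\tfrac{u^2\la''}{\la\la'^2}F$ (after dividing by $F$ this becomes a bounded additive term) and in ensuring that the coefficient $\tfrac{u}{\la'^2}\big(\tfrac{\la'\la''}{\la}-\la'''+\tfrac{2\la''^2}{\la'}\big)$ of $F^{ij}r_{;i}r_{;j}$, together with the contribution from $f''\cL r$, can be dominated after completing the square — here one may additionally need the convexity/concavity of $F$ and, if $\la'''$ is not sign-controlled, to choose $f$ so that $f'^2+f'\tfrac{\la'}{\la}-f''$ has a favorable sign. Collecting terms, at a maximum point of $z$ one arrives at
\eq{0\le\cL z\le -c_1\,\fr{n}{F^2}F^{ij}g_{ij}+\fr{C}{F}+C,}
which is a contradiction if $F$ is too small, yielding $z\le C$ and hence $F\ge c>0$.

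The main obstacle I anticipate is item (iii) combined with the $\la'''$ terms: unlike the spherical case, where $\la'''=\tfrac{\la'\la''}{\la}$ makes several coefficients vanish identically, here one only has $\la''\ge 0$ with no control on $\la'''$, so the coefficient of $F^{ij}r_{;i}r_{;j}$ need not have a good sign. Overcoming this requires either a cleverer choice of $f$ (making $f'r_{;i}$ cancel the bad term via the completed square, as in \eqref{xfff}) or observing that since $r$ ranges in a compact interval, $\la'''$ is automatically bounded and the corresponding term is $O(1)\cdot\tfrac{n}{F^2}F^{ij}r_{;i}r_{;j}\le O(1)\cdot\tfrac{n}{F^2}F^{ij}g_{ij}$, which is then swallowed by the negative leading term provided $\al$ (equivalently $c_1$) is chosen appropriately. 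A secondary technical point is justifying the maximum-principle step rigorously when $F$ is merely $C^\infty$ on the cone $\G$ and not all of $\R^n$; but the barriers for $r$ and the gradient estimate keep the principal curvatures in a region where $F$ and its derivatives are smooth, so this causes no real difficulty.
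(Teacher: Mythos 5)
Your plan transplants the auxiliary-function argument of \cref{1/H-bound-sphere} to the general warped product, but it overlooks the term that makes that argument work only in the convex spherical setting. In $\cL z$ with $z=-\log F+f(r)$, the contribution $-\tfrac1F\cL F$ produces, from the second term of \eqref{EV-F}, the \emph{positive} quantity $\tfrac{n}{F^2}\bigl(F^{ij}h_{ik}h^k_j-\tfrac1n F^2\bigr)$. In \cref{1/H-bound-sphere} this is absorbed via $\tfrac{n}{F^2}F^{ij}h_{ik}h^k_j\le \tfrac{nH}{F}\le\tfrac cF$, which uses the preserved convexity and the upper bound on $H$ from \cref{H-bound-sphere}. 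Under the hypotheses of \cref{1/F-bound-general} neither ingredient is available: the cone $\G$ is general (no convexity), and the bound $\|A\|^2\le c$ is only proved afterwards in \cref{A-bound-general}, whose proof in turn requires $F\ge c$ (the terms $\al C/F$ there must stay bounded), so invoking it here would be circular. For $F=H$, say, the offending term equals $\tfrac{n\|A\|^2}{H^2}-1$ and cannot be dominated by $-c_1\tfrac{n}{F^2}F^{ij}g_{ij}+\tfrac CF+C$; hence the final display of your plan does not follow. By contrast, the items you flag as the main obstacles ($\la'''$ and the ambient curvature terms) are harmless: since $r$ ranges in a compact interval all those coefficients are bounded, and they are absorbed by choosing $\al$ large, exactly as you suggest.

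The paper's proof avoids curvature evolution equations altogether. It writes the flow as the scalar parabolic equation \eqref{gamma0} for $\p$ and differentiates in time, obtaining
\eq{\del_t(\del_t\p)=G^{ij}(\del_t\p)_{ij}+G^{\p_p}(\del_t\p)_{p}+G^{\p}\del_t\p,}
and then observes from \eqref{G-phi}, from $F^i_i\ge n$ (concavity plus normalization) and from $\la''\ge0$ that
\eq{G^{\p}\leq-\fr{\la''\la}{v^2}\br{\del_t\p+\fr{1}{\la'}}^2+\fr{\la''\la}{\la'^2},}
so the zero-order coefficient is favorable at a large positive maximum of $\del_t\p=\tfrac{nv^2}{F}-\tfrac{1}{\la'}$. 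The maximum principle then bounds $\del_t\p$ from above, and with the $C^1$-estimate $v\le c$ this yields $F\ge c>0$. No second fundamental form appears in the zero-order term, which is precisely what your route lacks. To salvage your approach you would have to pair $-\log F$ with a function of $u$ whose contribution cancels $\tfrac{n}{F^2}F^{ij}h_{ik}h^k_j$ (as is done via $1+f'u<0$ in the proof of \cref{A-bound-general}), which amounts to a genuinely different argument.
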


\begin{rem}Proposition \ref{1/F-bound-general} is the only place where we use $\la''\ge 0$ for proving Theorem \ref{General-flow-main}.
\end{rem}
\pf{
We deduce the evolution of the function $\del_t\p$, where $\p$ is defined as in \eqref{phi}. Recall that there holds \eqref{gamma0},
\eq{
\del_t\varphi&=\frac{1}{\la}\left(\frac{n}{F\left(\frac{\la'}{\la v}\de_i^j-\frac{1}{\la v}\tilde{g}^{jk}\varphi_{ki}\right)}-\frac{u}{\la'}\right)v\\&=\frac{n v^2}{F(\la'\de_i^j- \tilde{g}^{jk}\varphi_{ki})}-\frac{1}{\la'}=:G(\varphi, \hat{\n} \varphi, \hat{\n}^2 \varphi),}
where $\tilde{g}^{ij}=\s^{ij}-\frac{\varphi^i\varphi^j}{v^2}.$
Differentiation gives
\eq{\del_t(\del_t\p)&=G^{ij}(\del_t\p)_{ij}+G^{\p_p}(\del_t\p)_p +G^{\p}\del_t\p.}
From \eqref{G-phi} we obtain 
\eq{G^{\p}\leq-\fr{n^2v^2\la''\la}{F^2}+\fr{\la''\la}{\la'^2}=-\fr{\la''\la}{v^2}\fr{n^2v^4}{F^2}+\fr{\la''\la}{\la'^2}=-\fr{\la''\la}{v^2}\br{\del_{t}\p+\fr{1}{\la'}}^2+\fr{\la''\la}{\la'^2}.}
Since we already have $v\leq c$ due to \cref{grad-bound}, the third order leading term is dominating with a non-positive sign. The maximum principle gives an upper bound for $\del_t\p$ and hence the result.
}

\begin{prop}\label{A-bound-general}
Under the assumptions of \cref{1/F-bound-general} there exists a positive constant $c=c(n,\sup r_0,\inf r_0,\la)$, such that
\eq{\|A\|^{2}\leq c.}
\end{prop}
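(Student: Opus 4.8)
The plan is to bound the largest principal curvature $\ka_{\max}$ of the flow hypersurfaces from above; once this is done the claimed estimate $\|A\|^2\le c$ follows immediately. Indeed, concavity and $1$-homogeneity of $F$ give, for every $n$-tuple $\ka\in\G$,
\[ F(\ka)\le F(1,\dots,1)+\sum_i\del_{\ka_i}F(1,\dots,1)(\ka_i-1)=\sum_i\del_{\ka_i}F(1,\dots,1)\,\ka_i, \]
and since the partial derivatives at $(1,\dots,1)$ are all equal and, by Euler's relation, sum to $F(1,\dots,1)=n$, this reads $F(\ka)\le\sum_i\ka_i=H$. Hence $H\ge F\ge c>0$ by \cref{1/F-bound-general}, and combined with $\ka_{\max}\le C$ we obtain $\ka_{\min}\ge H-(n-1)\ka_{\max}\ge c-(n-1)C$, so all principal curvatures, and thus $\|A\|^2$, are bounded. (As a by-product the $\ka$-tuple stays in a fixed compact subset of $\G$, which is precisely what is needed afterwards for the uniform parabolicity of $\cL$.) Throughout one uses the ingredients already available: by \cref{Barriers} the radial coordinate stays in $[\inf r_0,\sup r_0]$, so $\la,\la',\la''$ and the ambient curvature of $N$ are bounded along the flow and $\la'$ is bounded below; by \cref{grad-bound} the function $v$ is bounded, hence $u=\la/v$ is bounded above and below by positive constants.

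To bound $\ka_{\max}$ I would apply the maximum principle to a test function of the form $w=\log\ka_{\max}-\log u+\mu\p$, with $\p$ as in \eqref{phi} and $\mu>0$ to be chosen large. Since $\ka_{\max}$ is in general not differentiable, at a point where $w$ attains a new interior maximum one replaces it by the smooth component $h^n_n$ in suitably adapted normal coordinates, exactly as in the proof of \cref{pres-conv-sphere}. Then $\cL\log\ka_{\max}$ is computed from \eqref{ev-h}, $\cL\log u$ from \eqref{Ev-u}, and $\cL\p$ from \eqref{graph-h} together with \eqref{Ev-r}. The decisive point is algebraic: the cubic term $\fr{n}{F^2}F^{kl}h_{rk}h^r_l$ occurs with the same coefficient in $\cL\log\ka_{\max}$ (coming from $\fr{n}{F^2}F^{kl}h_{rk}h^r_l\,h^n_n$ divided by $h^n_n$) and in $\cL\log u$ (from the $\fr{n}{F^2}F^{ij}h_{ik}h^k_j$ term in $\cL u$), so it cancels in $\cL w$; what survives from the $h$-evolution as leading term is $-\fr{2n}{F}\ka_{\max}$, which carries the favourable sign.

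It then remains to absorb the lower-order terms. The second-order term $\fr{n}{F^2}F^{kl,rs}h_{kl;n}{h_{rs;}}^{n}$ is $\le 0$ by concavity of $F$; the genuine gradient terms — such as $-\fr{2n}{F^3}(F_{;n})^{2}$ and the $\la''$-weighted $u_{;n}r_{;n}$ term — are handled at the critical point via the relation $\n\log\ka_{\max}=\n\log u-\mu\n\p$ combined with Cauchy--Schwarz, using $u_{;i}=\la h^k_i r_{;k}$ and the boundedness of $\la,\la',\la'',v,u^{-1}$; and the ambient-curvature contributions, which are controlled up to a multiple of $\fr{1}{F^2}F^{ij}g_{ij}=\fr{1}{F^2}\sum_kF^{kk}$, are dominated by the negative term $-\mu\fr{n\la'}{\la F^2}\sum_kF^{kk}$ produced by $\mu\p$ once $\mu$ is large. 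Putting everything together yields $\cL w<0$ at the maximum whenever $\ka_{\max}$ is large, a contradiction, hence $\ka_{\max}\le C$.

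The main obstacle is that, after the cancellation above, the leading term $-\fr{2n}{F}\ka_{\max}$ only drives $\cL w$ to $-\8$ if $F$ is bounded \emph{from above}; if instead $F$ were comparable to $\ka_{\max}$, all terms of $\cL w$ would stay bounded and the maximum principle would be inconclusive. One therefore has to run this estimate together with an a priori upper bound for $F$: for $F=n\fr{H_k}{H_{k-1}}$ this bound is the (warped-product) analogue of \cref{F-bound}, and for a general $F$ as in \cref{F} it is obtained by an analogous argument; the same upper bound keeps the factors $\fr{1}{F^2}\sum_kF^{kk}$ in the ambient terms under control. I expect this interplay — bounding $\ka_{\max}$ and $F$ simultaneously, with the curvature of the ambient warped product entering only through bounded coefficients — to be the crux of the proof.
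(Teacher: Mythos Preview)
Your test function $w=\log\ka_{\max}-\log u+\mu\p$ is the right kind of object, but the specific choice $-\log u$ creates the very obstacle you identify at the end. With $f(u)=-\log u$ one has $f'u=-1$, so in $\cL w$ the coefficient of the cubic term $\fr{n}{F^{2}}F^{kl}h_{rk}h^{r}_{l}$ is $1+f'u=0$; it cancels \emph{exactly}, and the only surviving ``good'' second-order term is $-\fr{2n}{F}\ka_{n}$. As you say, this forces you to bound $F$ from above first. But the upper bound on $F$ in \cref{F-bound} genuinely uses the special structure of $n H_{k}/H_{k-1}$ (both the inequality $F^{ii}h_{ii}^{2}\ge\fr1nF^{2}$ and, crucially, the bound $\sum_{i}F^{i}_{i}\le C(n,k)$); for a general concave $F$ satisfying \cref{F} there is no such a priori upper bound available, and your assertion that it ``is obtained by an analogous argument'' is not justified.

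The paper avoids this circularity by a small but essential change: take $f(u)=-\log(u-\be)$ with $\be=\fr12\min u$, so that $1+f'u=-\be/(u-\be)<0$. Then the term $\fr{n}{F^{2}}F^{kl}h_{rk}h^{r}_{l}(1+f'u)$ survives with a strictly negative coefficient; this is the dominant term (of order $\ka_{n}^{2}F^{ij}g_{ij}/F^{2}$ in the worst case) and no upper bound on $F$ is needed. A second point you gloss over is the bad gradient term $+\fr{n}{F^{2}}F^{ij}(\log h^{n}_{n})_{;i}(\log h^{n}_{n})_{;j}$ arising from the logarithm. It cannot be disposed of by Cauchy--Schwarz alone: the paper splits into two cases ($\ka_{1}<-\e_{1}\ka_{n}$ versus $\ka_{1}\ge-\e_{1}\ka_{n}$) and in the second case combines this gradient term with the concavity term $\fr{n}{F^{2}}F^{kl,rs}h_{kl;n}{h_{rs;}}^{n}$ via the inequality
\eq{F^{kl,rs}\eta_{kl}\eta_{rs}\le \fr{2}{\ka_{n}-\ka_{1}}\sum_{k}(F^{nn}-F^{kk})\eta_{nk}^{2},}
together with the Codazzi equation, to absorb it into $\fr{n}{F^{2}}F^{nn}\ka_{n}^{2}(1+f'u)$. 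Throwing away the $F^{kl,rs}$ term, as you do, loses exactly the piece needed here.
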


\pf{In applying the maximum principle to the evolution of $(h^{i}_{j})$ we proceed similarly to the proof of \cref{pres-conv-sphere}. Define
\eq{\phi=\sup\{h_{ij}\eta^i\eta^j\cn g_{ij}\eta^i\eta^j=1\}}
and suppose the function
\eq{w=\log \phi+f(u)+\al r}
 attains a maximum at $(t_0,\xi_0)$, $t_0<T_0,$ where $f$ is defined by
 \eq{f(u)=-\log(u-\be),}
 where $\beta=\frac12\min u$.
 Note that
 \eq{1+f'u=\frac{-\beta}{u-\beta}<0.}
 Using normal coordinates around $(t_0,\xi_0)$ with
\eq{g_{ij}=\de_{ij},\q h_{ij}=\ka_i\de_{ij},\q \ka_1\leq \dots\leq \ka_n,}
and using \eqref{Ev-r}, \eqref{Ev-u} and \eqref{ev-h},
we may pretend that the evolution equation of $w$ at the point $(t_{0},\xi_{0})$ is given by
\eq{\label{A-bound-general-1}\cL w&\leq  \fr{n}{F^{2}}\fr{2}{\ka_{n}-\ka_{1}}\sum_{k=1}^{n}(F^{nn}-F^{kk})(h_{nk;n})^{2}(h^{n}_{n})^{-1}+c+\fr{c}{\ka_{n}}+\fr{n}{F^{2}}F^{kl}h_{rk}h^{r}_{l}\\
				&\hp{=}-\fr{2n}{F}\ka_{n}+\fr{c(1+\ka_n^{-1})}{F^2}F^{ij}g_{ij}+\fr{n}{F^{2}}F^{{ij}}(\log h^{n}_{n})_{;i}(\log h^{n}_{n})_{;j}\\
					&\hp{=}+\fr{n}{F^{2}}\br{F^{kl}h_{rk}h^{r}_{l}-\fr 1n F^{2}}f'u-f'\fr{\la''\la}{\la'^{2}}\|\n r\|^{2}u+c|f'|\\
					&\hp{=}-f''\fr{n}{F^{2}}F^{ij}u_{;i}u_{;j}+\fr{\al c}{F}-\al\fr{\la}{\la'}-\fr{n\al\la'}{\la F^{2}}F^{ij}(g_{ij}-r_{;i}r_{;j}),}
where we used a trick that already appeared in the proof of \cite[Prop.~6.3]{Enz:10/2008} and in a similar fashion in \cite[Thm.~9.7]{Gerhardt:/2003}, namely that due to the concavity of $F$ there holds
\eq{F^{kl,rs}\eta_{kl}\eta_{rs}\leq \sum_{k\neq l}\fr{F^{kk}-F^{ll}}{\ka_{k}-\ka_{l}}\eta_{kl}^{2}\leq \fr{2}{\ka_{n}-\ka_{1}}\sum_{k=1}^{n}(F^{nn}-F^{kk})\eta_{nk}^{2}}
for all symmetric matrices $(\eta_{kl}),$
cf. \cite[Lemma~2.1.14]{Gerhardt:/2006}. 
Furthermore we have 
\eq{F^{nn}\leq \dots \leq F^{11},}
cf. \cite[Lemma~2]{EckerHuisken:02/1989}.
In order to estimate \eqref{A-bound-general-1}, we distinguish two cases.

{\bf{Case 1:}} $\ka_{1}<-\e_{1}\ka_{n}$, $0<\e_{1}<\fr 12$. Then
\eq{F^{ij}h_{ik}h^{k}_{j}\geq F^{11}\ka_{1}^{2}\geq \fr{1}{n}F^{ij}g_{ij}\ka_{1}^{2}\geq \fr 1n F^{ij}g_{ij}\e_{1}^{2}\ka_{n}^{2}.}
We use $\n w=0$ to estimate
\eq{\fr{n}{F^{2}}F^{ij}(\log h^{n}_{n})_{;i}(\log h^{n}_{n})_{;j}=f'^{2}\fr{n}{F^{2}}F^{ij}u_{;i}u_{;j}+f'\fr{2n\al}{F^{2}}F^{ij}u_{;i}r_{;j}+\fr{n\al^{2}}{F^{2}}F^{ij}r_{;i}r_{;j}.}
If $\ka_{n}$ is sufficiently large, in this case \eqref{A-bound-general-1} becomes
\eq{\cL w&\leq \fr{1}{F^{2}}F^{ij}g_{ij}(\e_{1}^{2}\ka_{n}^{2}(1+f'u)+(c+|f'|\al)\ka_{n}+c\al^{2}+c)+c(|f'|+1)\\
	&\hp{=}-\fr{2n}{F}(\ka_n-\al c)-\al\fr{\la}{\la'}-\fr{n}{F^{2}}F^{ij}u_{;i}u_{;j}(f''-f'^{2}),}
which is negative for large $\ka_{n}$, after fixing $\al_0=\al_0(M_0,\sup r_0,\inf r_0,\la)$ large enough to ensure
\eq{c(|f'|+1)-\al_0\fr{\la}{\la'}<0.}
We also use $1+f'u\leq c<0$ and $f''-f'^{2}=0$. Hence in this case any $\al\geq \al_0$ yields an upper bound for $\ka_{n}$.

{\bf{Case 2:}} $\ka_{1}\geq -\e_{1}\ka_{n}.$ Then
\eq{&\fr{2}{\ka_{n}-\ka_{1}}\sum_{k=1}^{n}(F^{nn}-F^{kk})(h_{nk;n})^{2}(h^{n}_{n})^{-1}\\
	\leq~& \fr{2}{1+\e_{1}}\sum_{k=1}^{n}(F^{nn}-F^{kk})(h_{nk;n})^{2}(h^{n}_{n})^{-2}\\
	\leq~&\fr{2}{1+\e_{1}}\sum_{k=1}^{n}(F^{nn}-F^{kk})(h_{nn;k})^{2}(h^{n}_{n})^{-2}+c(\e_{1})\sum_{k=1}^{n}(F^{kk}-F^{nn})\ka_{n}^{-2}\\
	&\hp{=}+\fr{4}{1+\e_{1}}\sum_{k=1}^{n}(F^{nn}-F^{kk})h_{nn;k}\bar{R}_{\al\be\g\de}\nu^{a}x^{\be}_{\ ;n}x^{\g}_{\ ;n}x^{\de}_{\ ;k}(h^{n}_{n})^{-2}\\
	\leq~& \fr{2}{1+2\e_{1}}\sum_{k=1}^{n}(F^{nn}-F^{kk})(h_{nn;k})^{2}(h^{n}_{n})^{-2}+c(\e_{1})\sum_{k=1}^{n}(F^{kk}-F^{nn})\ka_{n}^{-2}, }
where we used the Codazzi equation \eqref{Codazzi} and the Cauchy-Schwarz inequality.
We deduce further:
\eq{&F^{ij}(\log h^{n}_{n})_{;i}(\log h^{n}_{n})_{;j}+\fr{2}{\ka_{n}-\ka_{1}}\sum_{k=1}^{n}(F^{nn}-F^{kk})(h_{nk;n})^{2}(h^{n}_{n})^{-1}\\
	\leq~&\fr{2}{1+2\e_{1}}\sum_{k=1}^{n}F^{nn}(\log h^{n}_{n})_{;k}^{2}-\fr{1-2\e_{1}}{1+2\e_{1}}\sum_{k=1}^{n}F^{kk}(\log h^{n}_{n})_{;k}^{2}+c(\e_{1})F^{ij}g_{ij}\ka_{n}^{-2}\\
	\leq~&\sum_{k=1}^{n}F^{nn}(\log h^{n}_{n})_{;k}^{2}+c(\e_{1})F^{ij}g_{ij}\ka_{n}^{-2}\\
	=~&c(\e_1)F^{ij}g_{ij}\ka_n^{-2}+f'^{2}F^{nn}\|\n u\|^{2}+2\al f'F^{nn}\ip{\n u}{\n r}+\al^{2}F^{nn}\|\n r\|^{2}.}
We plug this into \eqref{A-bound-general-1} and obtain for large $\ka_{n}$:
\eq{\cL w&\leq c+\fr{n}{F^2}F^{nn}\ka_n^2(1+f'u)-\fr{2n}{F}(\ka_n-\al c)+\fr{1}{F^2}F^{ij}g_{ij}\br{c+c(\e_1)-\fr{n\al\la'}{v^2 \la}}\\
		&\hp{=}-f''\fr{n}{F^2}F^{ij}u_{;i}u_{;j}-\al\fr{\la}{\la'}+f'^2\fr{n}{F^2}F^{nn}\|\n u\|^2+\fr{2n\al f'}{F^{2}}F^{nn}\ip{\n u}{\n r}\\
        &\hp{=}+\fr{n\al^{2}}{F^{2}}F^{nn}\|\n r\|^{2}\\
		&\leq \fr{n}{F^{2}}F^{nn}\br{\ka_{n}^{2}(1+f'u)+2\al |f'|c\ka_{n}+\al^{2}\|\n r\|^{2}}-\fr{2n}{F}(\ka_{n}-\al c)\\
        &\hp{=}+c-\al\fr{\la}{\la'}+\fr{1}{F^2}F^{ij}g_{ij}\br{c+c(\e_1)-\fr{n\al\la'}{v^2 \la}}\\
		&<0}
after possibly enlarging $\al$ even further (compared to case 1) and for large $\ka_{n}$. This completes the proof.
}

As in \cref{LTE-sphere} we conclude:

\begin{prop}\label{LTE-general}
Under the assumptions of \cref{A-bound-general} the flow \eqref{Flow} exists for all times with uniform $C^{\8}$-estimates.
\end{prop}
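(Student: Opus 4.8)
The plan is to argue exactly as in the proof of \cref{LTE-sphere}: assemble the a priori estimates from the previous sections into a uniform parabolicity statement for the scalar equation \eqref{gamma0}, invoke the Krylov--Safonov and Schauder regularity theory to upgrade to uniform $C^{\8}$ bounds, and then run the standard continuation argument to rule out a finite maximal existence time.

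First I would collect the lower-order estimates. \cref{Barriers} gives, by the maximum principle applied to \eqref{Ev-r} (equivalently to \eqref{gamma0}), the uniform two-sided bound $\inf r_0 \le r \le \sup r_0$ along the flow, hence a $C^0$ estimate; in particular the flow hypersurfaces stay in a fixed compact region of $N$, bounded away from the ends $\{a\}\x\bbS^n$ and $\{b\}\x\bbS^n$. Next, \cref{grad-bound} gives a uniform bound $|\hat\n r| \le c$, hence a $C^1$ estimate, which is equivalent to uniform bounds $1 \le v \le c$ and $0 < c^{-1} \le u$. These bounds guarantee that the flow remains a graph with controlled gradient and that the lower-order coefficients of the scalar operator stay bounded.

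For the $C^2$ side, \cref{A-bound-general} supplies a uniform bound $\|A\|^2 \le c$ on the second fundamental form, while \cref{1/F-bound-general} supplies a uniform positive lower bound $F \ge c > 0$. The key point --- and the only place in this argument where some care is needed --- is that these two facts together confine the unordered $n$-tuple of principal curvatures to a fixed compact $K \sub \G$: the set $\{\ka \in \ov\G \cn F(\ka) \ge c,\ |\ka|^2 \le c'\}$ is closed and bounded in $\R^n$, and it cannot touch $\del\G$ because $F$ extends continuously to $\ov\G$ with $F_{|\del\G} = 0$ (\cref{F}). Since $F^{ij}$ is uniformly positive definite on $K$, the operator $\cL$ of \cref{Ev-Eq} is uniformly parabolic along the flow, with ellipticity constants depending only on $n$, $\sup r_0$, $\inf r_0$ and $\la$.

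Finally, using that $F$ is concave (\cref{F}), the scalar flow equation \eqref{gamma0} is a uniformly parabolic, concave fully nonlinear equation for $\p$ with, by the above, uniformly bounded $C^2$ norm; the regularity theory of Krylov and Safonov \cite{Krylov:/1987} then yields a uniform $C^{2,\al}$ estimate, and the linear Schauder theory bootstraps this to uniform $C^{m,\al}$ estimates for every $m$, i.e. uniform $C^{\8}$ estimates on any finite time interval. Combined with short-time existence for the quasilinear parabolic equation \eqref{gamma0}, these estimates force the maximal time $T^*$ to be infinite: were $T^* < \8$, the uniform $C^{\8}$ bounds would produce a smooth limit hypersurface at $t = T^*$ from which the flow could be restarted, contradicting maximality of $T^*$. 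Hence \eqref{Flow} exists for all positive times with the asserted uniform estimates. I expect no essential difficulty beyond the compactness argument of the previous paragraph, which is exactly where the lower bound $F \ge c$ from \cref{1/F-bound-general} is indispensable.
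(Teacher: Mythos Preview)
Your proposal is correct and follows exactly the approach the paper intends: the paper's own proof is literally the one line ``As in \cref{LTE-sphere} we conclude,'' and you have faithfully unpacked that template, citing the same ingredients (\cref{Barriers}, \cref{grad-bound}, \cref{1/F-bound-general}, \cref{A-bound-general}, the boundary behaviour $F_{|\del\G}=0$ from \cref{F}, concavity, Krylov--Safonov, Schauder, continuation). One cosmetic slip: in the last paragraph you call \eqref{gamma0} ``quasilinear,'' but for general $F$ it is fully nonlinear (as you yourself wrote two lines earlier); this does not affect the argument.
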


\medskip

\section{Proofs of the main theorems}\label{sec:Pf-Main}

We give the final arguments to complete the proofs concerning the flow results and start with the spherical case.

\begin{proof}[Proof of \cref{Sphereflow-main}]\label{Pf-Sphereflow-main}

In order to complete the proof of \cref{Sphereflow-main} with the help of \cref{LTE-sphere}, all we have to show is that each subsequential limit is a sphere independent of the subsequence as $t\ra \8$.

The evolution of the weighted enclosed volume
\eq{V(t)=\int_{\Om_{t}}\la' dN} is
\eq{\dot{V}(t)=\int_{M_{t}}\br{\fr{n\la'}{F}-u}d\mu_t\geq \int_{M_{t}}\br{\fr{n\la'}{H}-u}d\mu_t\ge 0.}
The first inequality is due to the concavity of $F$ which implies $F\leq H,$ \cite[Lemma 2.2.20]{Gerhardt:/2006} and the second one is
due to Brendle's Heintze-Karcher type inequality, \cite[equ.~(4)]{Brendle:06/2013}. That is, $V$ is increasing. Since $V$ is obviously bounded we have
\eq{\int_{0}^{\8}\int_{M_{t}}\br{\fr{n\la'}{H}-u}~d\mu_{t}dt<\8}
and hence
\eq{\int_{M_{t}}\br{\fr{n\la'}{H}-u}~d\mu_{t}\ra 0.}
So any convergent subsequence of $M_{t}$ must converge to a sphere, due to the characterization of the limiting case in the Heintze-Karcher inequality. Due to the spherical barriers this sphere is unique and we conclude the proof of the theorem.
\end{proof}

Now we turn to the other case and prove \cref{General-flow-main}.

\begin{proof}[Proof of \cref{General-flow-main}]\label{Pf-General-flow-main}
 Again it suffices to prove that there exists a subsequence that converges to a sphere. If no subsequence converges to a geodesic sphere then there can not be any subsequence for which $\|\n r\|\ra 0$. Hence there exists a positive constant $c$ such that for all times $t>0$ we have
 \eq{\label{General-pf-1}\max_{M_{t}}\|\n r\|^{2}\geq c.}
 The area evolves according to
 \eq{\label{Mon-area}\fr{d}{dt}|M_t|=\int_{M_{t}}\cF H\geq \int_{M_{t}}\br{n-\fr{Hu}{\la'}}=\int_{M_{t}}\fr{{\rm div}(\la\n r)}{\la'}=\int_{M_{t}}\fr{\la''\la}{\la'^{2}}\|\n r\|^{2}\ge 0.}
 The inequality in \eqref{Mon-area} is again due to $F\le H$. The last two equalities in \eqref{Mon-area} follow from the fact ${\rm div}(\la\n r)= n\la'-Hu$ and integration by parts respectively.
 
Due to the $C^{1}$-estimates the area is bounded and hence, because of $\la''\geq 0$, every subsequential limit $M_{t}\ra \ti M$ must satisfy
\eq{\int_{\ti M}\fr{\la''\la}{\la'^{2}}\|\n r\|^{2}=0,}
whence
\eq{\label{General-pf-2}\la''\|\n r\|^{2}=0}
throughout any subsequential limit. For all $t>0$, let
\eq{\xi_{t}:=\argmax_{M_{t}} \|\n r\|^{2}.}
We obtain that
\eq{\la''(\xi_{t})\ra 0,\q t\ra 0,}
for otherwise we reach a contradiction to \eqref{General-pf-1} and \eqref{General-pf-2}.
From \eqref{C1eq3} we obtain at the points $(t,\xi_{t})$,
\eq{\cL|\hat \n\p|^{2}&\leq -2G^{ij}\s_{ij}|\hat\n\p|^{2}+2G^{ij}\p_{i}\p_{j}+2G^{\p}|\hat\n\p|^{2}\\
			&\leq-\fr{2nv^{2}}{F^{2}}F^{i}_{k}\ti{g}^{kj}\s_{ij}|\hat\n\p|^{2}+\fr{2nv^{2}}{F^{2}}F^{i}_{k}\ti{g}^{kj}\p_{i}\p_{j}+c\la''|\hat\n\p|^{2}\\
			&\leq -\e|\hat\n\p|^{2},}
for some suitable $\e>0$. Thus $|\hat\n\p|^{2}$ actually has to decay exponentially and we obtain a contradiction to \eqref{General-pf-1}. 
 \end{proof}
 
 \medskip
 
\section{Geometric inequalities}\label{sec:Geom-Ineq}

In this section we complete the proof of the geometric inequalities. First of all, along the flow $\frac{d}{dt}x=\mathcal{F}\nu,$ we have the following variational formulas.
\begin{prop}\label{evovl-integ-quant}
Let $M_t\subset N$ be a family of closed hypersurfaces evolving by $\frac{d}{dt}x=\mathcal{F}\nu$. Denote by $\Om_t$ the enclosed domain by $M_{t}$ and $\{a\}\x \bbS^{n}$. Then
\begin{eqnarray}\label{e0}
\frac{d}{dt}\int_{\Omega_t} f=\int_{M_t}f \mathcal{F} \q\fa f\in C^{\infty}(M),
\end{eqnarray}
and
\begin{eqnarray}\label{e1}
\frac{d}{dt}|M_t|=\int_{M_t} H\cF.
\end{eqnarray}
If $\bar{\Delta}\la'\bar g-\bar{\nabla}^2\la' +\la'\ov\Rc =0$, then
\begin{eqnarray}\label{e2}
\frac{d}{dt}\int_{M_t} H\la'= \int_{M_t} (2\sigma_2 \la'+2H\left<\bar{\nabla} f,\nu\right>)\cF.
\end{eqnarray}
\end{prop}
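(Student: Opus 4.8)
The plan is to prove the three identities separately. Throughout, $\cF$ is an arbitrary smooth normal speed and all time derivatives are taken along the flow map; \eqref{e0} and \eqref{e1} are routine first-variation formulas, while \eqref{e2} is the substantial computation, in which the staticity hypothesis does the essential work.

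For \eqref{e0}, note that $\del\Om_t$ is the union of the moving hypersurface $M_t$ and the fixed horizon $\{a\}\x\bbS^n$; only $M_t$ moves, with outward normal speed $\cF$. The transport theorem for a domain with moving boundary then gives $\frac{d}{dt}\int_{\Om_t}f=\int_{M_t}f\cF\,d\mu_t$, the stationary part of $\del\Om_t$ contributing nothing. (Equivalently, parametrise $\Om_t=\{(s,y)\cn a\le s\le r(t,y)\}$, write $\int_{\Om_t}f=\int_{\bbS^n}\int_a^{r(t,y)}f\,\la^n(s)\,ds\,d\s$, differentiate under the integral, and use $d\mu_t=v\la^n\,d\s$ together with $\del_t r=v\cF$ for the graph function.) For \eqref{e1}, the basic evolution equations give $\dot g_{ij}=2\cF h_{ij}$, hence $\frac{d}{dt}\sqrt{\det g}=\tfrac12\sqrt{\det g}\,g^{ij}\dot g_{ij}=\cF H\sqrt{\det g}$; integrating over $M$ yields \eqref{e1} and also records $\frac{d}{dt}(d\mu_t)=\cF H\,d\mu_t$, which is needed below.

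For \eqref{e2} I would split $\frac{d}{dt}\int_{M_t}H\la'\,d\mu_t$ into the variation of $H$, the variation of $\la'\circ x$, and the variation of $d\mu_t$ (the last equal to $\cF H\,d\mu_t$, from \eqref{e1}). Tracing \eqref{EV-W} gives the first-variation formula $\dot H=-\D\cF-\br{\|A\|^2+\ov\Rc(\nu,\nu)}\cF$, where one uses $\bar R_{\al\be\g\de}x^{\al}_{\ ;i}\nu^{\be}\nu^{\g}x^{\de}_{\ ;k}g^{ki}=\ov\Rc(\nu,\nu)$. For the second piece, $\frac{d}{dt}(\la'\circ x)=\bar g(\bar\n\la',\dot x)=\la''\,\bar g(\del_r,\nu)\,\cF$, since $\bar\n\la'=\la''\del_r$. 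Integrating the $-\D\cF$ term against $\la'$ by parts twice on the closed manifold $M_t$ replaces it by $-\int_{M_t}\cF\,\D\la'$, and I would then invoke the Gauss-formula identity $\D\la'=\bar\D\la'-\bar\n^2\la'(\nu,\nu)-H\,\bar g(\bar\n\la',\nu)$, valid for the restriction to $M_t$ of any ambient function (it follows from $\bar\n_{e_i}e_i=\n_{e_i}e_i-h(e_i,e_i)\nu$). At this point the hypothesis enters: evaluating $\bar\D\la'\,\bar g-\bar\n^2\la'+\la'\ov\Rc=0$ on $(\nu,\nu)$ gives $\bar\D\la'-\bar\n^2\la'(\nu,\nu)=-\la'\,\ov\Rc(\nu,\nu)$, hence $\D\la'=-\la'\,\ov\Rc(\nu,\nu)-H\la''\bar g(\del_r,\nu)$. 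Assembling the three pieces, the two occurrences $\pm\la'\,\ov\Rc(\nu,\nu)\cF$ cancel, leaving $\int_{M_t}\cF\br{\la'(H^2-\|A\|^2)+2H\la''\bar g(\del_r,\nu)}d\mu_t$; since $H^2-\|A\|^2=2\sigma_2$ and $\la''\bar g(\del_r,\nu)=\bar g(\bar\n\la',\nu)$, this is exactly the right-hand side of \eqref{e2} (reading $f=\la'$).

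The only delicate points are the signs in the first-variation formula for $H$ and in the Laplacian comparison identity, both pinned down by the paper's conventions (outward $\nu$, $\bar\n_XY=\n_XY-h(X,Y)\nu$, and the curvature sign in \eqref{Ricci}). There is no genuinely hard step: the main obstacle is merely arranging the integration by parts and the Gauss-equation substitution so that the $\ov\Rc(\nu,\nu)$ contributions cancel — which is precisely the role of the staticity hypothesis — after which the identity $H^2-\|A\|^2=2\sigma_2$ closes the computation.
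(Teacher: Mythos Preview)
Your proposal is correct and follows essentially the same route as the paper's proof: both treat \eqref{e0} and \eqref{e1} as standard, and for \eqref{e2} both differentiate the three factors $H$, $\la'$, $d\mu_t$, integrate $-\la'\D\cF$ by parts to $-\cF\D\la'$, use the identity $\D\la'=\bar\D\la'-\bar\n^2\la'(\nu,\nu)-H\ip{\bar\n\la'}{\nu}$, and then invoke the static equation on $(\nu,\nu)$ so that the $\ov\Rc(\nu,\nu)$ terms cancel, leaving $H^2-\|A\|^2=2\sigma_2$. You also correctly identify that the ``$f$'' in the statement of \eqref{e2} is a typo for $\la'$.
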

\begin{proof}The first and second ones are well known and have already been used in \cref{Pf-Sphereflow-main}. We compute the third one.
\eq{
\frac{d}{dt}\int_{M_t} H\la'&= \int_{M_t} \la'(-\Delta \cF-\cF|A|^2-\cF \ov\Rc(\nu,\nu))\\
	&\hp{=}+\int_{M_t} \br{H\left<\bar{\nabla} \la',\nu\right>+H^2\la'}\cF\\
    &=\int_{M_t} -(\bar{\Delta}\la'-\bar{\nabla}^2\la'(\nu,\nu)-H\left<\bar{\nabla}\la',\nu\right>+\la'\ov\Rc(\nu,\nu)) \cF\\
    &\hp{=}+\int_{M_t} \br{H\left<\bar{\nabla} \la',\nu\right>+(H^2-|A|^2)\la'}\cF\\
    &=\int_{M_t} 2\sigma_2 \la'\cF+2H\left<\bar{\nabla} f,\nu\right>\cF.
}
\end{proof}

\begin{prop}\label{Mink-ineq-2}
Let $\Sigma\subset  N$ be a closed hypersurface. 
 If $\Sigma$ is star-shaped and $\frac{\la''}{\la}+\frac{1-\la'^2}{\la^2}\geq 0$, then
\eq{\label{Min2}
\int_\Sigma (n-1)H\la'\leq \int_\Sigma 2\sigma_2u.
}
\end{prop}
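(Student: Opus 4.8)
The plan is to derive a \emph{second} Minkowski‑type identity on $\Si$ and then read off the sign from the hypotheses. The two ingredients are the conformal Killing field $X=\la\del_r$, which by the conformality relation $\bar\n_{\bar Y}X=\la'\bar Y$ admits very simple Gauss/Weingarten decompositions along $\Si$, and the first Newton tensor $T_1$, with components $T_1^{ij}=Hg^{ij}-h^{ij}$, whose divergence is controlled by the ambient Ricci curvature via the contracted Codazzi equation. Since $\Si$ is closed we have $\int_\Si\dive_\Si\big(T_1(X^T)\big)=0$, and expanding this divergence produces exactly $(n-1)H\la'-2\sigma_2u$ plus a term built from the radial part of $\ov\Rc$.

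In detail I would first record that, writing $X=X^T+u\nu$ along $\Si$, the Gaussian formula \eqref{GF}, the Weingarten equation \eqref{Weingarten} and $\bar\n_{\bar Y}X=\la'\bar Y$ give $\n_i(X^T)_j=\la'g_{ij}-uh_{ij}$; equivalently this is \eqref{graph-h} rewritten, using $X^T=\la\n r$. Contracting with $T_1$ and using $H^2-|A|^2=2\sigma_2$ yields $T_1^{ij}\n_i(X^T)_j=(n-1)H\la'-2\sigma_2u$. Next, the contracted Codazzi equation \eqref{Codazzi}, together with the fact that $\{x_{;i}\}\cup\{\nu\}$ is a full frame along $\Si$, gives $\n_iT_1^{ij}=-\ov\Rc(\nu,\cdot)^\sharp$, i.e. $(\n_iT_1^{ij})(X^T)_j=-\ov\Rc(\nu,X^T)=-\la\ov\Rc(\nu,\n r)$. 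Substituting the warped‑product Ricci tensor \eqref{Ricci-WP}, the $\bar g$‑summand drops since $\nu\perp\n r$, and with $dr(\nu)=\bar g(\del_r,\nu)=u/\la$ and $dr(\n r)=\|\n r\|^2$ one obtains $(\n_iT_1^{ij})(X^T)_j=(n-1)\big(\tfrac{\la''}{\la}+\tfrac{1-\la'^2}{\la^2}\big)u\|\n r\|^2$. Integrating the total divergence of $T_1(X^T)$ over the closed hypersurface $\Si$ and combining these two computations gives
\[
\int_\Si\big((n-1)H\la'-2\sigma_2u\big)=-(n-1)\int_\Si\left(\frac{\la''}{\la}+\frac{1-\la'^2}{\la^2}\right)u\|\n r\|^2.
\]
Finally, star‑shapedness gives $u=\la/v>0$, the bracket is $\ge0$ by hypothesis, and $\|\n r\|^2\ge0$, so the right‑hand side is $\le0$ and \eqref{Min2} follows (with equality precisely when $r$ is constant wherever the bracket is strictly positive, i.e. $\Si$ is a slice).

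The only genuinely delicate point is sign bookkeeping: one must push the paper's curvature‑tensor convention, as fixed in \eqref{Rm} and \eqref{Ricci}, through the contracted Codazzi computation to be certain that $(\n_iT_1^{ij})(X^T)_j$ appears with the sign displayed above rather than its negative. A reassuring consistency check is that $\tfrac{\la''}{\la}+\tfrac{1-\la'^2}{\la^2}$ vanishes identically in every space form — so the identity degenerates to the classical second Minkowski formula $\int_\Si(n-1)H\la'=\int_\Si2\sigma_2u$ there — whereas in the anti‑de‑Sitter Schwarzschild metrics it equals $\tfrac{n+1}{2}\,m\,\la^{-n-1}\ge0$, which is exactly why the hypothesis of the proposition is in force in the setting of \cref{Geom-ineq}.
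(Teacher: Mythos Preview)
Your proof is correct and coincides with the paper's: the paper writes $\s_2^{ij}$ for your Newton tensor $T_1^{ij}=Hg^{ij}-h^{ij}$, contracts it with $(\la r_{;j})_{;i}=(X^T)_{j;i}=\la'g_{ij}-uh_{ij}$ (which is \eqref{graph-h} rewritten, exactly as you note), integrates by parts using $\n_i\s_2^{ij}=-\ov\Rc(\nu,x_{;m})g^{mj}$, and then inserts \eqref{Ricci-WP}. The only cosmetic difference is that you phrase the argument as ``$\int_\Si\dive_\Si(T_1(X^T))=0$'' while the paper writes the integration by parts out directly.
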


\begin{proof}


Multiplying $\s_2^{ij}$ to \eqref{graph-h}, summing over $i,j$, integrating over $\Sigma$ and using
\eq{\n_i\s_2^{ij}={h^i_{i;m}}g^{mj}-{h^{ij}}_{;i}=-\ov\Rc(\nu,x_{;m})g^{mj},}
we have
\eq{
\int_\Sigma (n-1)H\la'-2\sigma_2u&=\int_\Sigma \s_2^{ij}(\la r_{;j})_{;i}\\
    &=\int_\Sigma \la\ov\Rc(\nu, x_{;m}){r_{;}}^m\\
    &=\int_\Sigma -(n-1)\left[\frac{\la''}{\la}+\frac{1-\la'^2}{\la^2}\right]\la\|\nabla r\|^2\left<\del_r,\nu\right>\\
    &\leq 0,
}
where we used the starshapedness and \eqref{Ricci-WP}.
\end{proof}

Now we choose the flow as
\begin{eqnarray}\label{flow2}
\frac{d}{dt}x=\left(\frac{n}{H}-\frac{u}{\la'}\right)\nu.
\end{eqnarray}
Along this flow the area $|M_t|$ is non-decreasing and the quantity
\eq{\int_{M_t} H\la'd\mu_t-2n\int_{\Omega_t}\frac{\la'\la''}{\la} dN}
is non-increasing.

\begin{prop}\label{monot} Under the assumptions of \cref{Geom-ineq},
let $M_t\subset N$ be a family of closed star-shaped hypersurfaces evolving by \eqref{flow2}. Then
\begin{eqnarray}\label{WeigV}
\frac{d}{dt}\int_{\Omega_t} \la' dN\geq 0, 
\end{eqnarray}
\begin{eqnarray}\label{Area}
\frac{d}{dt}|M_t|\geq 0
\end{eqnarray}
and
\begin{eqnarray}\label{HV}
\frac{d}{dt}\left(\int_{M_t} H\la'd\mu_t-2n\int_{\Omega_t}\frac{\la'\la''}{\la}dN \right)\leq 0.
\end{eqnarray}
\end{prop}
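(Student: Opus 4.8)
The plan is to verify the three monotonicity assertions of \cref{monot} by feeding the special flow speed $\cF=\frac nH-\frac u{\la'}$ into the variational formulas of \cref{evovl-integ-quant} and controlling the sign of each integrand. First I would prove \eqref{WeigV}: by \eqref{e0} with $f=\la'$ we have
\eq{\frac{d}{dt}\int_{\Om_t}\la'dN=\int_{M_t}\la'\br{\frac nH-\frac u{\la'}}d\mu_t=\int_{M_t}\br{\frac{n\la'}H-u}d\mu_t,}
and since the ambient spaces in \cref{Geom-ineq} are static and satisfy the hypotheses of Brendle's Heintze--Karcher inequality \cite[equ.~(4)]{Brendle:06/2013} (see also the argument already used in \cref{Pf-Sphereflow-main}), the right hand side is nonnegative. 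This is exactly the $F=H$ instance of the computation in the proof of \cref{Sphereflow-main}.

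Next I would prove \eqref{Area}: by \eqref{e1},
\eq{\frac{d}{dt}|M_t|=\int_{M_t}H\br{\frac nH-\frac u{\la'}}d\mu_t=\int_{M_t}\br{n-\frac{Hu}{\la'}}d\mu_t=\int_{M_t}\frac{\dive(\la\n r)}{\la'}d\mu_t,}
using the identity $\dive(\la\n r)=n\la'-Hu$ that already appeared in \eqref{Mon-area}. Integrating by parts on the closed manifold $M_t$ moves the divergence onto $1/\la'$, producing $\int_{M_t}\la\n r\cdot\n(\la'^{-1})=\int_{M_t}\frac{\la''\la}{\la'^2}\|\n r\|^2$, which is nonnegative because $\la''\geq 0$ in these ambient spaces. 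This is the same manipulation as in \eqref{Mon-area}, now with $F=H$ so that no inequality is wasted.

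Finally, and this is where the real work sits, I would prove \eqref{HV}. Combining \eqref{e2} (valid since these spaces are static, $\bar\D\la'\bar g-\bar\n^2\la'+\la'\ov\Rc=0$) with $f=\la'$ and with \eqref{e0} applied to the weighted volume term, we get
\eq{\frac{d}{dt}\br{\int_{M_t}H\la'd\mu_t-2n\int_{\Om_t}\frac{\la'\la''}\la dN}=\int_{M_t}\br{2\s_2\la'+2H\ip{\bar\n\la'}\nu}\cF-2n\int_{M_t}\frac{\la'\la''}\la\cF.}
Since $\bar\n\la'=\la''\del_r$ and $\ip{\del_r}\nu=u/\la$, the middle term is $2H\frac{\la''u}\la$, and using $\dive(\la\n r)=n\la'-Hu$ to write $Hu=n\la'-\dive(\la\n r)$ I would rewrite the whole integrand and integrate the divergence term by parts against $\cF$. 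The goal is to bring the bracket into the form of a negative multiple of the Minkowski-type quantity controlled by \cref{Mink-ineq-2}, namely $\int_\Sigma\br{(n-1)H\la'-2\s_2 u}\le 0$ (whose hypothesis $\frac{\la''}\la+\frac{1-\la'^2}{\la^2}\ge 0$ must be checked for the anti-de Sitter Schwarzschild and hyperbolic warping factors from $\la'^2=1+\la^2-m\la^{1-n}$, where in fact $\frac{\la''}\la+\frac{1-\la'^2}{\la^2}=\tfrac12(n+1)m\la^{-1-n}\ge0$ after differentiating the defining relation). The main obstacle is the bookkeeping in this last step: one must juggle $\sigma_2$, the support function $u$, the radial derivative terms, and the integration by parts so that everything collapses onto the single inequality of \cref{Mink-ineq-2} together with $\la''\ge 0$ and starshapedness; getting the coefficients to match exactly is the delicate part, and I expect the choice $\cF=\frac nH-\frac u{\la'}$ (rather than a general $F$) to be precisely what makes the cancellations work.
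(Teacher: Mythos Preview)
Your arguments for \eqref{WeigV} and \eqref{Area} are correct and match the paper exactly.

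For \eqref{HV} your setup is right up to the point where you obtain
\[
\int_{M_t}\Bigl(2\s_2\la'+2H\tfrac{\la'' u}{\la}-2n\tfrac{\la'\la''}{\la}\Bigr)\cF,
\]
and your verification of the hypothesis of \cref{Mink-ineq-2} is fine. But the proposed next move---rewrite $Hu=n\la'-\dive(\la\n r)$ and integrate the divergence against $\cF$---is a detour that does not close. Integration by parts throws derivatives onto $\cF=\tfrac nH-\tfrac u{\la'}$ and onto $\la''/\la$, and there is no mechanism to absorb those; you never get back to the quantity $(n-1)H\la'-2\s_2 u$ of \cref{Mink-ineq-2} with the right sign.

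The paper does \emph{not} integrate by parts here. Instead it makes two algebraic observations. First, the $\la''$ terms combine pointwise into a negative perfect square:
\[
\Bigl(2H\tfrac{\la'' u}{\la}-2n\tfrac{\la'\la''}{\la}\Bigr)\cF
=\tfrac{\la''}{\la}(2Hu-2n\la')\cF
=-2H\tfrac{\la'\la''}{\la}\Bigl(\tfrac nH-\tfrac u{\la'}\Bigr)^2\le 0,
\]
using $\la''\ge 0$ and $H>0$. Second, and this is the step missing from your sketch, the remaining $\s_2$ term is split as
\[
2\s_2\la'\Bigl(\tfrac nH-\tfrac u{\la'}\Bigr)=\tfrac{2n\s_2}{H}\la'-2\s_2 u\le (n-1)H\la'-2\s_2 u,
\]
where the inequality is the Newton--MacLaurin inequality $\s_2\le\tfrac{n-1}{2n}H^2$. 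Only after this do you land exactly on the integrand of \cref{Mink-ineq-2}, and \eqref{Min2} finishes the proof. So the missing ingredient is Newton--MacLaurin, not another integration by parts.
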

\begin{proof}
We first note that all the assumptions in \cref{evovl-integ-quant} and \cref{Mink-ineq-2} are satisfied by the anti-de-Sitter Schwarzschild space and the hyperbolic space. Also  the Heintze-Karcher type inequality holds for the anti-de-Sitter Schwarzschild space and the hyperbolic space. Thus inequality \eqref{WeigV} is proved in the same way as  the proof of \cref{Sphereflow-main} in \cref{Pf-Sphereflow-main}. Inequality \eqref{Area} was proved in the proof of \cref{General-flow-main} in \cref{Pf-Sphereflow-main}. Next we show \eqref{HV}. From \eqref{e2} and \eqref{e0}, we have
\eq{\label{monot-1}
&\frac{d}{dt}\left(\int_{M_t} H\la'-2n\int_{\Omega_t}\frac{\la'\la''}{\la} \right)\\
=~&\int_{M_t} \left(2\sigma_2 \la'+2H\left<\bar{\nabla}\la',\nu\right>-2n\frac{\la'\la''}{\la}  \right)\left(\frac{n}{H}-\frac{u}{\la'}\right)\\
=~&\int_{M_t} 2\sigma_2 \la'\left(\frac{n}{H}-\frac{u}{\la'}\right)+\int_{M_t} \frac{\la''}{\la} \left(2Hu-2n\la'\right)\left(\frac{n}{H}-\frac{u}{\la'}\right)\\
\leq~& \int_{M_t} \left((n-1)H \la'-2\sigma_2u\right)-\int_{M_t} 2H\frac{\la'\la''}{\la}\left(\frac{n}{H}-\frac{u}{\la'}\right)^2
\\
\leq~& 0.}
In the second equality, we used $\left<\bar{\nabla} \la',\nu\right>=\frac{\la''}{\la}u$ and in the last two inequalities we used Newton-Maclaurin inequality, \eqref{Min2} and $\la''\ge 0$.
\end{proof}

The inequalities in \cref{Geom-ineq} follows immediately from the monotonicity in Proposition \ref{monot} and the convergence result of the flow. The classification of the equality case follows easily by checking the equality in \eqref{monot-1}. \qed

\medskip

\noindent{\bf Acknowledgements.} Part of this work was done while CX was visiting the mathematical institute of Albert-Ludwigs-Universit\"at Freiburg. He would like to thank the institute for its hospitality. The authors are grateful to Professor Pengfei Guan for his comments and for sending us their private note. The authors would also like to thank Professor Guofang Wang for bringing the paper \cite{GeWangWu:10/2015} to their attention. Research of CX is  supported in part by NSFC (Grant No.~11501480) and  the Natural Science Foundation of Fujian Province of China (Grant No.~2017J06003). 

\medskip

\bibliographystyle{amsplain}
\bibliography{Bibliography.bib}

\providecommand{\bysame}{\leavevmode\hbox to3em{\hrulefill}\thinspace}
\providecommand{\MR}{\relax\ifhmode\unskip\space\fi MR }
\providecommand{\MRhref}[2]{%
  \href{http://www.ams.org/mathscinet-getitem?mr=#1}{#2}
}
\providecommand{\href}[2]{#2}
\begin{thebibliography}{10}

\bibitem{AlikakosFreire:/2003}
Nicholas Alikakos and Alexandre Freire, \emph{The normalized mean curvature
  flow for a small bubble in a {R}iemannian manifold}, J. Differ. Geom.
  \textbf{64} (2003), no.~2, 247--303.

\bibitem{Andrews:/1994b}
Ben Andrews, \emph{Contraction of convex hypersurfaces in {E}uclidean space},
  Calc. Var. Partial Differ. Equ. \textbf{2} (1994), no.~2, 151--171.

\bibitem{Andrews:/1994a}
\bysame, \emph{Contraction of convex hypersurfaces in {R}iemannian spaces}, J.
  Differ. Geom. \textbf{39} (1994), no.~2, 407--431.

\bibitem{Andrews:/2007}
\bysame, \emph{Pinching estimates and motion of hypersurfaces by curvature
  functions}, J. Reine Angew. Math. \textbf{608} (2007), 17--33.

\bibitem{Brakke:/1978}
Kenneth Brakke, \emph{The motion of a surface by its mean curvature},
  Mathematical Notes, Princeton University Press, Princeton, NJ, 1978.

\bibitem{Brendle:06/2013}
Simon Brendle, \emph{Constant mean curvature surfaces in warped product
  manifolds}, Publ. Math. de l'IHES \textbf{117} (2013), no.~1, 247--269.

\bibitem{BrendleGuanLi:/}
Simon Brendle, Pengfei Guan, and Junfang Li, \emph{An inverse curvature type
  hypersurface flow in $\mathbb{H}^{n+1}$}, private note.

\bibitem{BrendleHungWang:01/2016}
Simon Brendle, Pei-Ken Hung, and Mu~Tao Wang, \emph{A {M}inkowski inequality
  for hypersurfaces in the anti-de {S}itter-{S}chwarzschild manifold}, Commun.
  Pure Appl. Math. \textbf{69} (2016), no.~1, 124--144.

\bibitem{Cabezas-RivasMiquel:/2007}
Esther Cabezas-Rivas and Vicente Miquel, \emph{Volume preserving mean curvature
  flow in the hyperbolic space}, Indiana Univ. Math. J. \textbf{56} (2007),
  no.~5, 2061--2086.

\bibitem{Chow:/1985}
Bennett Chow, \emph{Deforming convex hypersurfaces by the $n$-th root of the
  {G}aussian curvature}, J. Differ. Geom. \textbf{22} (1985), no.~1, 117--138.

\bibitem{Chow:/1987}
\bysame, \emph{Deforming convex hypersurfaces by the square root of the scalar
  curvature}, Invent. Math. \textbf{87} (1987), no.~1, 63--82.

\bibitem{De-LimaGirao:04/2016}
Levi~Lopes De~Lima and Frederico Girao, \emph{An {A}lexandrov-{F}enchel-type
  inequality in hyperbolic space with an application to a {P}enrose
  inequality}, Ann. Henri Poincar\'e \textbf{17} (2016), no.~4, 979--1002.

\bibitem{EckerHuisken:02/1989}
Klaus Ecker and Gerhard Huisken, \emph{Immersed hypersurfaces with constant
  {W}eingarten curvature}, Math. Ann. \textbf{283} (1989), no.~2, 329--332.

\bibitem{Enz:10/2008}
Christian Enz, \emph{The scalar curvature flow in {L}orentzian manifolds}, Adv.
  Calc. Var. \textbf{1} (2008), no.~3, 323--343.

\bibitem{GeWangWu:04/2014}
Yuxin Ge, Guofang Wang, and Jie Wu, \emph{Hyperbolic {A}lexandrov-{F}enchel
  quermassintegral inequalities 2}, J. Differ. Geom. \textbf{98} (2014), no.~2,
  237--260.

\bibitem{GeWangWu:10/2015}
\bysame, \emph{The {GBC} mass for asymptotically hyperbolic manifolds}, Math.
  Z. \textbf{281} (2015), no.~1--2, 257--297.

\bibitem{Gerhardt:/1990}
Claus Gerhardt, \emph{Flow of nonconvex hypersurfaces into spheres}, J. Differ.
  Geom. \textbf{32} (1990), no.~1, 299--314.

\bibitem{Gerhardt:01/1996}
\bysame, \emph{Closed {W}eingarten hypersurfaces in space forms}, Geometric
  analysis and the calculus of variations (J{\"u}rgen Jost, ed.), International
  Press of Boston Inc., 1996, pp.~71--98.

\bibitem{Gerhardt:/2003}
\bysame, \emph{Hypersurfaces of prescribed scalar curvature in {L}orentzian
  manifolds}, J. Reine Angew. Math. \textbf{554} (2003), 157--199.

\bibitem{Gerhardt:/2006}
\bysame, \emph{Curvature problems}, Series in Geometry and Topology, vol.~39,
  International Press of Boston Inc., Sommerville, 2006.

\bibitem{Gerhardt:11/2011}
\bysame, \emph{Inverse curvature flows in hyperbolic space}, J. Differ. Geom.
  \textbf{89} (2011), no.~3, 487--527.

\bibitem{Gerhardt:01/2014}
\bysame, \emph{Non-scale-invariant inverse curvature flows in {E}uclidean
  space}, Calc. Var. Partial Differ. Equ. \textbf{49} (2014), no.~1-2,
  471--489.

\bibitem{Gerhardt:/2015}
\bysame, \emph{Curvature flows in the sphere}, J. Differ. Geom. \textbf{100}
  (2015), no.~2, 301--347.

\bibitem{GiraoPinheiro:09/2016}
Frederico Girao and Neilha Pinheiro, \emph{An {A}lexandrov-{F}enchel-type
  inequality for hypersurfaces in the sphere}, Ann. Glob. Anal. Geom. (2017),
  {\href{https://link.springer.com/article/10.1007/s10455-017-9562-4}{doi:10.1007/s10455-017-9562-4}}.

\bibitem{GuanLi:08/2009}
Pengfei Guan and Junfang Li, \emph{The quermassintegral inequalities for
  k-convex starshaped domains}, Adv. Math. \textbf{221} (2009), no.~5,
  1725--1732.

\bibitem{GuanLi:/2015}
\bysame, \emph{A mean curvature type flow in space forms}, Intern. Math. Res.
  Not. \textbf{2015} (2015), no.~13, 4716--4740.

\bibitem{GuanLi:/2017}
\bysame, \emph{A fully-nonlinear flow and quermassintegral inequalities (in
  chinese)}, Sci. Sin. Math. (2017),
  {\href{http://engine.scichina.com/publisher/scp/journal/SSM/doi/10.1360/N012017-00009?slug=abstract}{doi:10.1360/N012017-00009}}.

\bibitem{GuanLiWang:09/2016}
Pengfei Guan, Junfang Li, and Mu~Tao Wang, \emph{A volume preserving flow and
  the isoperimetric problem in warped product spaces}, preprint,
  {\href{https://arxiv.org/abs/1609.08238}{arxiv:1609.08238}}, 2016.

\bibitem{Huisken:/1984}
Gerhard Huisken, \emph{Flow by mean curvature of convex surfaces into spheres},
  J. Differ. Geom. \textbf{20} (1984), no.~1, 237--266.

\bibitem{Huisken:/1986}
\bysame, \emph{Contracting convex hypersurfaces in {R}iemannian manifolds by
  their mean curvature}, Invent. Math. \textbf{84} (1986), no.~3, 463--480.

\bibitem{Huisken:/1987}
\bysame, \emph{The volume preserving mean curvature flow}, J. Reine Angew.
  Math. \textbf{382} (1987), 35--48.

\bibitem{HuiskenIlmanen:/2001}
Gerhard Huisken and Tom Ilmanen, \emph{The inverse mean curvature flow and the
  {R}iemannian {P}enrose inequality}, J. Differ. Geom. \textbf{59} (2001),
  no.~3, 353--437.

\bibitem{HuiskenSinestrari:09/1999}
Gerhard Huisken and Carlo Sinestrari, \emph{Convexity estimates for mean
  curvature flow and singularities of mean convex surfaces}, Acta Math.
  \textbf{183} (1999), no.~1, 45--70.

\bibitem{HuiskenSinestrari:/2009}
\bysame, \emph{Mean curvature flow with surgeries of two-convex hypersurfaces},
  Invent. Math. \textbf{175} (2009), 137--221.

\bibitem{Krylov:/1987}
Nicolai Krylov, \emph{Nonlinear elliptic and parabolic equations of the second
  order}, Mathematics and its applications, vol.~7, Springer, 1987.

\bibitem{MakowskiScheuer:11/2016}
Matthias Makowski and Julian Scheuer, \emph{Rigidity results, inverse curvature
  flows and {A}lexandrov-{F}enchel type inequalities in the sphere}, Asian J.
  Math. \textbf{20} (2016), no.~5, 869--892.

\bibitem{McCoy:/2003}
James McCoy, \emph{The surface area preserving mean curvature flow}, Asian J.
  Math. \textbf{7} (2003), no.~1, 7--30.

\bibitem{McCoy:/2005}
James~A. McCoy, \emph{Mixed volume preserving curvature flows}, Calc. Var.
  Partial Differ. Equ. \textbf{24} (2005), no.~2, 131--154.

\bibitem{Scheuer:05/2015}
Julian Scheuer, \emph{Non-scale-invariant inverse curvature flows in hyperbolic
  space}, Calc. Var. Partial Differ. Equ. \textbf{53} (2015), no.~1, 91--123.

\bibitem{Scheuer:01/2017}
\bysame, \emph{The inverse mean curvature flow in warped cylinders of
  non-positive radial curvature}, Adv. Math. \textbf{306} (2017), 1130--1163.

\bibitem{Scheuer:03/2017}
\bysame, \emph{Isotropic functions revisited}, preprint,
  {\href{https://arxiv.org/abs/1703.03321}{arxiv:1703.03321}}, 2017.

\bibitem{Urbas:/1990}
John Urbas, \emph{On the expansion of starshaped hypersurfaces by symmetric
  functions of their principal curvatures}, Math. Z. \textbf{205} (1990),
  no.~1, 355--372.

\bibitem{Urbas:/1991}
\bysame, \emph{An expansion of convex hypersurfaces}, J. Differ. Geom.
  \textbf{33} (1991), no.~1, 91--125.

\bibitem{WangXia:07/2014}
Guofang Wang and Chao Xia, \emph{Isoperimetric type problems and
  {A}lexandrov-fenchel type inequalities in the hyperbolic space}, Adv. Math.
  \textbf{259} (2014), 532--556.

\bibitem{WeiXiong:/2015}
Yong Wei and Changwei Xiong, \emph{Inequalities of {A}lexandrov-{F}enchel type
  for convex hypersurfaces in hyperbolic space and in the sphere}, Pac. J.
  Math. \textbf{277} (2015), no.~1, 219--239.

\bibitem{Xia:/2016a}
Chao Xia, \emph{A {M}inkowski type inequality in space forms}, Calc. Var.
  Partial Differ. Equ. \textbf{55} (2016), no.~4, 96.

\end{thebibliography}

\end{document}